\title{McShane-type identities for quasifuchsian representations of nonorientable surfaces}
\author{\textsc{Yi Huang}\\
\textit{Yau Mathematical Sciences Center}\\ 
\textit{Tsinghua University}\\
\href{mailto:yihuangmath@tsinghua.edu.cn}{\texttt{yihuangmath@tsinghua.edu.cn}}\\
}
\date{}
\theoremstyle{plain}
\newtheorem{thm}{Theorem}
\newtheorem*{thm*}{Theorem}
\newtheorem*{repeatthm1}{Proposition~\ref{thm:double}}
\newtheorem*{repeatthm1'}{Proposition~\ref{thm:tqfcharvar}}
\newtheorem*{repeatthm2'}{Theorem~\ref{thm:mcshane}}
\newtheorem{prop}[thm]{Proposition}
\newtheorem{lem}[thm]{Lemma}
\newtheorem{cor}[thm]{Corollary}
\newtheorem{note}{Note}
\theoremstyle{dfn}
\newtheorem{dfn}{Definition}
\theoremstyle{remark}
\newcommand{\abs}[1]{\left\lvert#1\right\rvert}
\newcommand{\norm}[1]{\left\lVert#1\right\rVert}
\begin{document}

\maketitle

\begin{abstract}
We show that Norbury's McShane identity for nonorientable cusped hyperbolic surfaces $N$ generalizes to quasifuchsian representations of $\pi_1(N)$ as well as pseudo-Anosov mapping Klein bottles with singular fibers given by $N$.
\end{abstract}

\section{Introduction}

The influence of Teichm\"uller theory permeates through moduli space theory, complex analysis, complex dynamics, low-dimensional geometry and topology to representation theory, and lies at the confluence of much of modern mathematics. In representation theory, the Teichm\"uller space $\mathcal{T}(S)$ of a finite-area hyperbolic surface $S$ manifests as the character variety for discrete faithful (i.e.: Fuchsian) representations from the surface group $\pi_1(S)$ to $\mathrm{PSL}(2,\mathbb{R})$. Yet another representation theoretic avatar of Teichm\"uller space arises when describing the character variety $\mathcal{QF}(S)$ of characters for quasifuchsian representations of $\pi_1(S)$ into $\mathrm{PSL}(2,\mathbb{C})$. Specifically, Bers's simultaneous uniformization theorem says:
\begin{thm*}[Corollary to Bers's simultaneous uniformization \cite{bersuniform}]
The space $\mathcal{QF}(S)$ of quasifuchsian representations for an oriented surface $S$ is biholomorphic to $\mathcal{T}(S\cup\bar{S})$.
\end{thm*}
\noindent
Here, the space $\mathcal{QF}(S)$ is rendered a complex manifold when regarded as an open subset contained in the  character variety of representations from $\pi_1(S)$ to $\mathrm{PSL}(2,\mathbb{C})$. In comparison, the space $\mathcal{T}(S\cup\bar{S})$ is given the standard complex structure on \cite{MR0124486}, or equivalently, the complex structure from Bers' embedding of Teichm\"uller space as an open domain in the complex vector space of holomorphic quadratic differentials on $S$\cite{MR0111835,MR0130972}.\medskip

Akiyoshi-Miyachi-Sakuma\cite{amsgroups} take advantage of this complex structure and invoke the identity theorem for holomorphic functions to show that McShane's identities\cite{mcshane_thesis, mcshane_allcusps} for cusped hyperbolic surfaces extend to the space of quasifuchsian representations. Given a finite-volume cusped (possibly nonorientable) hyperbolic surface $F$ with a distinguished cusp $p$, let $\mathcal{S}(F)=\mathcal{S}_1(F)\cup\mathcal{S}_2(F)$ denote the union of the following (possibly empty) sets:
\begin{itemize}
\item
let $\mathcal{S}_1(F)$ be the collection of embedded geodesic-bordered (open) $1$-holed  M\"obius bands on $F$ which contain cusp~$p$. We denote an arbitrary $1$-holed M\"obius band $M$ by the unordered pair $\{\alpha_1,\beta_1\}$ of simple closed 1-sided geodesics contained in $M$; and
\item
let $\mathcal{S}_2(F)$ be the collection of embedded geodesic-bordered (open) pairs of pants on $F$ which contain cusp~$p$. We denote an arbitrary pair of pants $P$ in $\mathcal{S}_2(F)$ by the unordered pair $\{\alpha_2,\beta_2\}$ of simple closed 2-sided geodesics on $F$, which, together with cusp~$p$, bound $P$.
\end{itemize}

\begin{note}
We regard cusps as 2-sided geodesics of length $0$, and thus allow $\alpha_2$ or $\beta_2$ to be cusps. And in the special case when $F$ is a 1-cusped torus $S_{1,1}$, the boundary geodesics $\alpha_2$ and $\beta_2$ are both the same curve. 
\end{note}

\begin{note}
\label{note:nonprimitive}
We regard pairs of pants embedded within $1$-holed M\"obius bands $M\in\mathcal{S}_1(F)$ as elements of $\mathcal{S}_2(F)$. In particular, each embedded $1$-holed M\"obius band contains precisely two embedded pairs of pants $P_\alpha,P_\beta\in\mathcal{S}_2(F)$ respectively obtained by cutting $M$ along $\alpha_1$ and $\beta_1$ (see Figure~\ref{fig:mobpants}). To clarify, the pair of pants $P_\alpha$ in Figure~\ref{fig:mobpants} corresponds to $\{\alpha_1^2,\gamma\}$ and the pair of pants $P_\beta$ corresponds to $\{\beta_1^2,\gamma\}$. The choice to use the non-primitive simple closed geodesics $\alpha_1^2$ and $\beta_1^2$ are to uniformize our McShane identities summands, but this is the only context in which our simple closed geodesics are permitted to be non-primitive.
\end{note}

\begin{figure}[h!]
\centering
\includegraphics[width=0.75\textwidth]{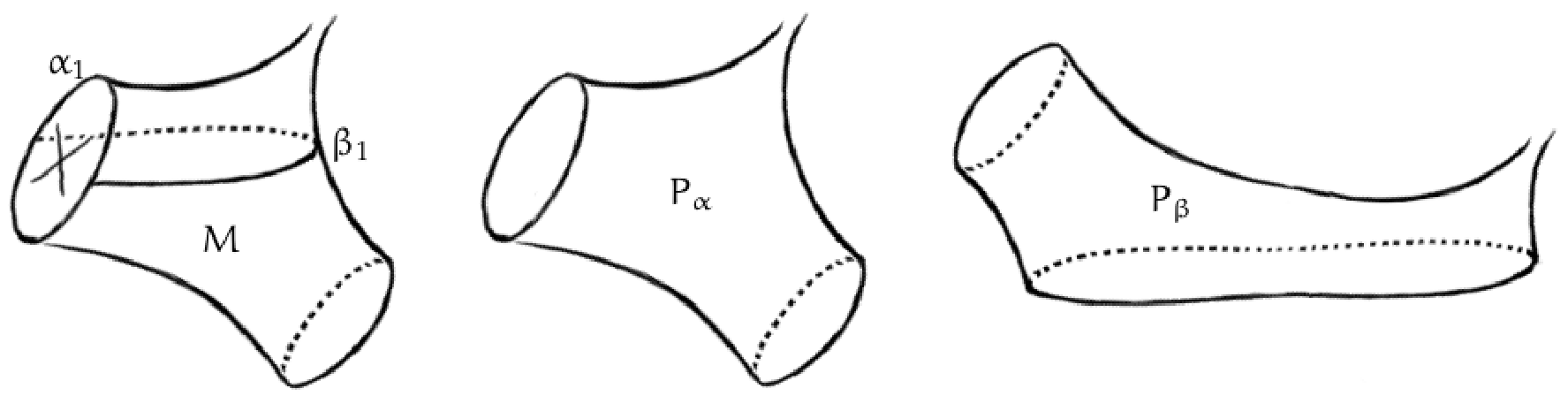}
\caption{(left to right) a $1$-holed M\"obius band $M$; a pair of pants $P_\alpha\subset M$; the other pair of pants $P_\beta\subset M$.}
\label{fig:mobpants}
\end{figure}

\begin{thm*}[Orientable quasifuchsian identity\cite{amsgroups}]
Consider an orientable cusped hyperbolic surface $S$ with a distinguished cusp $p$. For any $\rho\in\mathcal{QF}(S)$, we have the following absolutely convergent series
\begin{align*}
\sum_{\{\alpha_2,\beta_2\}\in\mathcal{S}_2(S)}
\left(e^{\frac{1}{2}(\ell_{\alpha_2}(\rho)+\ell_{\beta_2}(\rho))}+1\right)^{-1}=\frac{1}{2},
\end{align*}  
where $\ell_\alpha(\rho)$ here denotes the complex length (see \S\ref{sec:hyperbolic}) of $\alpha$ taken with respect to $\rho$.
\end{thm*}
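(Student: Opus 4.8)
The plan is to deduce the quasifuchsian identity from the Fuchsian (hyperbolic) McShane identity by an analytic continuation argument, exactly in the spirit of Akiyoshi--Miyachi--Sakuma. First I would recall that for $\rho$ a Fuchsian (marked cusped hyperbolic) representation of $S$, McShane's identity gives $\sum_{\{\alpha_2,\beta_2\}\in\mathcal{S}_2(S)}\bigl(e^{\frac12(\ell_{\alpha_2}(\rho)+\ell_{\beta_2}(\rho))}+1\bigr)^{-1}=\tfrac12$; in fact the summands can be rewritten as the more familiar $\mathrm{Gap}(\rho)$-type horocyclic gap functions $h\bigl(\ell_{\alpha_2}(\rho),\ell_{\beta_2}(\rho)\bigr)$ where $h(x,y)=\bigl(1+e^{(x+y)/2}\bigr)^{-1}$, and the Fuchsian locus sits inside $\mathcal{QF}(S)$ as a real-analytic (totally real) submanifold which is a uniqueness set for holomorphic functions on the connected complex manifold $\mathcal{QF}(S)=\mathcal{T}(S\cup\bar S)$.

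Next I would set up the two holomorphic objects to be compared on $\mathcal{QF}(S)$. On one side is the constant function $\tfrac12$. On the other is the series $\Sigma(\rho):=\sum_{\{\alpha_2,\beta_2\}\in\mathcal{S}_2(S)}\bigl(e^{\frac12(\ell_{\alpha_2}(\rho)+\ell_{\beta_2}(\rho))}+1\bigr)^{-1}$, where $\ell_\gamma(\rho)$ is the complex translation length, a holomorphic function of $\rho$ (with the usual convention that a cusp contributes $0$, and with the half-length branch chosen so that $e^{\ell_\gamma(\rho)/2}=$ the derivative of $\rho(\gamma)$ at its attracting fixed point, which is holomorphic). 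The crucial analytic input is \emph{locally uniform absolute convergence}: I would show that on a neighbourhood of any $\rho_0\in\mathcal{QF}(S)$ one has a uniform bound $\bigl|\bigl(e^{\frac12(\ell_{\alpha_2}+\ell_{\beta_2})}+1\bigr)^{-1}\bigr|\le C\,e^{-\frac12\mathrm{Re}(\ell_{\alpha_2}+\ell_{\beta_2})}$, and that the real parts $\mathrm{Re}\,\ell_\gamma$ comparing to the hyperbolic lengths of the corresponding geodesics on the quasifuchsian quotient $3$-manifold (or on either conformal boundary component) grow fast enough that $\sum e^{-\frac12\mathrm{Re}(\ell_{\alpha_2}+\ell_{\beta_2})}$ converges uniformly — this is a standard consequence of the quasi-isometry between the Fuchsian and quasifuchsian group actions on $\mathbb{H}^2$ versus $\mathbb{H}^3$, bounding quasifuchsian complex lengths below by a definite multiple of Fuchsian lengths on a compact parameter set, together with the known (McShane/Mirzakhani) convergence of $\sum e^{-\frac12(\ell_{\alpha_2}+\ell_{\beta_2})}$ on the Fuchsian side. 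By Weierstrass' theorem $\Sigma$ is then holomorphic on all of $\mathcal{QF}(S)$.

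Finally I would invoke the identity theorem: $\Sigma$ and the constant $\tfrac12$ are two holomorphic functions on the connected complex manifold $\mathcal{QF}(S)$ that agree on the totally real Fuchsian submanifold $\mathcal{T}(S)$, which is a set of uniqueness (one can reduce to one complex variable along holomorphic discs transverse to $\mathcal{T}(S)$, or cite the AMS-type lemma that a holomorphic function vanishing on $\mathcal{T}(S)\subset\mathcal{QF}(S)$ vanishes identically); hence $\Sigma\equiv\tfrac12$ on $\mathcal{QF}(S)$, and since each summand is already established to be absolutely summable the stated absolutely convergent series identity holds for every $\rho\in\mathcal{QF}(S)$. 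The main obstacle I anticipate is the convergence/uniform-bound step: I must control the \emph{complex} lengths $\ell_\gamma(\rho)$ uniformly over (compact subsets of) $\mathcal{QF}(S)$ for the infinitely many curves in $\mathcal{S}_2(S)$, ensuring both that the denominators $e^{\frac12(\ell_{\alpha_2}+\ell_{\beta_2})}+1$ stay away from $0$ and that $\mathrm{Re}(\ell_{\alpha_2}+\ell_{\beta_2})\to\infty$ at the rate needed for summability; everything after that is a formal application of the identity principle and of the Fuchsian case, which I take as known.
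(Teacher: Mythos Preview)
Your proposal is correct and follows essentially the same route as the paper. The paper attributes this orientable statement to Akiyoshi--Miyachi--Sakuma and does not reprove it, but its proof of the nonorientable analogue (Proposition~\ref{thm:holomorphicity} and the ensuing identity-theorem argument) is exactly the template you describe: bound each summand by $Ce^{-\frac{c}{2k}\|\{\gamma_1,\gamma_2\}\|}$ via the compact-set length comparison \cite[Lemma~5.2]{amsgroups}, use the Birman--Series polynomial growth of simple curves to get locally uniform absolute convergence, and then apply the identity theorem on the Fuchsian locus as a maximal totally real submanifold.
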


\begin{note}
In the special case that $S$ is a 1-cusped torus, the above result is first given in Bowditch\cite{markofftriples}. His strategy of proof is wholly algebraic, employs trace-based cluster algebraic structures corresponding to complexifications of Penner's $\lambda$-lengths\cite{penner} and precedes Akiyoshi-Miyachi-Sakuma's complex analytical approach.
\end{note}

Quasifuchsian surface groups occupy a dense open subset of the set of all Kleinian surface groups\cite{namazisouto,ohshikadensity}. Correspondingly, on the $\mathrm{PSL}(2,\mathbb{C})$ character variety for $\pi_1(S)$, quasifuchsian representations continuously interpolate between holonomy representations for hyperbolic surfaces (i.e.: Fuchsian representations) and restrictions to $\pi_1(S)$ of the holonomy representation for complete finite-volume hyperbolic 3-manifolds. This latter collection of representations constitute boundary points of $\mathcal{QF}(S)$, and parameterize objects such as pseudo-Anosov mapping tori. Bowditch\cite{bowvar} studies this interpolation so as to obtain a McShane-type identity for pseudo-Anosov mapping tori with once-punctured torus fibers, and describes the cusp geometry of these mapping tori in terms of summands of the identity. Akiyoshi-Miyachi-Sakuma generalize Bowditch's work for pseudo-Anosov mapping tori with surface fibers of general type.\medskip

\begin{thm*}[Identity for pseudo-Anosov mapping tori\cite{amsbundles,amsgroups,bowvar}]
Given a pseudo-Anosov map $\varphi:S\to S$, the pseudo-Anosov mapping torus $M_\varphi:=(S\times[0,1])/(x,0)\sim(\varphi(x),1)$ is  which is topologically a fibre-bundle over $\mathbb{S}^1$ with fiber $S$.
Thurston shows that $M_\varphi$ is always a hyperbolic $3$-manifold \cite{thurston}. Denote its holonomy representation by $\phi:\pi_1(M_\varphi)\rightarrow\mathrm{PSL}(2,\mathbb{C})$, and let $\mathcal{S}_\varphi$ denote the collection of unordered pairs $\{\alpha_2,\beta_2\}$ of simple closed geodesics in $M_\varphi$ homotopic to an unordered pair of simple closed geodesics in $\mathcal{S}_2(S)$, then the following series converges absolutely:
\begin{align*}
\sum_{\{\alpha_2,\beta_2\}\in\mathcal{S}_\varphi}
\left(e^{\frac{1}{2}(\ell_{\alpha_2}(\phi)+\ell_{\beta_2}(\phi))}+1\right)^{-1}=0.
\end{align*}  
\end{thm*}

We have, so far, considered the scenario when our reference surface $S$ is orientable. In\cite{norbury}, Norbury considers Fuchsian holonomy representations for nonorientable hyperbolic surfaces via the fact that the group $\mathrm{PGL}(2,\mathbb{R})$ is a double cover of $\mathrm{PSL}(2,\mathbb{R})$ and acts as the group of (potentially orientation-reversing) isometries on $\mathbb{H}^2$ (see, e.g. \S3 of\cite{norbury}). Let $N$ denote a nonorientable hyperbolic surface and $dN$ an orientable double cover, we observe that the fundamental group $\pi_1(dN)$ of the oriented double cover $dN$ of $N$ may be regarded an index $2$ (normal) subgroup of $\pi_1(N)$.

\begin{dfn}[Fuchsian for nonorientable surfaces]
We say that a discrete faithful representation $\rho:\pi_1(N)\rightarrow\mathrm{PGL}(2,\mathbb{R})$ is \emph{Fuchsian} iff. $\rho$ is discrete, faithful and 
\[
\rho(\pi_1(N)-\pi_1(dN))\subset\mathrm{PGL}(2,\mathbb{R})-\mathrm{PSL}(2,\mathbb{R}).
\]
Since $\rho$ is a representation and $\pi_1(dN)$ is an index two subgroup of $\pi_1(N)$, the final condition here is equivalent to requiring that a single homotopy class in $\pi_1(N)-\pi_1(dN)$ be sent to $\mathrm{PGL}(2,\mathbb{R})-\mathrm{PSL}(2,\mathbb{R})$.
\end{dfn}

\begin{thm*}[Norbury's nonorientable cusped surface identity\cite{norbury}]
Given a nonorientable cusped hyperbolic surface $N$ with Fuchsian holonomy representation $\rho$, 
\begin{align*}
\sum_{\{\alpha_1,\beta_1\}\in\mathcal{S}_1(N)}\left(e^{\frac{1}{2}(\ell_{\alpha_1}(\rho)+\ell_{\beta_1}(\rho))}-1\right)^{-1}
+\sum_{\{\alpha_2,\beta_2\}\in\mathcal{S}_2(N)}\left(e^{\frac{1}{2}(\ell_{\alpha_2}(\rho)+\ell_{\beta_2}(\rho))}+1\right)^{-1}
=\frac{1}{2}.
\end{align*}
\end{thm*}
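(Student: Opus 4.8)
The plan is to run the McShane--Mirzakhani gap argument directly on $N$, paying attention to the places where nonorientability intervenes. Normalize so that the distinguished cusp $p$ lifts to $\infty\in\partial\mathbb{H}^2$ with parabolic stabilizer generated by $z\mapsto z+1$; then the horocycle $\{\operatorname{Im}z=1\}$ descends to an embedded horocyclic circle $h\subset N$ of length $1$, which I identify with $\mathbb{R}/\mathbb{Z}$. To $x\in\mathbb{R}/\mathbb{Z}$ I assign the complete geodesic ray $\gamma_x$ emanating from $p$ into $N$ perpendicular to $h$ (the image of the vertical geodesic from $\infty$ to $x$). The first step is the Birman--Series input: the set $\mathcal{X}\subseteq\mathbb{R}/\mathbb{Z}$ of parameters for which $\gamma_x$ is \emph{simple} and does not run into a cusp has measure zero --- the proof, bounding the Hausdorff dimension of the union of complete simple geodesics, is insensitive to orientability --- while the set of $x$ for which $\gamma_x$ terminates in a cusp is countable. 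Hence $(\mathbb{R}/\mathbb{Z})\setminus\mathcal{X}$ has full measure, and, being open (a transverse self-intersection of $\gamma_x$ persists under small perturbations of $x$), it decomposes as a countable disjoint union of open intervals, the \emph{gaps}.

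The combinatorial core of the argument is a dictionary between gaps and subsurfaces. Given a gap $(a,b)$, each of $\gamma_a,\gamma_b$ is simple and either spirals onto a simple closed geodesic or runs into a cusp; I would show that $\gamma_a\cup\gamma_b$ together with the gap interval $(a,b)\subset h$ fills a unique embedded geodesic-bordered subsurface $F_{(a,b)}$ that is either a pair of pants containing $p$, with the two remaining boundary components (simple closed geodesics or cusps) the pair $\{\alpha_2,\beta_2\}\in\mathcal{S}_2(N)$, or a $1$-holed M\"obius band containing $p$, whose two core $1$-sided geodesics form the pair $\{\alpha_1,\beta_1\}\in\mathcal{S}_1(N)$. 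The M\"obius case is the genuinely new feature: it arises precisely when $\gamma_x$ (for $x$ in the gap) crosses a $1$-sided simple closed geodesic before it self-intersects, so that the relevant "first return" holonomy is an orientation-reversing element of $\rho(\pi_1 N)$. Conversely, I would verify that every $F\in\mathcal{S}_1(N)\cup\mathcal{S}_2(N)$ occurs, that distinct $F$ yield disjoint gaps, and --- this is where the conventions in the Notes above enter --- that the gap of a M\"obius band $M$ together with the gaps of the two pants $P_\alpha,P_\beta\subset M$ obtained by cutting $M$ along $\alpha_1$ and $\beta_1$ exactly exhaust, without overlap, the portion of $h$ whose rays enter $M$ and remain there until self-intersecting. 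Setting up this dictionary and proving disjointness and exhaustion --- while keeping track of the two "spiralling directions" in the absence of a global orientation, which is the crux of Norbury's modification of the classical argument --- is the step I expect to be the main obstacle.

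The remaining step is the trigonometric computation of the horocyclic width $w(F)$ of the gap accounted to each $F$. Lifting $\gamma_a,\gamma_b$ to $\mathbb{H}^2$, their endpoints on $\mathbb{R}=\partial\mathbb{H}^2\setminus\{\infty\}$ are fixed points of the boundary (or core) holonomies of $F$, and $w(F)$ is their separation. For a pair of pants this is McShane's classical computation and gives $w(F)$ proportional to $\big(e^{\frac12(\ell_{\alpha_2}(\rho)+\ell_{\beta_2}(\rho))}+1\big)^{-1}$. For a $1$-holed M\"obius band the holonomy of a core curve $\alpha_1$ is a glide reflection: lifting it to $\mathrm{GL}(2,\mathbb{R})$ with determinant $-1$ one finds $|\operatorname{tr}|=2\sinh\!\big(\tfrac12\ell_{\alpha_1}(\rho)\big)$, in place of the $2\cosh\!\big(\tfrac12\ell_{\gamma}(\rho)\big)$ attached to a $2$-sided geodesic; propagating this substitution (together with the standard length relation for the boundary of a $1$-holed M\"obius band) through the same trigonometric identity turns the $+1$ into a $-1$, giving $w(M)$ proportional to $\big(e^{\frac12(\ell_{\alpha_1}(\rho)+\ell_{\beta_1}(\rho))}-1\big)^{-1}$. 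Summing $w(F)$ over all $F\in\mathcal{S}_1(N)\cup\mathcal{S}_2(N)$, and using that the gaps tile the full-measure subset of the length-$1$ horocycle $h$, yields the identity; the constant $\tfrac12$ appears through the same normalization bookkeeping as in McShane's original argument, each term of the stated sum being half the horocyclic width accounted to its subsurface. Absolute convergence is then automatic since every summand is positive. One could instead attempt to descend the orientable identity from the double $dN$, but the way a $1$-sided curve of $N$ pulls back to a $2$-sided curve of $dN$ of doubled length makes that bookkeeping less transparent, so I would follow the direct route.
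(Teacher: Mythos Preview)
Your proposal is correct but follows a genuinely different route from the paper. You propose to rerun the McShane gap argument directly on the cusped surface $N$: partition the horocycle at $p$ by the Birman--Series set, classify gaps by embedded pairs of pants and M\"obius bands, and compute the horocyclic widths trigonometrically---with the glide-reflection holonomy of a $1$-sided core curve producing the $-1$ in place of the $+1$. The paper instead takes Norbury's identity for \emph{bordered} nonorientable surfaces as a black box, divides both sides by the distinguished boundary length $L_1$, and passes to the limit $L_1\to 0$. Its work then consists of (i) computing the termwise limits---this is where the M\"obius summand $(e^{\frac12(\ell_{\alpha_1}+\ell_{\beta_1})}-1)^{-1}$ is extracted, via an algebraic splitting of Norbury's $E$-function together with the trace relation $1+\cosh\tfrac{\ell_\nu}{2}=2\sinh\tfrac{\ell_\mu}{2}\sinh\tfrac{\ell_{\mu'}}{2}$ on a $1$-holed M\"obius band---and (ii) establishing uniform convergence along the degenerating path, so that the termwise limit is an equality rather than merely $\leq\tfrac12$. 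Your direct approach is more geometric and self-contained; the paper in fact develops exactly the gap dictionary you sketch (the fourteen-geodesic lemma for the punctured M\"obius strip, the trichotomy for simple rays from the cusp) but deploys it only for the quasifuchsian extension, not for the Fuchsian identity itself. The limit approach buys economy---no need to redo Norbury's gap classification and trigonometry---at the cost of the uniform-convergence bookkeeping.
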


\begin{note}\label{note:1}
Denote the geometric intersection number of two geodesics $\alpha$ and $\beta$ by $\alpha\cdot\beta$. Then, the summand for each of the partial sums in the above identity may be expressed as:
\begin{align*}
\left(e^{\frac{1}{2}(\ell_{\alpha}(\rho)+\ell_{\beta}(\rho))}+(-1)^{\alpha\cdot\beta}\right)^{-1}.
\end{align*}
We henceforth adopt this notational convention for succinctness.
\end{note}

\begin{note}
The above identity is implicitly in\cite{norbury}, which has a McShane identity for bordered nonorientable surfaces. This cusped surface version of the McShane identity may be derived from Theorem~2 of Norbury's paper by a limiting procedure. We give the explicit derivation in Appendix~\ref{sec:appendix} as well as an alternative expression.
\end{note}

There are known extensions of Norbury's identity to quasifuchsian representations when the underlying nonorientable surface is sufficiently topologically simple:
\begin{itemize}
\item
the once-punctured Klein bottle in\cite{norbury}, 
\item
the thrice-punctured projective plane in\cite{markoffquads,hutanzhang} and 
\item
the thrice-bordered projective plane in\cite{malonipalesi}. 
\end{itemize}
In each of these cases, the proof strategy is based on algebraic methods akin to Bowditch's strategy in\cite{markofftriples}.\medskip

\subsection{Main results}
Consider a nonorientable cusped hyperbolic surface $N$ with an oriented double cover $dN$, and let $\iota: dN\rightarrow dN$ denote the involution inducing the quotient map from $dN$ to $N= dN/x\sim\iota(x)$. The primary goal of this paper is to extend Norbury's McShane identities to quasifuchsian representations, much like how Akyoshi-Miyachi-Sakuma generalized McShane's originial identities. To begin with, we need to clarify the notion of a quasifuchsian representation when the underlying surface $N$ is nonorientable. In fact, unlike when the underlying surface $S$ is orientable, there are actually \emph{two} character varieties of representations of $\pi_1(N)$ that we must concern ourselves with, and correspondingly two (homeomorphism) types of hyperbolic $3$-manifolds which we must regard.

\begin{dfn}[Quasifuchsian representation of nonorientable surface groups]
\label{defn:qf}
A \emph{quasifuchsian representations} $\rho:\pi_1(N)\rightarrow\mathrm{PSL}(2,\mathbb{C})$ of a nonorientable surface group $\pi_1(N)$ is a discrete faithful representation whose limit set is quasicircle. 
\end{dfn}

The group $\rho(\pi_1(N))$ is a Kleinian group, and its associated quotient hyperbolic $3$-manifold $\mathbb{H}^3/\rho$ is an orientable hyperbolic $3$-manifold referred to as a \emph{twisted I-bundle}:
\begin{align*}
\left(dN\times\mathbb{R}\right)/(x,t)\sim(\iota(x),-t)=(dN\times [0,\infty))/(x,0)\sim(\iota(x),0).
\end{align*}
Twisted I-bundles are so-named because they are interval bundles over $N$. In particular, they are not product bundles, but resemble a $dN$-fiber bundle over $(-\infty,0]$, albeit with a (non-canonical) singular $N$ fiber over $\{0\}$.\medskip

The other natural geometric generalization of an orientable surface quasifuchsian representation is a nonorientable hyperbolic $3$-manifold homeomorphic to $N\times \mathbb{R}$. Since $\mathrm{PSL}(2,\mathbb{C})$ only allows for orientation-preserving automorphisms on $\mathbb{H}^3$, the holonomy representation for this second class of quasifuchsian representation generalizations instead maps to a $\mathbb{Z}_2$-extension of $\mathrm{PSL}(2,\mathbb{C})=\mathrm{Aut}^+(\mathbb{H}^3)= SO^+(1,3)$ isomorphic to the group $\mathrm{Aut}^\pm(\mathbb{H}^3)= O^+(1,3)$ of \emph{orthochronous Lorentz transformations}. By regarding $\mathrm{PSL}(2,\mathbb{C})= SO^+(1,3)$ as an index $2$ subgroup of $O^+(1,3)$, it is apparent that every quasifuchsian representation may be regarded as a $O^+(1,3)$ representation, we now consider a class of representations which do not arise in such a manner. 

\begin{dfn}[Transflected-quasifuchsian representation of nonorientable surface groups]
\label{defn:tqf}
A \emph{transflected-quasifuchsian representations} $\rho:\pi_1(N)\rightarrow O^+(1,3)$ of a nonorientable surface group $\pi_1(N)$ is a discrete faithful representation whose limit set is quasicircle, such that
\[
\rho(\pi_1(N)-\pi_1(dN))\subset O^+(1,3)-SO^+(1,3).
\]
\end{dfn}

\begin{prop}[Quasifuchsian space]
\label{thm:double}
The space $\mathcal{QF}(N)$ of quasifuchsian representations of $N$, as a holomorphic slice of the $\mathrm{PSL}(2,\mathbb{C})$ representation variety for $\pi_1(N)$, is complex analytically equivalent to the Teichm\"uller space $\mathcal{T}(dN)$. Moreover, the Teichm\"uller space $\mathcal{T}(N)$, regarded as the Fuchsian locus in $\mathcal{QF}(N)$, is a connected and maximal dimensional totally real analytic submanifold of $\mathcal{QF}(N)$.
\end{prop}

\begin{note}
Proposition~\ref{thm:double} actually gives an algebraic approach for describing the complex structure of the Teichm\"uller space of oriented surfaces (as every orientable surface arises as the double cover of some nonorientable surface). We expect the space $\mathcal{QF}(N)$ to be defined by infinitely many algebraic conditions, and hence is not a variety or a quasivariety. Nevertheless, $\mathcal{QF}(N)$ is a very concrete algebraic object with its boundary corresponding to representations of $N$-fibers in various hyperbolic $3$-manifolds. In Corollary~\ref{thm:corroyden}, we (virtually) characterize the group of all biholomorphisms of $\mathcal{QF}(N)$ as the extended mapping class group $\Gamma^\pm(dN)$ on $dN$. It seems possible to write down algebraic expressions for biholomorphisms corresponding to extended mapping classes of $N$ (regarded as elements of $\Gamma^\pm(N)\leq \Gamma^\pm(dN)$), but expressions for general elements are unclear.
\end{note}

Proposition~\ref{thm:double} is a combination of a special case of the main theorem of Bers'\cite{bersspaces} (see Theorem~10.8 of\cite{marden} or Theorem~3.3 of\cite{mcmullen} for a more modern statement, or see Sullivan's work\cite{sullivan} for an even wider-reaching generalization) and general facts to do with antiholomorphic involutions on complex manifolds. Bers' proof of the equivalence of complex structures relies on very general topological arguments; we furnish a slightly different proof in \S\ref{sec:propproof} utilizing cross-ratios, and in-so-doing introducing objects used in the proofs of our main results. The upshot of establishing this complex structure on $\mathcal{QF}(N)$ is to pair it with a version of the identity theorem for multivariate holomorphic functions so that we may prove the following:

\begin{thm}[Identity for quasifuchsian representations of nonorientable surface groups]
\label{thm:mcshane}
Given a nonorientable cusped hyperbolic surface $N$ and a quasifuchsian representation $\rho:\pi_1(N)\to \mathrm{PSL}(2,\mathbb{C})$, define $\mathcal{S}(N)$ to be the set of embedded pairs of pants and $1$-holed M\"{o}bius bands containing cusp $p$ (as per Note~\ref{note:1}). Then, 
\begin{align*}
\sum_{\{\alpha,\beta\}\in\mathcal{S}(N)}\left(e^{\frac{1}{2}(\ell_{\alpha}(\rho)+\ell_{\beta}(\rho))}+(-1)^{\alpha\cdot\beta}\right)^{-1}
=\frac{1}{2}.
\end{align*}
where $\ell_\gamma(\rho)$ is the \emph{complex length} of $\gamma$ (see \S\ref{sec:hyperbolic}).
\end{thm}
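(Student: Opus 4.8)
The plan is to run the Akiyoshi--Miyachi--Sakuma strategy: prove the identity first on the Fuchsian locus $\mathcal{T}(N)$ using Norbury's nonorientable cusped surface identity (quoted above), and then propagate it holomorphically across all of $\mathcal{QF}(N)$ using Theorem~\ref{thm:double} together with a version of the identity theorem. The first step is essentially input: on $\mathcal{T}(N)$ the left-hand side is precisely the sum appearing in Norbury's identity, once one adopts the sign convention of Note~\ref{note:1} (the summand $(e^{\frac12(\ell_\alpha+\ell_\beta)}+(-1)^{\alpha\cdot\beta})^{-1}$ unifies the M\"obius-band terms, where $\alpha_1\cdot\beta_1$ is odd, and the pair-of-pants terms, where $\alpha_2\cdot\beta_2$ is even). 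So on the real-analytic submanifold $\mathcal{T}(N)\subset\mathcal{QF}(N)$ the identity holds with value $\tfrac12$.

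The bulk of the work is to show that the left-hand side defines a holomorphic function on $\mathcal{QF}(N)=\mathcal{T}(dN)$. I would first fix, for each element $\{\alpha,\beta\}\in\mathcal{S}(N)$, the complex length functions $\ell_\alpha(\rho),\ell_\beta(\rho)$: these are holomorphic on the character variety since the trace of $\rho(\gamma)$ is a regular function of $\rho$ and $\ell_\gamma=2\operatorname{arccosh}(\pm\tfrac12\operatorname{tr}\rho(\gamma))$ is holomorphic wherever $\rho(\gamma)$ is loxodromic (which it is throughout $\mathcal{QF}(N)$, these curves being essential); one must keep careful track of the branch of $\operatorname{arccosh}$ and the sign so that $\operatorname{Re}\ell_\gamma>0$ and the function varies continuously — on $\mathcal{QF}(N)$ this is pinned down by agreement with the Fuchsian case on $\mathcal{T}(N)$. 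Hence each summand $f_{\{\alpha,\beta\}}(\rho)=(e^{\frac12(\ell_\alpha(\rho)+\ell_\beta(\rho))}+(-1)^{\alpha\cdot\beta})^{-1}$ is holomorphic away from its poles, and one checks the denominator does not vanish on $\mathcal{QF}(N)$ (again using that it is nonzero on the Fuchsian locus, where $\ell$ is real and positive, plus a quasifuchsian collar/geometry estimate to control the imaginary parts). The key analytic point is then \emph{locally uniform absolute convergence} of $\sum_{\{\alpha,\beta\}}f_{\{\alpha,\beta\}}(\rho)$ on $\mathcal{QF}(N)$: on a neighbourhood of any $\rho_0$ one must bound $|f_{\{\alpha,\beta\}}(\rho)|$ by a summable quantity uniformly. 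This is where I would mimic Akiyoshi--Miyachi--Sakuma: use that $\mathcal{S}(N)$ injects (via the covering $dN\to N$, as in the bundle-identity setup) into geodesics on $dN$ whose real lengths grow, and that for $\rho$ quasifuchsian the complex translation lengths are comparable to the hyperbolic lengths of the corresponding geodesics on a fixed nearby hyperbolic structure, so $|e^{\frac12(\ell_\alpha+\ell_\beta)}+(-1)^{\alpha\cdot\beta}|^{-1}$ is dominated by $Ce^{-\frac12(\ell^{hyp}_\alpha+\ell^{hyp}_\beta)}$ for the doubled surface, which is summable by Norbury's (Fuchsian) absolute convergence. Granting this, the sum $F(\rho)=\sum f_{\{\alpha,\beta\}}(\rho)$ is holomorphic on $\mathcal{QF}(N)$.

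The final step is the identity theorem. By Theorem~\ref{thm:double}, $\mathcal{T}(N)$ is a connected, maximal-dimensional, totally real analytic submanifold of the connected complex manifold $\mathcal{QF}(N)=\mathcal{T}(dN)$. The holomorphic function $G(\rho)=F(\rho)-\tfrac12$ vanishes on $\mathcal{T}(N)$ by Norbury's identity; since a holomorphic function vanishing on a maximal totally real submanifold of a connected complex manifold vanishes identically (the Taylor coefficients at a point of $\mathcal{T}(N)$ are determined by the restriction to $\mathcal{T}(N)$, as $\mathcal{T}(N)$ is totally real of full real dimension), we conclude $F\equiv\tfrac12$ on all of $\mathcal{QF}(N)$, which is the claim. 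I expect the main obstacle to be the locally-uniform convergence estimate: one needs a genuinely quasifuchsian (not merely Fuchsian) control on how the complex lengths $\ell_\alpha(\rho),\ell_\beta(\rho)$ grow over the family $\mathcal{S}(N)$, uniform on compact subsets of $\mathcal{QF}(N)$ — this is the technical heart of the Akiyoshi--Miyachi--Sakuma method and the place where the nonorientable bookkeeping (passing correctly between $N$ and $dN$, and handling the M\"obius-band terms with their $-1$ in the denominator, whose modulus could a priori be small) must be done with care.
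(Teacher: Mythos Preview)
Your proposal is correct and follows essentially the same route as the paper: Norbury's identity on the Fuchsian locus, holomorphicity of the series on $\mathcal{QF}(N)$ via locally uniform absolute convergence, and then the identity theorem applied to the maximal totally real submanifold $\mathcal{T}(N)\subset\mathcal{QF}(N)=\mathcal{T}(dN)$ furnished by Theorem~\ref{thm:double}. The only place where the paper is more explicit than your sketch is precisely the step you flagged as the main obstacle: the uniform bound is obtained by combining a Birman--Series polynomial count for simple curves (via a fundamental-domain word-length norm) with the Akiyoshi--Miyachi--Sakuma lemma $\tfrac{c}{k}\|\gamma\|\le\tfrac{1}{k}\ell_\gamma(\rho_0)\le\mathrm{Re}\,\ell_\gamma(\rho)$ on compacta, which in particular forces $|e^{\frac12(\ell_\alpha+\ell_\beta)}|>1$ and so disposes of your worry about the $-1$ denominators.
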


Similar results hold for transflected-quasifuchsian representations (see, e.g.: Theorem~3.3 of\cite{mcmullen}):

\begin{prop}[Transflected-quasifuchsian space]
\label{thm:tqfcharvar}
The space $\mathcal{TQF}(N)$ of transflected-quasifuchsian representations of $N$ is real-analytically equivalent to $\mathcal{T}(N)\times\mathcal{T}(N)$.
\end{prop}

\begin{thm}[Identity for transflected-quasifuchsian representations of nonorientable surface groups]
\label{thm:mcshanetransf}
Given a nonorientable cusped hyperbolic surface $N$ and a transflected-quasifuchsian representation $\rho:\pi_1(N)\to O^+(1,3)$, define $\mathcal{S}(N)$ to be the set of embedded pairs of pants and $1$-holed M\"{o}bius bands containing cusp $p$ (as per Note~\ref{note:1}). Then, 
\begin{align*}
\sum_{\{\alpha,\beta\}\in\mathcal{S}(N)}
\mathrm{Re}
\left(e^{\frac{1}{2}(\ell_{\alpha}(\rho)+\ell_{\beta}(\rho))}+(-1)^{\alpha\cdot\beta}\right)^{-1}
=\frac{1}{2}.
\end{align*}
where $\ell_\gamma(\rho)$ is the \emph{complex length} of $\gamma$ (see \S\ref{sec:hyperbolic}).
\end{thm}

\begin{note}
\label{note:length}
Strictly speaking, for transflected-quasifuchsian representations, the length $\ell_\gamma(\rho)$ is only well-defined up to conjugation (\S\ref{sec:hyperbolic}). Thankfully, it is clear that 
\[
\mathrm{Re}
\left(e^z+(-1)^k\right)^{-1}
=
\mathrm{Re}
\left(e^{\bar{z}}+(-1)^k\right)^{-1}
=\tfrac{1}{2}\left(
\left(e^z+(-1)^k\right)^{-1}
+\left(e^{\bar{z}}+(-1)^k\right)^{-1}
\right),
\]
and hence every summand in Theorem~\ref{thm:mcshanetransf} is well-defined. This comment applies also to Theorem~\ref{thm:zero}.
\end{note}

Just as there are two topologically distinct generalizations to the notion of a quasifuchsian representation in the nonorientable setting, we consider two generalizations of pseudo-Anosov mapping tori. The first is geometrically (and topologically) natural: pseudo-Anosov mapping tori with fiber $N$. To begin with, a pseudo-Anosov map for a nonorientable surface is the same as for the orientable setting:

\begin{dfn}[Pseudo-Anosov map for nonorientable surfaces]
Given a nonorientable hyperbolic surface $N$, we call a cusp-fixing homeomorphism $\varphi: N\rightarrow N$ \emph{pseudo-Anosov} if there is a pair $(\mathcal{F}^s,\mathcal{F}^u)$ of measured foliations of $N$ such that:
\begin{itemize}
\item
the stable measured foliation $\mathcal{F}^s$ and the unstable measured foliation $\mathcal{F}^u$ are transverse outside of the singular loci;
\item
the map $\varphi$ preserves the underlying foliations for $\mathcal{F}^s$ and $\mathcal{F}^u$, and acts on $\mathcal{F}^s$ as multiplication by $\kappa^{-1}<1$ and on $\mathcal{F}^u$ as multiplication by $\kappa>1$.
\end{itemize}
See\cite{arnoux, liechtistrenner, pennerpa, strenner} for examples and potential constructions.
\end{dfn}

\begin{note}
To clarify, we only consider cusp-fixing pseudo-Anosov homeomorphisms (i.e.: they do not permute different cusps) in this paper. 
\end{note}

\begin{note}
A homeomorphism $\varphi:N\rightarrow N$ is pseudo-Anosov if and only if it lifts to a pseudo-Anosov map $d\varphi:dN\rightarrow dN$ which commutes with the orientation-reversing involution $\iota:dN\rightarrow dN$. Therefore, by replacing $\varphi$ with $\varphi\circ\iota$ if necessary, we may choose $d\varphi$ to be orientation-preserving.
\end{note}

Given a pseudo-Anosov map $\varphi:N\to N$, its induced pseudo-Anosov mapping torus $M_\varphi$ is again defined as $M_\varphi:=N\times[0,1]/(x,0)\sim(\varphi(x),1)$. Unlike the orientable fiber setting, the pseudo-Anosov mapping torus for a nonorientable fiber $N$ is not an orientable $3$-manifold, and so its holonomy representation $\phi$ does not map to $\mathrm{PSL}(2,\mathbb{C})=SO^+(1,3)$ but to the orthochronous Lorentz group $O^+(1,3)$. The group $\phi(\pi_1(M_\varphi))$ cannot be the limit of quasifuchsian groups but is instead the limit of transflected-quasifuchsian groups $\rho(\pi_1(N))$.

\begin{thm}[Identity for pseudo-Anosov mapping tori with nonorientable fiber]
\label{thm:zeropatori}
Given a pseudo-Anosov map $\varphi: N\rightarrow N$ let $\phi:\pi_1(M_\varphi)\to O^+(1,3)$ denote the holonomy representation for the mapping torus $M_\varphi$. Then,
\begin{align}
\sum_{\{\alpha,\beta\}\in\mathcal{S}_\varphi}
\left(e^{\frac{1}{2}(\ell_{\alpha}(\phi)+\ell_{\beta}(\phi))}+(-1)^{\alpha\cdot\beta}\right)^{-1}=0,
\end{align}
where $S_\varphi=\mathcal{S}(N)/\left(\{\alpha,\beta\}\sim\{\varphi_*\alpha,\varphi_*\beta\}\right)$ denotes the set of homotopy classes in $M_\varphi$ of of pairs of pants (containing cusp $p$) lying on a fiber $N$. See \S\ref{sec:hyperbolic} for clarification on what length should mean in this context.
\end{thm}

The final type of hyperbolic $3$-manifold we consider is orientable, complete, finite-volume and arises as a limit of quasifuchsian (rather than transflected-quasifichsian) groups $\rho(\pi_1(N))$. To begin with, we need a slight twist on the notion of a pseudo-Anosov map:

\begin{dfn}[Twisted-pA pair for nonorientable surfaces]
Given a nonorientable hyperbolic surface $N$, and its orientable double $dN$ such that $N=dN/\iota$. We call a pair of homeomorphisms $(d\varphi,\iota): dN\rightarrow dN$ a \emph{twisted-pseudo-Anosov pair} (twisted-pA pair) if there is a pair $(\mathcal{F}^s,\mathcal{F}^u)$ of measured foliations of $N$ such that:
\begin{itemize}
\item
the stable measured foliation $\mathcal{F}^s$ and the unstable measured foliation $\mathcal{F}^u$ are transverse outside of the singular loci;
\item
the (orientation-reversing) involution map $\iota$ exchanges $\mathcal{F}^u$ and $\mathcal{F}^s$.
\item
the map $d\varphi:dN\to dN$ is a orientation-preserving pseudo-Anosov homeomorphism, takes $\mathcal{F}^u$ to $\kappa\mathcal{F}^u$ and takes $\mathcal{F}^s$ to $\kappa^{-1}\mathcal{F}^s$ for some $\kappa>1$,
\end{itemize}
Note that the foliations for $\mathcal{F}^s$ and $\mathcal{F}^u$ identify to the same transversely self-intersecting ``foliation'' on $N$.
\end{dfn}

\begin{note}
Since $\iota$ exchanges $\mathcal{F}^u$ and $\mathcal{F}^s$, therefore $\iota\circ d\varphi\circ\iota$ takes $\mathcal{F}^u$ to $\kappa^{-1}\mathcal{F}^u$ and $\mathcal{F}^s$ to $\kappa\mathcal{F}^s$. This means that $\iota\circ d\varphi\circ\iota=d\varphi^{-1}$, which in turn asserts that $d\varphi\circ\iota$ is (also) an (orientation-reversing) involution on $dN$.
\end{note}

\begin{dfn}[Pseudo-Anosov mapping Klein bottles]
Given a twisted-pA pair $(d\varphi,\iota): dN\to dN$, we define the $(d\varphi,\iota)$-induced \emph{pseudo-Anosov mapping Klein bottle} $K_{(d\varphi,\iota)}$ as 
\[
K_{(d\varphi,\iota)}:=\left(dN\times\left[0,\tfrac{1}{2}\right]\right)/\left((x,0)\sim(\iota(x),0)\text{ and }\left(x,\tfrac{1}{2}\right)\sim\left(d\varphi\circ\iota(x),\tfrac{1}{2}\right)\right).
\]
The name \emph{mapping Klein bottle} owes to the fact that $K_{(d\varphi,\iota)}$ is a cyclinder $dN\times [0,\frac{1}{2}]$ its ends respectively identified by orientation-reversing involutions $\iota$ and $d\varphi\circ\iota$. The analogous construction for $\mathbb{S}^1\times[0,\frac{1}{2}]$ results in a Klein bottle.
\end{dfn}

\begin{thm}[Identity for pseudo-Anosov mapping Klein bottle]
\label{thm:zero}
Given a twisted-pA pair $(d\varphi,\iota): dN\rightarrow dN$ let $\phi:\pi_1(K_{(d\varphi,\iota)})\to \mathrm{PSL}(2,\mathbb{C})$ denote the holonomy representation for the pA mapping Klein bottle $K_{(d\varphi,\iota)}$. Then,
\begin{align}
\sum_{\{\alpha,\beta\}\in\mathcal{S}_{(d\varphi,\iota)}}
\left(e^{\frac{1}{2}(\ell_{\alpha}(\phi)+\ell_{\beta}(\phi))}+(-1)^{\alpha\cdot\beta}\right)^{-1}=0,
\end{align}
where the set $\mathcal{S}_{(d\varphi,\iota)}:=\mathcal{S}(N)/\sim$ is defined by the equivalence relation generated as follows: $\{\alpha,\beta\}\sim\{\alpha',\beta'\}$ if there are simple closed curves $\hat{\alpha},\hat{\beta},\hat{\alpha}',\hat{\beta}'$ on $dN$ respectively lifting $\alpha,\beta,\alpha',\beta'$ on $N$ such that
\[
\{\hat{\alpha}',\hat{\beta}'\}=\{d\varphi_*\hat{\alpha},d\varphi_*\hat{\beta}\}.
\]
Put simply, $\mathcal{S}_{(d\varphi,\iota)}$ denotes the set of homotopy classes, in $K_{(d\varphi,\iota)}$, of pairs of pants (containing cusp $p$) lying on either of the two twisted $N$-fibers in $K_{(d\varphi,\iota)}$. We again refer to \S\ref{sec:hyperbolic} for clarification on what complex length means in this context.
\end{thm}

In \S\ref{sec:final}, we also show how McShane identities may be used to obtain geometric information about cuspidal tori for pA mapping Klein bottles. This result is significantly more technical in nature and we delay its statement for Theorem~\ref{thm:kleinbottle}.

\section{Character varieties for $\pi_1(N)$}

\subsection{Generalized quasifuchsian representations of nonorientable surface groups}

Let $\rho_0: \pi_1(N)\rightarrow \mathrm{PGL}(2,\mathbb{R})$ be a Fuchsian representation for the nonorientable hyperbolic surface $N$. We regard $\pi_1(dN)$ as a index 2 subgroup of $\pi_1(N)$, and denote the restriction representation to $\pi_1(dN)$ by 
\[
d\rho_0:=\rho_0|_{\pi_1(dN)}: \pi_1(dN)\rightarrow \mathrm{PSL}(2,\mathbb{R})\leq\mathrm{PGL}(2,\mathbb{R}).
\]
Fix an arbitrary $1$-sided curve $\alpha_0\in\pi_1(N)-\pi_1(dN)$ and set $A_0:=\rho_0(\alpha_0)$. Since $\pi_1(dN)$ is an index 2 subgroup of $\pi_1(N)$, it is necessarily a normal subgroup and hence $\pi_1(dN)=\alpha_0\cdot\pi_1(dN)\cdot\alpha_0^{-1}$.\medskip

We concern ourselves with two generalized notions of quasifuchsian representations for $\pi_1(N)$. The philosophy that we take is that:
\begin{enumerate}
\item
a generalized quasifuchsian representation should be a representation $\rho:\pi_1(N)\to\mathrm{Aut}(\mathbb{H}^3)=O^+(1,3)$ into the group of (potentially orientation-reversing) isometries of $\mathbb{H}^3$.
\item
the limit set needs to be a $\rho(\pi_1(N))$-invariant Jordan curve $C_\rho$. In this language, the representations we consider are known as \emph{type I} quasifuchsian representations.
\item
one should be able to deform from $\rho_0$ to any generalized quasifuchsian representation along a continuous path of such generalized quasifuchsian representations.
\end{enumerate}

These are all necessary conditions for quasifuchsian representations, but condition~1 is a strictly weaker condition than the usual requirement that a quasifuchsian representation should map only to orientation-preserving isometries $\mathrm{Aut}^+(\mathbb{H}^3)=\mathrm{PSL}(2,\mathbb{C})$. The usual condition means that every element $\rho_0(\gamma)\in\mathrm{PGL}(2,\mathbb{R})$ extends uniquely to an orientation-preserving isometry of $\mathbb{H}^3$, namely, by regarding $\mathrm{PGL}(2,\mathbb{R})$ as a subgroup of $\mathrm{PGL}(2,\mathbb{C})=\mathrm{PSL}(2,\mathbb{C})$. This approach leads to quasifuchsian representations $\rho:\pi_1(N)\to\mathrm{PSL}(2,\mathbb{C})$ as per Definition~\ref{defn:qf}. We denote the space of characters for quasifuchsian representations $\rho:\pi_1(N)\rightarrow\mathrm{PSL}(2,\mathbb{C})$, regarded as a subset of the character variety 
\[
{\mathrm{Hom}(\pi_1(N),\mathrm{PSL}(2,\mathbb{C}))/\!\!/ \mathrm{PSL}(2,\mathbb{C})},
\]
by $\mathcal{QF}(N)$. Fuchsian representations are a special class of quasifuchsian representations, and we refer to the subset of $\mathcal{QF}(N)$ occupied by Fuchsian representations as the \emph{Fuchsian locus} in $\mathcal{QF}(N)$.\medskip

Utilizing the relaxed form of condition~1 means that each element $\rho_0(\gamma)\in\mathrm{PGL}(2,\mathbb{R})$ has two potential extensions to $\mathrm{Aut}(\mathbb{H}^3)$. For $2$-sided non-peripheral essential curves $\gamma\in\pi_1(N)$, we know that $\rho_0(\gamma)$ is a hyperbolic isometry on $\mathbb{H}^2$ and hence extends either to a hyperbolic isometry (translation) on $\mathbb{H}^3$ or a transflection (glide-plane operation) on $\mathbb{H}^3$. The former is orientation-preserving (i.e.: $\rho_0(\gamma)\in\mathrm{PSL}(2,\mathbb{C})$) whereas the latter is orientation-reversing (i.e.: $\rho_0(\gamma)\in O^+(1,3)-SO^+(1,3)$). Similarly, for $1$-sided essential curves $\gamma\in\pi_1(N)$, which are necessarily non-peripheral, we know that $\rho_0(\gamma)$ is planar transflection (glide-reflection) and extends either orientation-preservingly to a loxodromic isometry (with $\pi$-twist) or to an orientation-reversing transflection of $\mathbb{H}^3$. This added level of flexibility means that there are finitely many distinct representation varieties (and character varieties) of generalized quasifuchsian representations depending upon the choice of orientation-preserving or reversing for the generators of $\pi_1(N)$. Of these many choices, we shall consider the choice which identifies $\mathbb{H}^3/\rho_0(\pi_1(N))$ with $N\times\mathbb{R}$, which is that all $1$-sided curves map to orientation-reversing isometries and all $2$-sided curves map to orientation-preserving isometries. This in turn implies that $\rho_0(\pi_1(dN))$, which is generated by $2$-sided curves, must lie inside $\mathrm{Aut}^+(\mathbb{H}^3)$. Since $\rho_0$ is a representation and $\pi_1(dN)$ is an index $2$ subgroup of $\pi_1(N)$, the existence of the $1$-sided curve $\alpha_0\in\pi_1(N)$ mapping to an orientation-reversing isometry $A_0=\rho_0(\alpha_0)$ would then ensure that
\[
\rho_0(\alpha_0\cdot\pi_1(dN))=\rho_0(\pi_1(N)-\pi_1(dN))\subset\mathrm{Aut}(\mathbb{H}^3)-\mathrm{Aut}^+(\mathbb{H}^3),
\]
hence Definition~\ref{defn:tqf}. We denote the space of characters for transflected-quasifuchsian representations $\rho:\pi_1(N)\rightarrow O^+(1,3)$, regarded as a subset of the character variety 
\[
{\mathrm{Hom}(\pi_1(N),O^+(1,3))/\!\!/ O^+(1,3)},
\]
by $\mathcal{TQF}(N)$. We again refer to the subset of $\mathcal{TQF}(N)$ occupied by Fuchsian representations as the \emph{Fuchsian locus} in $\mathcal{TQF}(N)$.\medskip

All in all, we consider quasifuchsian representations (Definition~\ref{defn:qf}) because they are natural from the perspective of Kleinian group theory, and we focus also on transflected-quasifuchsian representations (Definition~\ref{defn:tqf}) for their topological and geometric naturality in the setting of quasifuchsian hyperbolic $3$-manifold theory.

\subsection{The geometry and topology of generalized quasifuchsian $3$-manifolds}
\label{sec:geotop}

For an orientable surface $S$, the quasifuchsian $3$-manifold $\mathbb{H}^3/\rho(\pi_1(S))$ is homeomorphic to $S\times \mathbb{R}$ (see Theorem~10.2 of\cite{hempel}). This is easily seen when $\rho:\pi_1(S)\rightarrow\mathrm{PGL}(2,\mathbb{R})$ is Fuchsian: the foliation of $\mathbb{H}^3$ into equidistant (non-geodesic) ``planes'' from the central geodesic plane $\mathbb{H}^2\subset\mathbb{H}^3$ is preserved by the action of $\rho$ and hence descends to a foliation of $\mathbb{H}^3/\rho(\pi_1(S))$ into equidistant surfaces surrounding the central copy of $S$. Uhlenbeck\cite{uhlenbeck} showed that $\mathbb{H}^3/\rho(\pi_1(S))$ for almost Fuchsian representations similarly admit a global equidistant foliation from a (unique) central minimal surface. Wang\cite{wang} later showed that any almost Fuchsian $\mathbb{H}^3/\rho(\pi_1(S))$ also admits a unique foliation into constant mean curvature surfaces. These canonical foliations give concrete identifications between the quasifuchsian $3$-manifold $\mathbb{H}^3/\rho(\pi_1(S))$ and $S\times\mathbb{R}$. Note however that these particular foliations do not generalize for arbitrary quasifuchsian representations.\medskip

For a quasifuchsian representation of a nonorientable surface $N$, denote $\rho|_{\pi_1(dN)}$ by $d\rho$ and observe that $d\rho$ is quasifuchsian because $\rho$ and $d\rho$ share the same Jordan curve $C_\rho$ at infinity. Thus Theorem~10.5 of\cite{hempel} ensures that the quasifuchsian $3$-manifold $\mathbb{H}^3/\rho(\pi_1(N))$ is homeomorphic to a twisted interval bundle over $N$, which contains $N$ as a $1$-sided embedded surface. Generally speaking, the $0$-section $N$ of this interval bundle is not canonical. However, when $d\rho$ is an almost-Fuchsian representations, we obtain two different canonical fibrations of $\mathbb{H}^3/\rho(\pi_1(N))$. The first fibration extends Uhlenbeck's result to the nonorientable context and consists of equidistant $dN$ fibers centered around a minimal surface fiber $N$. The second is a family of constant mean curvature $dN$ surrounding the same embedded copy of $N$.\medskip

Similarly, for a transflected-quasifuchsian representation $\rho:\pi_1(N)\to O^+(1,3)$, the restriction representation $d\rho:=\rho|_{\pi_1(dN)}$ may be regarded as a quasifuchsian $SO^+(1,3)=\mathrm{PSL}(2,\mathbb{C})$-representation of $\pi_1(dN)$. In this case, Theorem~10.2 of\cite{hempel} asserts that the quasifuchsian $3$-manifold $\mathbb{H}^3/\rho(\pi_1(N))$ is homeomorphic to the product interval bundle $N\times\mathbb{R}$. Again, this fibration structure is not canonical. Although when $d\rho$ is almost Fuchsian, there are canonical fibrations via either Uhlenbeck's approach or via a family of constant mean curvature $N$.

\subsubsection{Pseudo-Anosov limits of generalized quasifuchsian $3$-manifolds}
\label{sec:pseudo}
We consider two distinct types of ``pseudo-Anosov'' $3$-manifolds in this paper, let us begin with the more familiar: pseudo-Ansov mapping tori $M_\varphi$. Let us study a pseudo-Anosov mapping torus $M_\varphi$ by ``unwrapping'' $M_\varphi$ with respect to its $\varphi$-monodromy to produce $N\times\mathbb{R}$, where $\varphi$ acting by $(x,t)\mapsto (\varphi(x),t+1)$ lifts to a $\mathbb{Z}$-action. We may lift this whole picture to $dN\times\mathbb{R}$, with a lift $d\varphi$ acting by $(x,t)\mapsto (\varphi,t+1)$. In particular, we see that $M_\varphi$ is a $\mathbb{Z}_2$-quotient (fiberwise by $\iota$ since $\iota$ commutes with $d\varphi$) of the mapping torus $M_{d\varphi}$ with orientable fiber $dN$. Since $M_{d\varphi}$ is an orientable $3$-manifold, its monodromy representation is a $\mathrm{PSL}(2,\mathbb{C})$-representation, and we shall make use of this fact for some of our arguments and constructions.\medskip

Similarly, we may unwrap a pseudo-Anosov mapping Klein bottle $K_{(d\varphi,\iota)}$ with respect to the involutions $\iota$ and $\varphi\circ\iota$. This again results in $dN\times\mathbb{R}$ where
\begin{itemize}
\item
$\iota_0:=\iota$ acts by $(x,t)\mapsto (\iota_0(x),-t)$;
\item
$\iota_1:=d\varphi\circ\iota$ acts by $(x,t)\mapsto (\iota_1(x),1-t)$.
\end{itemize}
The above two conditions imply that $d\varphi$ acts by $(x,t)\mapsto (d\varphi(x),t+1)$, and one may generate a $\mathbb{Z}$-family of involutions
\[
\iota_k:=(d\varphi)^k\circ\iota=\iota\circ(d\varphi)^{-k}\text{, which acts on $dN\times\mathbb{R}$ via }(x,t)\mapsto (\iota_k(x),k-t).
\]
Note again that $M_{d\varphi}$ is a $2$-cover of $K_{(d\varphi,\iota)}$, where the quotient is induced by the $\iota$ action on $dN\times\mathbb{R}$ (in particular, the quotient cannot be the fiber-wise action of $\iota$ on each $dN$ fiber). Note also that although the $\iota_k$, regarded as involutions on $dN\times\mathbb{R}$, acts in an orientation-preserving manner.

\subsection{Double uniformization for nonorientable surfaces}\label{sec:propproof}

The aim of this section is to describe the quasifuchsian space $\mathcal{QF}(N)$ and the transflected-quasifuchsian space $\mathcal{TQF}(N)$ for a nonorientable surface group $\pi_1(N)$. We have already seen in Theorem~\ref{thm:tqfcharvar} that:

\begin{repeatthm1'}[Twisted-quasifuchsian space]
The space $\mathcal{TQF}(N)$ of transflected-quasifuchsian representations of $N$ is real-analytically equivalent to $\mathcal{T}(N)\times\mathcal{T}(N)$.
\end{repeatthm1'}

We next consider the quasifuchsian space $\mathcal{QF}(N)$. Fix three arbitrary hyperbolic elements $\gamma_{0},\gamma_{1},\gamma_{\infty}\in\pi_1(dN)$ and normalize every character $[\rho]\in\mathcal{QF}(N)$ to be the representation $\rho$ where the attracting fixed point of $\rho(\gamma_z)$ in $\partial\mathbb{H}^3=\hat{\mathbb{C}}$ is $z$. This is an embedding of the quasifuchsian character variety $\mathcal{QF}(N)$ as a slice within the $\mathrm{PSL}(2,\mathbb{C})$ representation variety for $\pi_1(N)$. In particular, the embedding is algebraic and hence induces a complex structure on $\mathcal{QF}(N)$. We choose to renormalize $\rho_0$ so as to lie on this slice.

\begin{repeatthm1}[Quasifuchsian space]
The space $\mathcal{QF}(N)$ of quasifuchsian representations of $N$, as a holomorphic slice of the $\mathrm{PSL}(2,\mathbb{C})$ representation variety for $\pi_1(N)$, is complex analytically equivalent to the Teichm\"uller space $\mathcal{T}(dN)$. Moreover, the Teichm\"uller space $\mathcal{T}(N)$, regarded as the Fuchsian locus in $\mathcal{QF}(N)$, is a connected and maximal dimensional totally real analytic submanifold of $\mathcal{QF}(N)$.
\end{repeatthm1}

\begin{note}
In specifying the complex structure on $\mathcal{T}(dN)$, we orient $dN$ as the upper-half plane conformal end $\mathbb{H}\subset\hat{\mathbb{C}}/\rho_0(\pi_1(dN))$ rather than the lower-half plane conformal end $\overline{\mathbb{H}}/\rho_0(\pi_1(dN))$.
\end{note}

\begin{proof}
Given an arbitrary quasifuchsian representation $\rho\in\mathcal{QF}(N)$, orient the limit curve $C_\rho$ so that $0,1,\infty\in C_\rho$ are in increasing order, the Jordan domain $\omega_\rho$ bordered counterclockwise by $C_\rho$ gives a marked conformal structure on $dN$ given by the action of $\pi_1(dN)$ on $\omega_\rho$ via $\rho$. This gives a well-defined map
\[
\Phi:\mathcal{QF}(N)\rightarrow \mathcal{T}(dN).
\]
We first show that $\Phi$ is surjective. Given the Beltrami differential $\mu$ corresponding to an arbitrary marked conformal structure in $\mathcal{T}(dN)$, define a new Beltrami differential given by:
\begin{align}\label{eq:beltrami}
\mu_\#(z)
=\left\{
\begin{array}{ll}
\mu(z), &\text{if }z\in\mathbb{H};\\
\mu(A_0\cdot z), &\text{if }z\in\overline{\mathbb{H}};\\
0 & \text{otherwise.}
\end{array}
\right.
\end{align}
Since $||\mu_\#||_\infty<1$, up to M\"obius transformation, there is a unique homeomorphism $\psi_{\mu_\#}:\hat{\mathbb{C}}\rightarrow\hat{\mathbb{C}}$ satisifying the Beltrami equation for $\mu_\#$.\medskip

Consider an arbitrary $\gamma\in\pi_1(N)$, if $\gamma\in\pi_1(dN)$, then $\mu_\#\circ (\rho_0(\gamma))\equiv\mu$
\begin{itemize}
\item
on $\mathbb{H}$ because $\mu$ is $\pi_1(dN)$-invariant;
\item
on $\overline{\mathbb{H}}$ because $A_0\cdot\rho_0(\gamma)\cdot A_0^{-1}$ is in $\rho_0(\pi_1(dN))$.
\end{itemize}
Similarly, if $\gamma\in\pi_1(N)-\pi_1(dN)=\alpha_0^{-1}\cdot\pi_1(dN)$, then $\mu_\#\circ (\rho_0(\gamma))\equiv\mu$ 
\begin{itemize}
\item
on $\mathbb{H}$ because $A_0\cdot\rho_0(\gamma)$ is in $\rho_0(\pi_1(dN))$;
\item
on $\overline{\mathbb{H}}$ because $\rho_0(\gamma)\cdot A_0^{-1}$ is in $\rho_0(\pi_1(dN))$.
\end{itemize}
Thus, for any $\gamma\in\pi_1(N)$, the maps $\psi_{\mu_\#}$ and $\psi_{\mu_\#}\circ\rho_0(\gamma)$ both satisfy the Beltrami equation. The uniqueness of solutions to the differential equation, up to M\"obius transformation, tells us that there is an element $A_\gamma\in\mathrm{PSL}(2,\mathbb{C})$ such that 
\begin{align}
A_\gamma\circ\psi_{\mu_\#}\equiv\psi_{\mu_\#}\circ\rho_0(\gamma).\label{equivariance}
\end{align}
Define a map $\rho_\mu:\pi_1(N)\rightarrow\mathrm{PSL}(2,\mathbb{C})$ that takes $\gamma$ to $A_\gamma$. The fact that this is a representation is due to \eqref{equivariance}. Since $\psi_{\mu_\#}$ is a homeomorphism, we see that $\rho$ is a quasifuchsian representation and hence $\Phi$ is surjective.\medskip

To see that $\Phi$ is injective, consider (equivalently normalized) quasifuchsian representations $\rho_1,\rho_2$ such that $\Phi(\rho_1)\equiv\Phi(\rho_2)$. By Bers' original arguments, the two respective conformal ends of the quasifuchsian representations $d\rho_1$ and $d\rho_2$ are equivalent, and hence $d\rho_1\equiv d\rho_2$ as representations and the Jordan curves $C_{\rho_1}, C_{\rho_2}$ are equivalent. This in turn means that the attracting and repelling fixed points of $\rho_1(\alpha_0)$ and $\rho_2(\alpha_0)$ must be the same. Moreover, since $\alpha_0^2\in\pi_1(dN)$, the real part of the translation lengths for $\rho_1(\alpha_0)$ and $\rho_2(\alpha_0)$ must agree and their imaginary components are equivalent up to addition by either $0$ or $i\pi$. Howover, we know that these two transformations exchange the two components of $\hat{\mathbb{C}}-C_{\rho_1}$ and this ensures that $\rho_1(\alpha_0)=\rho_2(\alpha_0)$. Therefore, the representations $\rho_1$ and $\rho_2$ are equivalent on $\alpha_0\cdot\pi_1(dN)$ and hence on all of $\pi_1(N)$.\medskip

We next show that $\Phi^{-1}$ is a holomorphic map. Putting this with the bijectivity of $\Phi^{-1}$ and Hartog's theorem ensures the biholomorphicity of $\Phi$. Let $\mu^t(\cdot)$ be a complex analytic family of Beltrami differentials in $\mathcal{T}(dN)$ around $\mu=\mu^0$. By examining equation~\eqref{eq:beltrami}, we see that $\mu^t_\#$ is also a complex analytic family of Beltrami differentials. Then, by the holomorphic dependence of the family $\{\psi^t:=\psi_{\mu^t_\#}\}$ of quasiconformal mappings (see, for example, the immediate Corollary to Theorem~4.37 of\cite{imayoshitaniguchi}), we know that for any $z\in\hat{\mathbb{C}}$, the point $\psi^t(z)\in\hat{\mathbb{C}}$ varies holomorphically with respect to $t\in\mathbb{C}$. We also know from the bijectivity of $\Phi$ that
\begin{align}
\Phi^{-1}(\mu^t)=\rho_{\mu^t}=\psi_{\mu^t_\#}\circ\rho_0\circ\psi_{\mu^t_\#}^{-1}.
\end{align}
To show that $\Phi^{-1}$ is holomorphic, it suffices to show that $\rho_{\mu^t}$ varies holomorphically with respect to $t$. Now, given any non-peripheral element $\gamma\in\pi_1(N)$, the cross-ratio
\begin{align}
\left(\rho_{\mu^t}(\gamma)^+,\rho_{\mu^t}(\gamma)^-;z,\rho_{\mu^t}(\gamma)\cdot z\right)\text{, of }
\end{align}
\begin{itemize}
\item
the attracting fixed point $\rho_{\mu^t}(\gamma)^+$ of $\rho_{\mu^t}(\gamma)$,
\item
the repelling fixed point $\rho_{\mu^t}(\gamma)^-$ of $\rho_{\mu^t}(\gamma)$, 
\item
an arbitrary point $z$ away from $\rho_{\mu^t}(\gamma)^\pm$ and 
\item
its image $\rho_{\mu^t}(\gamma)\cdot z$ under the action of $\rho_{\mu^t}(\gamma)$,
\end{itemize}
varies holomorphically with respect to $t$. This cross-ratio suffices to recover the trace of $\rho_{\mu^t}(\gamma)$ up to sign, and since $\mathcal{T}(dN)$ is a simply connected domain, we may choose the correct sign for the trace by making the desired choice on the Fuchsian locus and analytically continuing over the entire character variety. By Hartog's theorem, the composition of $\Phi^{-1}$ and any trace function $\mathrm{tr}\circ\rho(\gamma)$ (for non-peripheral $\gamma$) is a holomorphic function on $\mathcal{T}(dN)$, and since trace functions give global coordinates on the character variety $\mathcal{QF}(N)$, we obtain the desired holomorphicity of $\Phi^{-1}$ and hence the agreement of complex analytic structure on $\mathcal{QF}(N)$ and $\mathcal{T}(dN)$.\medskip

Finally, we show that the Fuchsian locus $\mathcal{T}(N)\subset\mathcal{QF}(N)=\mathcal{T}(dN)$ is a maximal dimensional totally real analytic submanifold. To clarify, we need to show that $\mathcal{T}(N)$ is half-dimensional and that for every point $x\in\mathcal{T}(N)$, we have \[
T_x\mathcal{T}(N)\cap J\left(T_x\mathcal{T}(N)\right)=\{0\}, 
\]
where $J$ denotes the almost complex structure on $\mathcal{QF}(N)$ (see, for example, Definition~5.2 of\cite{loustau}). To show this, we consider the antiholomorphic involution $\iota$ on $\mathcal{T}(dN)$ given by flipping the underlying orientation of $dN$. This action, when interpreted as an action on $\mathcal{QF}(N)=\mathcal{T}(dN)$, is equivalent to precomposing a given Beltrami differential $\mu\in\mathcal{QF}(N)$ by the complex conjugation map on $\hat{\mathbb{C}}$. The fixed-point locus of $\iota$ is precisely the Fuchsian locus $\mathcal{T}(N)$. By a general characterization of maximal totally real analytic submanifolds (see, for example, Prop~6.3 of\cite{loustau}), we conclude that $\mathcal{T}(N)$ is a connected half-dimensional totally real analytic submanifold of $\mathcal{QF}(N)$. 
\end{proof}

\begin{note}
By combining Proposition~\ref{thm:double} with the classical quasifuchsian character variety obtained from Bers' simultaneous uniformization theorem, we see that Proposition~\ref{thm:double} holds true even after replacing $N$ with a (possibly disconnected) complete finite-area hyperbolic surface $F$ and $dN$ with an oriented double cover $dF$ of $F$. 
\end{note}

\begin{cor}[Characterization of biholomorphisms of $\mathcal{QF}(N)$]
\label{thm:corroyden}
The (orientation-preserving) mapping class group $\Gamma^+(dN)$ of the oriented double cover $dN$ is the group of biholomorphisms of $\mathcal{QF}(N)$; except when $N$ is Dyck's surface (the sphere with three cross-caps), in which case the automorphism group is $\Gamma^+(dN)$ modulo the $\mathbb{Z}_2$ generated by the hyperelliptic involution on $dN$.
\end{cor}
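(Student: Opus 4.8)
The plan is to carry the question across the biholomorphism furnished by Theorem~\ref{thm:double} and then invoke the rigidity theorems of Royden and Earle--Kra on automorphisms of Teichm\"uller space. Theorem~\ref{thm:double} provides a biholomorphism from $\mathcal{QF}(N)$ onto $\mathcal{T}(dN)$ carrying its \emph{standard} complex structure --- this is exactly the point of the orientation convention for $dN$ recorded just after Theorem~\ref{thm:double} --- so the group of biholomorphic self-maps of $\mathcal{QF}(N)$ is canonically isomorphic to $\mathrm{Aut}_{\mathbb{C}}(\mathcal{T}(dN))$, and it suffices to compute this group.

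The orientation-preserving mapping class group $\Gamma^+(dN)$ acts on $\mathcal{T}(dN)$ by biholomorphisms (change of marking), whereas an orientation-reversing mapping class conjugates the complex structure and so acts \emph{anti}-holomorphically; indeed the orientation-reversal of $dN$ is precisely the antiholomorphic involution $\iota$ used in the proof of Theorem~\ref{thm:double}. Royden's theorem, and its extension to punctured surfaces by Earle and Kra, then asserts that the homomorphism $\Gamma^+(dN)\to\mathrm{Aut}_{\mathbb{C}}(\mathcal{T}(dN))$ is surjective and, once $dN$ has high enough complexity, injective; its kernel is the order-two group generated by the hyperelliptic involution when $dN$ is the closed genus-two surface $S_2$, and in the remaining exceptional cases --- all of very small dimension, e.g.\ $\dim_{\mathbb{C}}\mathcal{T}(dN)\le1$, where $\mathcal{T}(dN)\cong\mathbb{H}$ and the automorphism group is $\mathrm{PSL}(2,\mathbb{R})$ --- the conclusion needs adjustment.

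It then remains to see which orientable doubles are exceptional. Punctures lift to punctures, so $dN$ is closed exactly when $N$ is, and $\chi(dN)=2\chi(N)$; for a closed non-orientable surface $N=N_k$ one has $dN\cong S_{k-1}$. Thus $dN\cong S_2$ precisely when $N=N_3$ is Dyck's surface, while $dN$ has genus $\ge3$, and so is non-exceptional by Royden, whenever $N$ is closed with $\chi(N)\le-2$; similarly the cusped doubles of larger complexity are non-exceptional. Hence, for $N$ other than Dyck's surface, the homomorphism above is an isomorphism and the biholomorphism group of $\mathcal{QF}(N)$ is $\Gamma^+(dN)$; for $N$ Dyck's surface the action of $\Gamma^+(S_2)$ on $\mathcal{T}(S_2)$ has kernel the $\mathbb{Z}_2$ generated by the hyperelliptic involution --- invoking the classical fact that every genus-two Riemann surface carries its hyperelliptic involution as a conformal automorphism, so this mapping class acts trivially on $\mathcal{T}(S_2)$ --- giving the stated quotient.

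The one substantive input, which I would cite as a black box, is the Royden--Earle--Kra rigidity theorem; the rest is bookkeeping, with three points to watch. First, one must confirm that the complex structure pulled back from $\mathcal{QF}(N)$ agrees with the standard one on $\mathcal{T}(dN)$, so that Royden--Earle--Kra applies verbatim --- this is arranged by the orientation choice after Theorem~\ref{thm:double}. Second, one must observe that orientation-reversing mapping classes act anti-holomorphically, so that the \emph{extended} mapping class group does not appear. Third, one must run through the finite list of exceptional low-complexity surfaces and either rule each out as an orientable double $dN$ or, for those that do occur --- e.g.\ the four-punctured sphere is the orientable double of the twice-punctured projective plane --- treat them separately (restricting the statement if need be). The only genuinely non-formal phenomenon, and the reason Dyck's surface is singled out, is that the hyperelliptic involution of $S_2$ lies in the kernel of the mapping-class action on Teichm\"uller space.
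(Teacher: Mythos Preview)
Your approach is essentially the paper's: transport the question through the biholomorphism of Theorem~\ref{thm:double} and invoke Royden's theorem to identify $\mathrm{Aut}_{\mathbb{C}}(\mathcal{T}(dN))$ with $\Gamma^+(dN)$ (modulo the hyperelliptic involution in genus two). The paper's proof is the two-line version of exactly this, citing Theorem~\ref{thm:double} and Royden and noting that $dN\cong S_2$ precisely when $N$ is Dyck's surface.

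Where you go further than the paper is in your third point: you correctly flag that the Royden--Earle--Kra list of exceptional surfaces contains more than just $S_2$, and that some of these \emph{do} arise as orientable doubles of non-orientable surfaces with $\chi<0$ --- for instance $dN\cong S_{0,4}$ when $N$ is the twice-punctured projective plane, and $dN\cong S_{1,2}$ when $N$ is the once-punctured Klein bottle. The paper's proof does not address these, so your care here is warranted rather than excessive; the stated corollary, read literally, is silent on these low-complexity cusped cases.
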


\begin{proof}
This is an immediate consequence of Proposition~\ref{thm:double} and Royden's theorem, which asserts that the automorphism group of $\mathcal{T}(dN)$ is the mapping class group $\Gamma^+(dN)$; except when $dN$ is the genus $2$ oriented closed surface (and hence $N$ is Dyck's surface), in which case we need to take $\Gamma^+(dN)$ modulo the hyperelliptic involution.
\end{proof}

\begin{cor}
Elements within the mapping class group $\Gamma^\pm(N)$ act on $\mathcal{QF}(N)$ either biholomorphically or anti-biholomorphically. In particular, the index $2$ (normal) subgroup $\Gamma^+(N)$ of $\Gamma^\pm(N)$ which acts biholomorphically on $\mathcal{QF}(N)$ is also known as the \emph{twist group} -- the subgroup generated by Dehn-twists along $2$-sided curves.
\end{cor}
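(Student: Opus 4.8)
\emph{The plan} is to transport the $\Gamma^\pm(N)$-action to $\mathcal{T}(dN)$ through Theorem~\ref{thm:double}, and to detect the complex-analytic type of each mapping class from the orientation behaviour of a lift to the double. First I would set up the lifting: the subgroup $\pi_1(dN)\le\pi_1(N)$ is the kernel of the orientation character $w_1\colon\pi_1(N)\to\mathbb{Z}/2$, hence characteristic, so every $f\in\Gamma^\pm(N)$ restricts to an automorphism $f_*|_{\pi_1(dN)}$, which by Dehn--Nielsen--Baer for $dN$ is realised by a mapping class $\widetilde f$ of $dN$; this lift is canonical up to composition with the orientation-reversing deck involution $\iota$.

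Next I would unwind the construction of the biholomorphism $\Phi\colon\mathcal{QF}(N)\xrightarrow{\ \sim\ }\mathcal{T}(dN)$ from the proof of Theorem~\ref{thm:double}. Since $\Phi(\rho)$ records the marked conformal structure carried by the distinguished conformal end of $\rho$, precomposing $\rho$ by $f_*^{-1}$ twists that marking by $\widetilde f$. When $\widetilde f$ is orientation-preserving this is exactly the usual mapping-class action of $\widetilde f$ on $\mathcal{T}(dN)$, which is biholomorphic; when $\widetilde f$ is orientation-reversing the twisted marking reverses orientation and hence names a point of the conjugate Teichm\"uller space, so the resulting self-map of $\mathcal{T}(dN)$ is anti-biholomorphic --- this is the very phenomenon that makes orientation reversal of $dN$ act anti-holomorphically in the proof of Theorem~\ref{thm:double}. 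Consequently each $f$ acts biholomorphically or anti-biholomorphically, and the biholomorphically-acting elements form a subgroup $\Gamma^+(N)\le\Gamma^\pm(N)$ of index at most two, namely the kernel of the homomorphism recording this type.

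It then remains to identify $\Gamma^+(N)$ with the twist group. If $c\subset N$ is a $2$-sided simple closed curve then $[c]\in\pi_1(dN)$, so $c$ lifts to a disjoint pair $c_1\sqcup c_2\subset dN$ and the Dehn twist $T_c$ lifts to $T_{c_1}T_{c_2}$, an orientation-preserving mapping class of $dN$; hence every Dehn twist along a $2$-sided curve, and so the whole twist group, acts biholomorphically and lies in $\Gamma^+(N)$. For the reverse inclusion I would invoke the classical fact that $\Gamma^\pm(N)$ is generated by the twist group together with a single crosscap slide $Y$: since $Y^2$ is a Dehn twist along the boundary curve of a one-holed Klein bottle it belongs to the twist group, so the twist group has index at most two, and a direct check that $Y$ lifts to an orientation-reversing mapping class of $dN$ shows $Y\notin\Gamma^+(N)$, forcing the index to be exactly two. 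Thus $\Gamma^+(N)$ is the twist group; its index-$2$ normality in $\Gamma^\pm(N)$ is then immediate.

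\emph{The main obstacle} is the middle step. Because a mapping class of $N$ has two lifts to $dN$, differing by the orientation-reversing deck involution and therefore of opposite orientation type, one must check that the statement ``$f$ acts biholomorphically'' is well posed. Resolving this comes down to tracking the normalisation and end-selection conventions built into $\Phi$ in the proof of Theorem~\ref{thm:double}, and verifying that replacing one lift by the other is precisely compensated by the accompanying switch of conformal end --- equivalently, that the discrepancy between the two lifts is exactly the anti-holomorphic involution of $\mathcal{T}(dN)$ exhibited there. Once this bookkeeping is pinned down, the only remaining computation is the elementary verification that a crosscap slide lifts to an orientation-reversing homeomorphism of $dN$; the purely group-theoretic inputs (Dehn--Nielsen--Baer, and the generation of mapping class groups of non-orientable surfaces by Dehn twists and a crosscap slide) together with the standard (anti-)holomorphicity of mapping-class actions on Teichm\"uller space are off the shelf.
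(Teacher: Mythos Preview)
Your proposal follows essentially the same strategy as the paper's proof: transport the $\Gamma^\pm(N)$-action through the biholomorphism $\mathcal{QF}(N)\cong\mathcal{T}(dN)$ of Theorem~\ref{thm:double}, detect holomorphicity via the orientation type of a lift to $dN$, verify that Dehn twists along $2$-sided curves lift orientation-preservingly while a crosscap slide lifts orientation-reversingly, and conclude via the index-$2$ property of the twist subgroup. The paper argues identically, only more tersely: it simply asserts an embedding $\Gamma^\pm(N)\hookrightarrow\Gamma^\pm(dN)$ and checks agreement of the two actions on the Fuchsian locus, then cites (via \cite{szepietowski}) both that crosscap slides lift orientation-reversingly and that the twist group has index~$2$.

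Two small differences are worth noting. First, you give a more self-contained argument for index~$2$ (generation by twists plus one crosscap slide, together with $Y^2$ being a boundary twist), whereas the paper simply quotes the index as known. Second, you explicitly flag the lift-choice ambiguity---that each $f$ has two lifts of opposite orientation type---as the main obstacle. The paper elides this entirely by speaking of ``the'' embedding $\Gamma^\pm(N)\hookrightarrow\Gamma^\pm(dN)$; you correctly observe that well-posedness of the holomorphicity dichotomy requires pinning down which lift realises the $\mathcal{QF}(N)$-action, and that this hinges on the end-selection convention in $\Phi$. You do not actually carry out that bookkeeping, but neither does the paper; both proofs rest on the same unverified compatibility, and your sketch is at least candid about where the work lies.
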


\begin{proof}
Homeomorphisms on $N$ lift to homeomorphisms on $dN$ and this embeds the mapping class group $\Gamma^\pm(N)$ as a subgroup of the (possibly orientation-reversing) mapping class group $\Gamma^\pm(dN)$ of the oriented double cover $dN$. First note that the action of $\Gamma^\pm(N)$ on $\mathcal{T}(dN)$, regarded as a subgroup of $\Gamma^\pm(dN)$, is precisely the action of $\Gamma^\pm(N)$ on $\mathcal{QF}(N)=\mathcal{T}(dN)$. This is easy to see on the Fuchsian locus, and hence holds true in general because of the topological nature of this action. Since $\Gamma^+(dN)$ acts holomorphically on $\mathcal{T}(dN)$ and $\Gamma^-(dn)=\Gamma^\pm(dN)-\Gamma^+(dN)$ acts antiholomorphically, we obtain the holomorphic/antiholomorphic nature of the action of $\Gamma^\pm(N)$.\medskip

Next note that cross-cap slides (see\cite{szepietowski}) lift to orientation-reversing mapping classes, and so $\Gamma^\pm(N)$ does not embed as a subgroup of $\Gamma^+(dN)$. In particular, this means that the subgroup $\Gamma^\pm(N)\cap\Gamma^+(dN)$ of holomorphically acting mapping classes has index at least $2$ in $\Gamma^\pm(N)$. However, the twist group $\Gamma^+(N)$ is a subgroup of $\Gamma^\pm(N)\cap\Gamma^+(dN)$ because Dehn twists along $2$-sided curves lift to orientation-preserving mapping classes. Since $\Gamma^+(N)$ has index $2$ in $\Gamma^\pm(N)$, we conclude that the twist group $\Gamma^+(N)$ \emph{is} the holomorphic subgroup $\Gamma^\pm(N)\cap\Gamma^+(dN)$.
\end{proof}

\subsection{Complex lengths}
\label{sec:hyperbolic}

\subsubsection{For quasifuchsian representations}
Theorem~\ref{thm:mcshane} of this paper regarding quasifuchsian representations are stated in terms of the complex lengths $\ell_\gamma$ of curves $\gamma$. Given a quasifuchsian representation $\rho$, the real component of the \emph{complex length} $\ell_\gamma(\rho)$ of a curve $\gamma$ is defined as the translation length 
\begin{align}
\mathrm{Re}(\ell_{\gamma}(\rho)):=\inf_{x\in\mathbb{H}^3} d_{\mathbb{H}^3}(x,\rho(\gamma)\cdot x)\label{eq:translationlength}
\end{align}
of $\rho(\gamma)$. For non-peripheral $\gamma$, this is equivalent to the length of the unique geodesic representative of $\gamma$ in $\mathbb{H}^3/\rho(\pi_1(F))$. When $\rho(\gamma)$ is loxodromic (including hyperbolic), the imaginary component $\mathrm{Im}(\ell_{\gamma}(\rho))$ of the complex length $\ell_\gamma(\rho)$ is defined in terms of the rotation angle $\theta+2\pi\mathbb{Z}\in (\mathbb{R}/2\pi\mathbb{Z})$ of the loxodromic transformation $\rho(\gamma)$ around its invariant axis. If $\gamma$ is a $2$-sided curve, then $\mathrm{Im}(\ell_\gamma(\rho)):=\theta+2\pi\mathbb{Z}$. If $\gamma$ is a $1$-sided curve, then $\mathrm{Im}(\ell_\gamma(\rho)):=\theta-\pi+2\pi\mathbb{Z}$. This normalization for the complex length of $1$-sided geodesics $\gamma$ by subtracting $i\pi$ yields the unique holomorphic function $\ell_\gamma:\mathcal{QF}(N)\rightarrow\mathbb{C}/2i\pi\mathbb{Z}$ that agrees with the translation length of $\gamma$ on the Fuchsian locus. And if $\gamma$ is parabolic (this arises when $\gamma$ is peripheral) we set its imaginary component to be $0$, and hence its total complex length is $0$.\medskip

We have defined complex geodesic length $\ell_\gamma(\rho)$ to be functions from $\mathcal{QF}(N)$ to $\mathbb{C}/2\pi i\mathbb{Z}$, but this is insufficient for our purposes, as we always exponentiate half of these lengths in our identities, thereby leading to an ambiguity of sign in the summands. Thankfully, as we are dealing with quasifuchsian representations, it is possible to invoke the simply-connectedness of $\mathcal{QF}(N)$ (Proposition~\ref{thm:double}) to lift these length functions to maps of the form $\ell_\gamma:\mathcal{QF}(N)\rightarrow\mathbb{C}$ via analytic continuation, such that $\ell_\gamma$ is equal to the translation length of $\gamma$ on the Fuchsian locus.\medskip

We now provide a more algebraic formulation of the complex length function for quasifuchsian representations. Fix a lift of $\rho_0$ to a $\mathrm{GL}(2,\mathbb{C})$ representation $\hat{\rho}_0:\pi_1(N)\rightarrow\mathrm{GL}(2,\mathbb{C})$ so that $2$-sided curves have determinant $1$ and $1$-sided curves have determinant $-1$ and use the simply connectedness of $\mathcal{QF}(N)$ to continuously extend this lift over all of $\mathcal{QF}(N)$. Having done so, we may define complex length as follows:

\begin{dfn}[Complex length for quasifuchsian representations]
\label{dfn:qfcomplexlength}
When $\gamma$ is 1-sided, its complex length is defined to be $2\mathrm{arcsinh}\left(\left|\frac{1}{2}\mathrm{tr}\circ\hat{\rho}(\gamma)\right|\right)$ of the trace of $\hat{\rho}(\gamma)$ along the Fuchsian locus, and the analytically extension of this function elsewhere on $\mathcal{QF}(N)$; when $\gamma$ is a $2$-sided geodesic, its complex length $\ell_\gamma$ is defined to be $2\mathrm{arccosh}\left(\left|\mathrm{tr}\circ\hat{\rho}(\gamma)\right|\right)$ along the Fuchsian locus, and the analytic extension of this function everywhere-else.
\end{dfn}

\begin{note}
The fact that our previous geometric description and the above algebraic definition agree may be shown using the holomorphic identity theorem (see, for example, Proposition~6.5 of\cite{loustau}): both the geometrically defined length functions and its algebraic counterpart yield holomorphic functions on $\mathcal{QF}(N)$ and agree on the Fuchsian locus -- a maximal dimensional totally real analytic submanifold, and therefore must be the same function.
\end{note}

\subsubsection{For pseudo-Anosov mapping Klein bottle}

The definition of complex lengths given in Definition~\ref{dfn:qfcomplexlength} also extends to holonomy representations of pseudo-Anosov mapping Klein bottle, and are utilized in Theorem~\ref{thm:zero} and Theorem~\ref{thm:kleinbottle}. To begin with, we may restrict the holonomy representation $\phi:\pi_1(K_{(d\varphi,\iota)})\to\mathrm{PSL}(2,\mathbb{C})$ for a pseudo-Anosov mapping Klein bottle $K_{(d\varphi,\iota)}$ to the fundamental group $\pi_1(N)$ of the (non-canonical) singular surface fiber homeomorphic to $N$. We can further restrict to $\pi_1(dN)\leq \pi_1(N)$, to define a representation $d\phi=\phi|_{\pi_1(dN)}$ which is a limit of quasifuchsian representations of $\pi_1(dN)$. Lemma~3.8 (along with Claim~3.9 and Definition~3.10) of\cite{amsgroups} suffice to ensure that complex lengths are well-defined for simple curves on $dN$. This in turn means that the complex lengths of $2$-sided simple curves on $N$ are well-defined, because they lift to two distinct simple closed curves on $dN$ with the same complex length --- the fact that these two complex lengths are the same owes to them being related by the action of an orientation-preserving involution on $\mathbb{H}^3/d\phi(\pi_1(dN))$. For an arbitrary $1$-sided simple curve $\alpha\in\pi_1(N)$, its double $\alpha^2\in\pi_1(dN)$ lifts to a simple closed curve on $dN$ and hence has a well-defined complex length. We define $\ell_\alpha(\rho)$ as $\frac{1}{2}\ell_{d\rho}(\alpha^2)$ to produce a holomorphic length function $\ell_\alpha:\mathcal{QF}(N)\to\mathbb{C}$ on quasifuchsian space which evaluates to standard hyperbolic length on the Fuchsian locus.

\subsubsection{For transflected-quasifuchsian representations}
\label{sec:tqflength}
Theorem~\ref{thm:mcshanetransf} regards transflected-quasifuchsian representations. However, the notion of complex length we require in this context is less well-defined. Clearly, translation lengths \eqref{eq:translationlength} are still well-defined and constitute the real component of the complex length $\ell_\gamma$ of a curve $\gamma$ with respects to a transflected-quasifuchsian representation $\rho:\pi_1(N)\to O^+(1,3)$ as for the quasifuchsian case. The imaginary part is more problematic, however, and we first consider the case when $\alpha\in\pi_1(N)-\pi_1(dN)$. In this case, the isometry $\rho(\alpha)$ is going to be orientation-reversing and non-periheral. Therefore, its double $\alpha^2\in\pi_1(dN)$ must correspond to a non-peripheral orientation-preserving isometry $\rho(\alpha^2)$, which is to say that $\rho(\alpha^2)$ is loxodromic (including hyperbolic). This in turn implies that $\rho(\alpha)$ must be a transflection as it cannot have an fixed points in $\mathbb{H}^3$ and hence cannot be a reflection, an improper reflection or a point inversion. However, this then means that $\rho(\alpha^2)\in \mathrm{PSL}(2,\mathbb{C})=SO^+(1,3)$ is a hyperbolic transformation and any representation theoretically compatible notion of complex length for $\rho(\alpha^2)$ must have imaginary component in $\pi i\mathbb{Z}$. Since this is a discrete set and $\mathcal{TGF}(N)$ is simply connected, there is only one choice which result in a continuous function $\ell_\alpha:\mathcal{TQF}(N)\to\mathbb{C}$ whilst evaluating to the usual length function on the Fuchsian locus, namely $\ell_\alpha$ is always real.\medskip

Now consider $2$-sided curves $\gamma\in\pi_1(dN)\leq\pi_1(N)$. We know that each curve $\gamma$ lifts to two curves $\gamma_1,\gamma_2$ on $dN$. The complex lengths for $\gamma_1$ and $\gamma_2$ are both well-defined but are different. In particular, since $\gamma_1$ and $\gamma_2$ are related an orientation-reversing involution on $\mathbb{H}^3/d\rho(\pi_1(dN))$, their complex length are related by complex conjugation. Fortunately, the summands for Theorem~\ref{thm:mcshanetransf} depend only on $\mathrm{Re}\ell_{\gamma_1}=\mathrm{Re}\ell_{\gamma_2}$ and $|\mathrm{Im}\ell_{\gamma_1}|=|\mathrm{Im}\ell_{\gamma_2}|$. Specifically, this can be seen from the fact that:
\begin{align*}
\mathrm{Re}
\left(e^z+(-1)^k\right)^{-1}
=
\mathrm{Re}
\left(e^{\bar{z}}+(-1)^k\right)^{-1}
&=\tfrac{1}{2}\left(
\left(e^z+(-1)^k\right)^{-1}
+\left(e^{\bar{z}}+(-1)^k\right)^{-1}
\right)\\
&=
\frac{e^x\cos|y|+(-1)^k}
{e^{2x}+2(-1)^k e^x\cos|y|+1}\text{, where }z=x+yi.
\end{align*}

\subsubsection{For pseudo-Anosov mapping tori}

Finally, we need to contend with a notion of complex length for the statement of Theorem~\ref{thm:zeropatori}. Again, the expression of the summands means that we only need the length to be well-defined up to complex conjugation. The strategy here is essentially what we have seen so far: 
\begin{itemize}
\item
restrict the holonomy representation $\phi:\pi_1(M_\varphi)\to\mathrm{PSL}(2,\mathbb{C})$ for a pseudo-Anosov mapping torus $M_\varphi$ to the fundamental group $\pi_1(N)$ of the circle-bundle fiber $N$;
\item
further restrict to $\pi_1(dN)\leq \pi_1(N)$, to define a representation $d\phi=\phi|_{\pi_1(dN)}$ which is a limit of quasifuchsian representations of $\pi_1(dN)$;
\item
invoke Lemma~3.8 (along with Claim~3.9 and Definition~3.10) of\cite{amsgroups} to ensure that complex lengths are well-defined for simple curves on $dN$;
\item
use the same arguments as in \S\ref{sec:tqflength} to show that the lengths of $1$-sided simple closed geodesics are well-defined real functions equal to its translation length and that the lengths of $2$-sided simple closed geodesics $\gamma$ are well-defined up to complex conjugation and equal the complex lengths of either lift of $\gamma$ in $dN$.
\end{itemize}

\section{Simple geodesics on $N$}

Let $m_p\in\pi_1(N)$ denote a peripheral homotopy class going around cusp $p$ once. Normalize every $\rho\in\mathcal{QF}(N)$ so that $\rho(m_p)=\pm\left[\begin{smallmatrix}1&1\\ 0 &1\end{smallmatrix}\right]$. Consider the holonomy representation $\rho_0$ for $N$. The restriction of its limit curve $C_{\rho_0}$ to $\hat{\mathbb{C}}-\{\infty\}$ is precisely the real axis $\mathbb{R}\subset\hat{\mathbb{C}}$, and $\mathbb{R}/\rho_0(m_p)=\mathbb{R}/\mathbb{Z}$ canonically identifies with the set $\mathbb{S}^1_p$ of complete (oriented) geodesics in $N$ emanating from $p$. On the other hand, Proposition~\ref{thm:double} tells us that for an arbitrary quasifuchsian representation $\rho$, there are quasiconformal maps which $\pi_1(N)$-equivariantly identify $C_{\rho_0}$ with $C_\rho$ and hence identify $(C_\rho-\{\infty\})/\rho(m_p)$ with $\mathbb{S}^1_p$. We pay particular attention to two subsets of $\mathbb{S}^1_p$:
\begin{itemize}
\item
$\vec{\triangle}$: the set of oriented bi-infinite simple geodesics on $N$ with both source and sink based at $p$;
\item
$\vec{\mathcal{G}}$: the set of oriented simple complete geodesics on $N$ with source based at $p$.
\end{itemize}
The set $\vec{\triangle}$ is contained in $\vec{\mathcal{G}}$, and we may topologize both of these spaces via the subspace topology on $\mathbb{S}^1_p$.

\begin{dfn}[Ideal geodesics]
\label{defn:sidedness}
Let $\triangle$ denote the set of unoriented ideal geodesics on $N$ with both ends at cusp $p$. We say that that an ideal geodesic $\sigma$ in $\triangle$ is a \emph{$1$-sided} (or \emph{$2$-sided}) ideal geodesic if, upon filling in the cusp $p$ on the surface $N$, $\sigma$ completes to a 1-sided (resp. 2-sided) curve. We denote the collection of $1$-sided ideal geodesics by $\triangle_1$ and the collection of $2$-sided geodesics by $\triangle_2$.
\end{dfn}

\subsection{Fattening simple geodesics}

Any (simple) ideal geodesic $\sigma\in\triangle$ may be fattened up into an (open) geodesically bordered surface as follows: any sufficiently small $\epsilon$-neighborhood of a $2$-sided $\sigma$ is a pair of pants (Figure~\ref{fig:fattening} -- left), whereas any small $\epsilon$-neighborhood of a $1$-sided $\sigma$ is topological equivalent to a $1$-holed M\"obius band (Figure~\ref{fig:fattening} -- right). Isotoping the boundaries of these $\epsilon$-fattened surfaces until they are geodesically bordered results in elements of $\mathcal{S}_i(N)$ for $i$-sided ideal geodesics $\sigma\in\triangle_i$. This fattening procedure is well-defined as any two sufficiently small $\epsilon$-neighborhoods are related by a deformation retract, and this gives us an injective function $\mathrm{Fat}:\triangle\rightarrow\mathcal{S}(N)$.

\begin{prop}
\label{thm:bijection}
The $\mathrm{Fat}$ map gives a topologically defined bijection between $\mathcal{S}(N)$ and the collection $\triangle$ of (simple) ideal arcs on $N$ with both ends up $p$, such that each of the two curves $\{\alpha,\beta\}\in\mathcal{S}(N)$ is freely homotopic to its corresponding ideal geodesic $\sigma\in\triangle$ when $\alpha,\beta$ and $\sigma$ are regarded as simple closed curves on $N\cup\{p\}$, that is: the surface $N$ with cusp $p$ filled in.
\end{prop}

\begin{figure}[h!]
\centering
\includegraphics[width=0.75\textwidth]{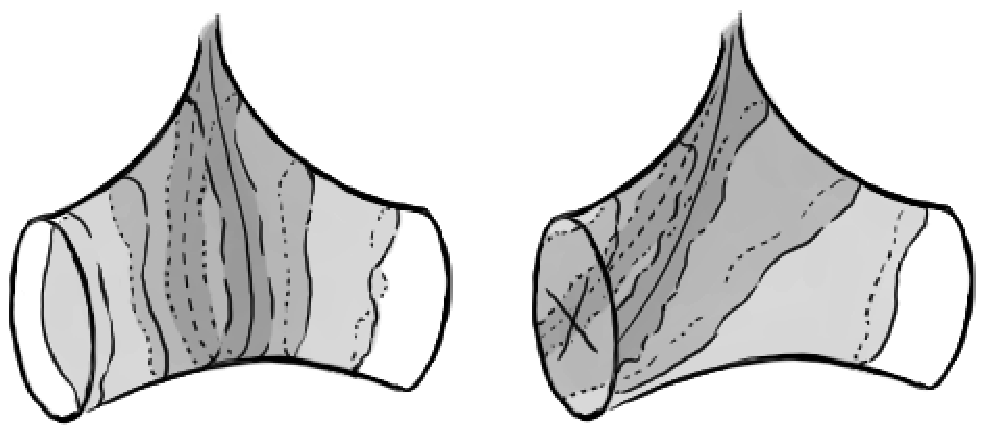}
\caption{(left) fattening a $2$-sided ideal geodesic to a pair of pants; (right) fattening a $1$-sided ideal geodesic to a $1$-holed M\"obius band.}\label{fig:fattening}
\end{figure}

\begin{proof}
The descriptions of $\alpha$ and $\beta$ are simple topological consequences of the fattening procedure and we only prove the statement that $\mathrm{Fat}$ is a bijection. The fact that $\mathrm{Fat}$ is a surjection is clear from Figure~\ref{fig:fattening} and the existence of geodesic representatives for homotopy classes of ideal arcs  on hyperbolic surfaces. The fact that $\mathrm{Fat}$ is an injection on $\triangle_2$ follows from the fact that for every embedded pair of pants $P\in\mathcal{S}_2(N)$ there is a unique simple ideal geodesic, with both cusps up $p$, which lies completely on $P$. For injectivity on $\triangle_1$, we remark that for any embedded $1$-holed M\"obius band $M$ with cusp $p$, there are precisely three (unoriented) simple ideal geodesics on $M$ with both ends going up $p$ (Lemma~\ref{thm:fourteen}). Two of these are 2-sided and hence correspond to elements of $\triangle_2$ and only one is 1-sided. 
\end{proof}

\begin{note}
Thanks to the above result, we may regard elements of $\vec{\triangle}$ triples as $\{\alpha,\beta;\epsilon\}$, where $\{\alpha,\beta\}$ is an element of $\mathcal{S}(N)$ and $\epsilon\in\{-,+\}=\{\pm\}$ (arbitrarily) specifies the orientation of the bi-infinite ideal geodesic.
\end{note}

\begin{note}\label{note:equivariance}
The fattening procedure is a fundamentally topological construction, and hence every homeomorphism $\varphi:N\rightarrow N$ acts equivariantly on $\triangle$ and $\mathcal{S}(N)$ with respect to the fattening map $\mathrm{Fat}:\triangle\rightarrow\mathcal{S}(N)$.
\end{note}

\begin{lem}\label{thm:fourteen}
There are precisely fourteen elements of $\vec{\mathcal{G}}\subset\mathbb{S}^1_p$ on any (open) $1$-holed M\"obius band $M$ containing cusp $p$ and one other geodesic border. Moreover,
\begin{itemize}
\item
these fourteen oriented geodesics are naturally grouped as seven pairs of geodesics, where each pair is related by the reflection involution on $M$.
\item
three of these pairs are of elements of $\vec{\triangle}$ and each pair consists of the same ideal geodesic with its two opposing orientations. The inner pair $\{\lambda^-,\lambda^+\}$ of geodesics are oriented versions of the $1$-sided geodesic $\lambda$ shown in the top left diagram in Figure~\ref{fig:interlace}. The outer two pairs $\{\lambda^-_\alpha,\lambda^+_\alpha\}$ and $\{\lambda^-_\beta,\lambda^+_\beta\}$ are oriented versions of the two $2$-sided ideal geodesics $\lambda_\alpha$ and $\lambda_\beta$ depicted in the bottom left diagram in Figure~\ref{fig:interlace}.
\item
the remaining four pairs are of elements of $\vec{\mathcal{G}}-\vec{\triangle}$ (blue geodesics in Figure~\ref{fig:interlace}) consisting of simple bi-infinite geodesics with one end spiraling to some simple closed geodesic. Specifically, two of the pairs $\{\mu^-_\alpha,\mu^+_\alpha\}$ and $\{\mu^-_\beta,\mu^+_\beta\}$ spiral to the two interior (1-sided) simple geodesics $\alpha,\beta$ on $M$ and two of the pairs $\{\nu^-_\alpha,\nu^+_\alpha\}$ and $\{\nu^-_\beta,\nu^+_\beta\}$ spiral to the (2-sided) non-cusp boundary of $M$.
\item
the four pairs of geodesics in $\vec{\mathcal{G}}-\vec{\triangle}$ and the three pairs of geodesics in $\vec{\triangle}$ interlace each other as elements of $\mathbb{S}^1_p$ as per Figure~\ref{fig:intervals}.
\item
as per Figure~\ref{fig:intervals}, each of the six elements of $\{\lambda^\pm,\lambda^\pm_\alpha,\lambda^\pm_\beta\}\subset\vec{\triangle}$ is adjacent to two open intervals which constitute connected components of $\mathbb{S}^1_p-\vec{\mathcal{G}}$. All twelve such open intervals are distinct.
\end{itemize}
\end{lem}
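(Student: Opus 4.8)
The plan is to reduce the statement to a topological enumeration and then recover the circular order by developing the cusp. On the hyperbolic surface $M$ every isotopy class of properly embedded, simple, complete arc or ray issuing from $p$ contains a unique geodesic representative, so it suffices to count such isotopy classes and to understand how they sit around $p$. Topologically $M$ is a two-holed projective plane $N_{1,2}$ (one hole being the cusp $p$, the other the geodesic border), and the whole argument hinges on this surface being small enough to carry no filling laminations. I would see this by passing to the orientation double cover $dM\cong S_{0,4}$ equipped with its free orientation-reversing deck involution $\tau$, which interchanges the two lifts of $p$ and the two lifts of the border: $\tau$ acts on $\mathcal{PML}(dM)\cong S^1$ as an orientation-reversing involution, hence with exactly two fixed points, and these are the projective classes of the connected lifts $\hat\alpha,\hat\beta$ of the two one-sided simple closed geodesics $\alpha,\beta\subset M$. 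It follows that the only simple closed geodesics of $M$ are $\alpha$, $\beta$ and the border geodesic (the curves $\tilde\alpha\simeq\alpha^{2}$ and $\tilde\beta\simeq\beta^{2}$ having non-simple geodesic representatives), and hence that every simple complete geodesic ray from $p$ inside $M$ either returns to the cusp $p$ or spirals onto one of these three.

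Next I would enumerate. The simple geodesic arcs from $p$ to $p$ lying in $M$ correspond, via the double cover, to simple arcs from a cusp of $dM$ disjoint from their $\tau$-translate, and this disjointness constraint collapses the a priori infinite supply of arcs on $S_{0,4}$ to exactly three: a one-sided arc $\lambda$, whose regular neighbourhood together with a horoball at $p$ is all of $M$, and two two-sided arcs $\lambda_\alpha,\lambda_\beta$ whose fattenings are the two embedded pairs of pants $P_\alpha,P_\beta$ obtained by cutting $M$ along $\alpha$, respectively $\beta$. Oriented both ways these furnish the six elements of $\vec\triangle$ in item 2. For the remaining rays I would count, inside each of $P_\alpha$ and $P_\beta$, the two simple rays from $p$ spiralling onto the non-cusp boundary components; this produces the pairs $\{\mu^{\pm}_\alpha\},\{\mu^{\pm}_\beta\}$ spiralling onto $\alpha,\beta$ and the pairs $\{\nu^{\pm}_\alpha\},\{\nu^{\pm}_\beta\}$ spiralling onto the border, and one checks that these eight rays are pairwise distinct and exhaust all spiralling rays. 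Altogether $6+8=14$, which is the headline count, and items 2 and 3 are precisely the structure generated by this classification.

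For item 1 I would invoke the canonical reflection $r$ of $M$: each hyperbolic pair of pants has a canonical isometric reflection, and a direct computation on $\tilde\alpha\cong\mathbb{R}/\ell\mathbb{Z}$ shows that the reflection of $P_\alpha$ commutes with the half-turn that folds $\tilde\alpha$ onto $\alpha$, so it descends to an isometric involution of $M$. Since $r$ permutes the topologically characterised set of fourteen geodesics, it only remains to read off its orbits: $r$ reverses each of $\lambda,\lambda_\alpha,\lambda_\beta$ and interchanges $\mu^{+}$ with $\mu^{-}$ and $\nu^{+}$ with $\nu^{-}$ in each family, giving the seven pairs. Finally, for the interlacing and the twelve distinct complementary intervals of items 4 and 5, I would develop $M$ at the cusp: lift to $\mathbb{H}^{2}$ with $p$ at $\infty$, so that $\mathbb{S}^1_p$ is the horocyclic circle $\mathbb{R}/\rho_0(m_p)$ and each of the fourteen rays is recorded by the real endpoint of its lift; the cyclic order, and the distinction between a complementary arc that is a gap disjoint from $\vec{\mathcal{G}}$ and one through which geodesics of $N$ leave $M$ across the border, then follows from the intersection pattern of $\lambda,\lambda_\alpha,\lambda_\beta$ together with the fact that each spiralling ray is a one-sided limit of an arc family, exactly as drawn in Figures~\ref{fig:interlace}--\ref{fig:intervals}.

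The main obstacle is completeness of the count: showing there are no further simple complete rays from $p$ inside $M$. This rests on two ingredients -- that a simple ray cannot spiral onto a lamination other than $\alpha,\beta$ or the border, which is exactly where the absence of filling laminations on $M$ (equivalently $\dim\mathcal{PML}(M)=0$) is indispensable, and that the soft count "finitely many" must be sharpened to precisely three ideal arcs and $2+2+4$ spiralling rays, for which the explicit double-cover bookkeeping (or an equally explicit hyperbolic picture of $M$) seems unavoidable; pinning down the exact cyclic order in item 4 is of the same flavour.
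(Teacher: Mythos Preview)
Your approach differs substantively from the paper's. You attack completeness from above, using the lamination-theoretic fact that $M$ (via its double cover $S_{0,4}$) carries no minimal laminations beyond $\alpha,\beta,\gamma$, and then try to enumerate simple rays by the curve they spiral onto. The paper instead works from below: it exhibits the fourteen rays directly, then shows that every remaining direction in $\mathbb{S}^1_p$ launches a self-intersecting geodesic. For the eight complementary intervals adjacent to the two-sided arcs $\lambda^\pm_\alpha,\lambda^\pm_\beta$ this is delegated to McShane's classical pair-of-pants lemma, since those arcs lie inside $P_\alpha$ or $P_\beta$. The genuinely new content is the four intervals flanking the one-sided arc $\lambda^\pm$: there the paper gives an explicit hyperbolic-trigonometric argument, lifting to $\mathbb{H}^2$ and using the glide-reflection along $\tilde\alpha$ to trap the re-emerging ray inside a geodesic triangle that it can only exit by crossing its own earlier segment.

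Your route is more structural and imports more machinery; the paper's is entirely elementary and self-contained. Two points deserve attention. First, the claim that $\tau$ acts on $\mathcal{PML}(S_{0,4})\cong S^1$ as an orientation-reversing involution with fixed set exactly $\{\hat\alpha,\hat\beta\}$ is correct but wants a line of justification (e.g.\ $\tau$ is orientation-reversing on $dM$, hence not in the hyperelliptic kernel, and an involution of $S^1$ with at least two fixed points that is not the identity is a reflection). Second, and more substantively, your enumeration of $\gamma$-spiralling rays by looking only inside $P_\alpha$ and $P_\beta$ does not, as written, exclude a simple ray spiralling to $\gamma$ that meets both $\alpha$ and $\beta$; you are tacitly using that every simple arc from $p$ to $\gamma$ misses one of $\alpha,\beta$. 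This is precisely the sort of statement your final paragraph postpones to ``explicit double-cover bookkeeping (or an equally explicit hyperbolic picture)'', and it is essentially what the paper's glide-reflection argument establishes. So your strategy is sound, but the hard step has been relocated rather than dissolved.
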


\begin{figure}[h!]
\centering
\includegraphics[width=0.8\textwidth]{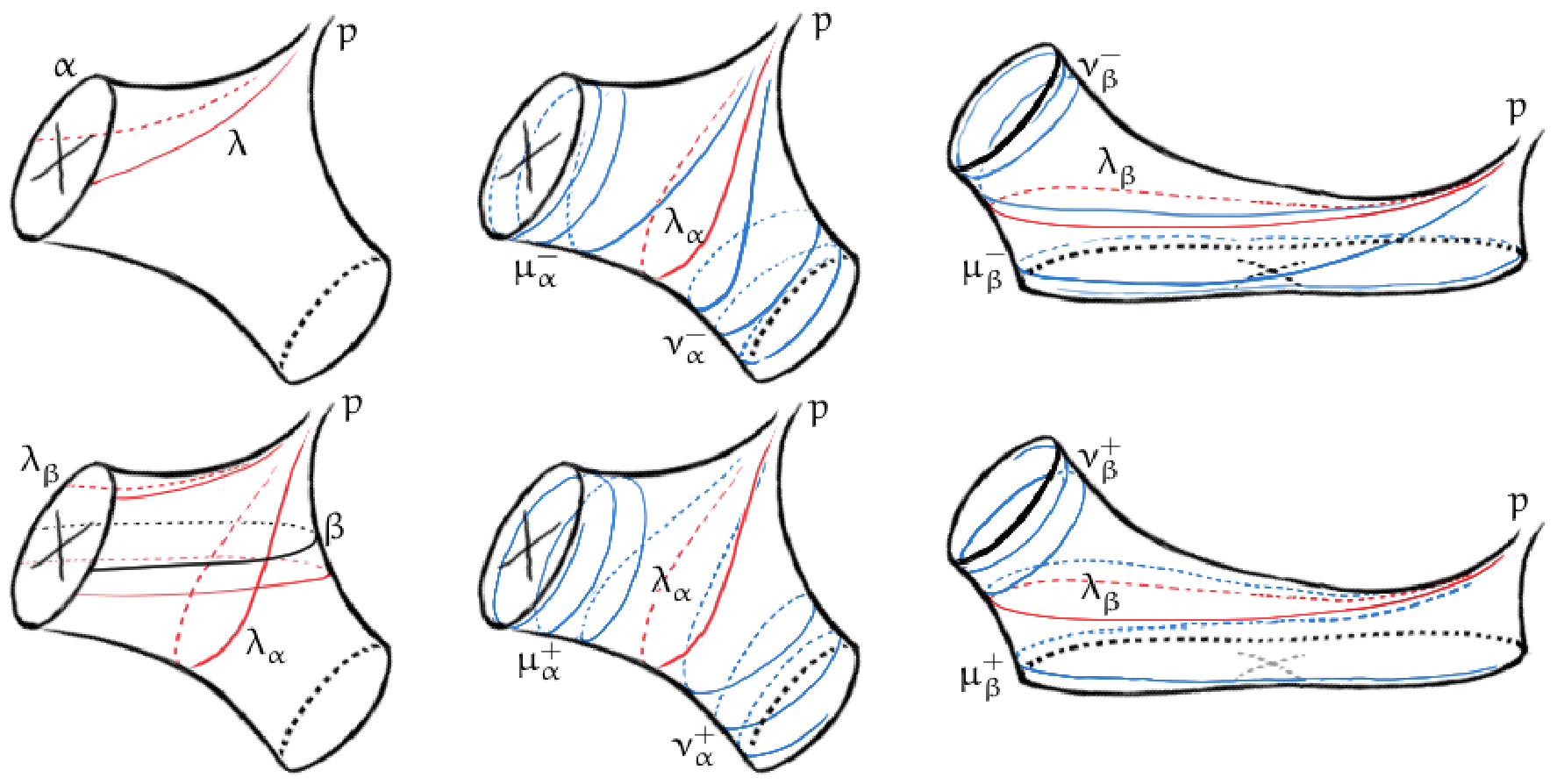}
\caption{(left column) all three unoriented (hence six oriented) simple ideal geodesics on $M$ with both ends up the cusp $p$; (middle column) all the simple ideal geodesics which do not intersect $\alpha$; (right column) all the simple ideal geodesics which do not intersect $\beta$.}\label{fig:interlace}
\end{figure}

\begin{figure}[h!]
\centering
\includegraphics[width=0.4\textwidth]{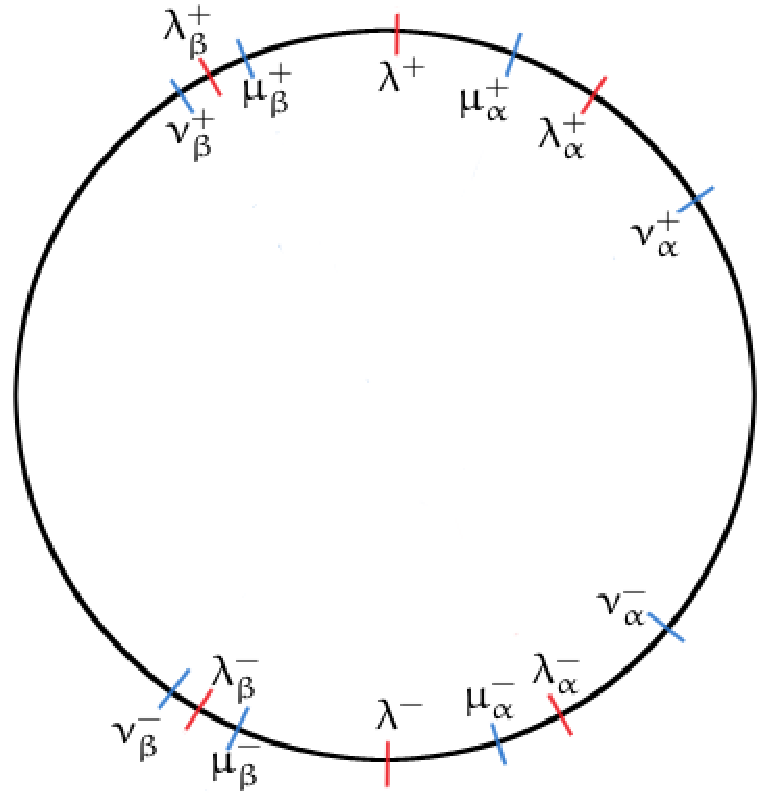}
\caption{A depiction of all fourteen oriented geodesics in $\vec{\mathcal{G}}$ which lie on $M$, as a subset of the set $\mathbb{S}^1_p$ of all directions emanating from cusp $p$.}
\label{fig:intervals}
\end{figure}

\begin{proof}
The existence of these seven pairs of oriented geodesics is due to the existence of unique geodesics representatives of curves on hyperbolic surfaces. Provided that we believe that these are all the simple ideal geodesics on $M$, all five placement properties stated in the lemma are easily deduced from
\begin{itemize}
\item
 Figure~\ref{fig:interlace}, 
 \item
 the uniqueness of geodesic representatives for homotopy classes of ideal paths, and 
 \item
 the fact that these geodesics do not intersect if they have homotopy equivalent representative paths which do not intersect. 
 \end{itemize}
 Therefore, the only thing that we need to prove is that there are no other ideal geodesics on $M$. First note that since $\{\lambda^\pm_\alpha,\lambda^\pm_\beta\}$ lie on pairs of pants contained in $M$, the eight open intervals adjacent to these four oriented 2-sided geodesics all correspond to self-intersecting geodesics (see Theorem~9 of\cite{mcshane_allcusps}).  The remaining four intervals correspond to geodesics which are launched in between $\lambda^-$ and $\mu^-_\alpha$ or $\lambda^-$ and $\mu^-_\beta$ or $\lambda^+$ and $\mu^+_\alpha$ or $\lambda^+$ and $\mu^+_\beta$. These four intervals have equivalent roles to one another, and so we only consider one of them.\medskip

\begin{figure}[h!]
\centering
\includegraphics[width=0.25\textwidth]{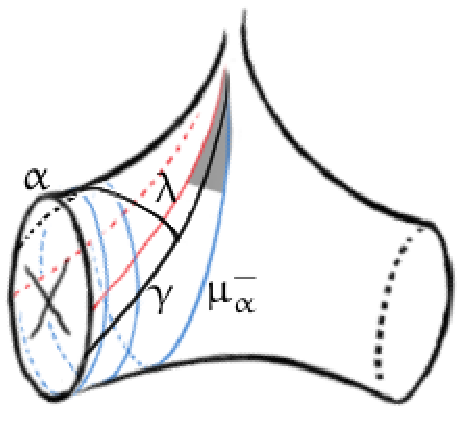}
\caption{The shaded gray region represents one of the four intervals of directions considered in the previous paragraph.}\label{fig:selfintersect}
\end{figure}

\begin{figure}[h!]
\centering
\includegraphics[width=0.5\textwidth]{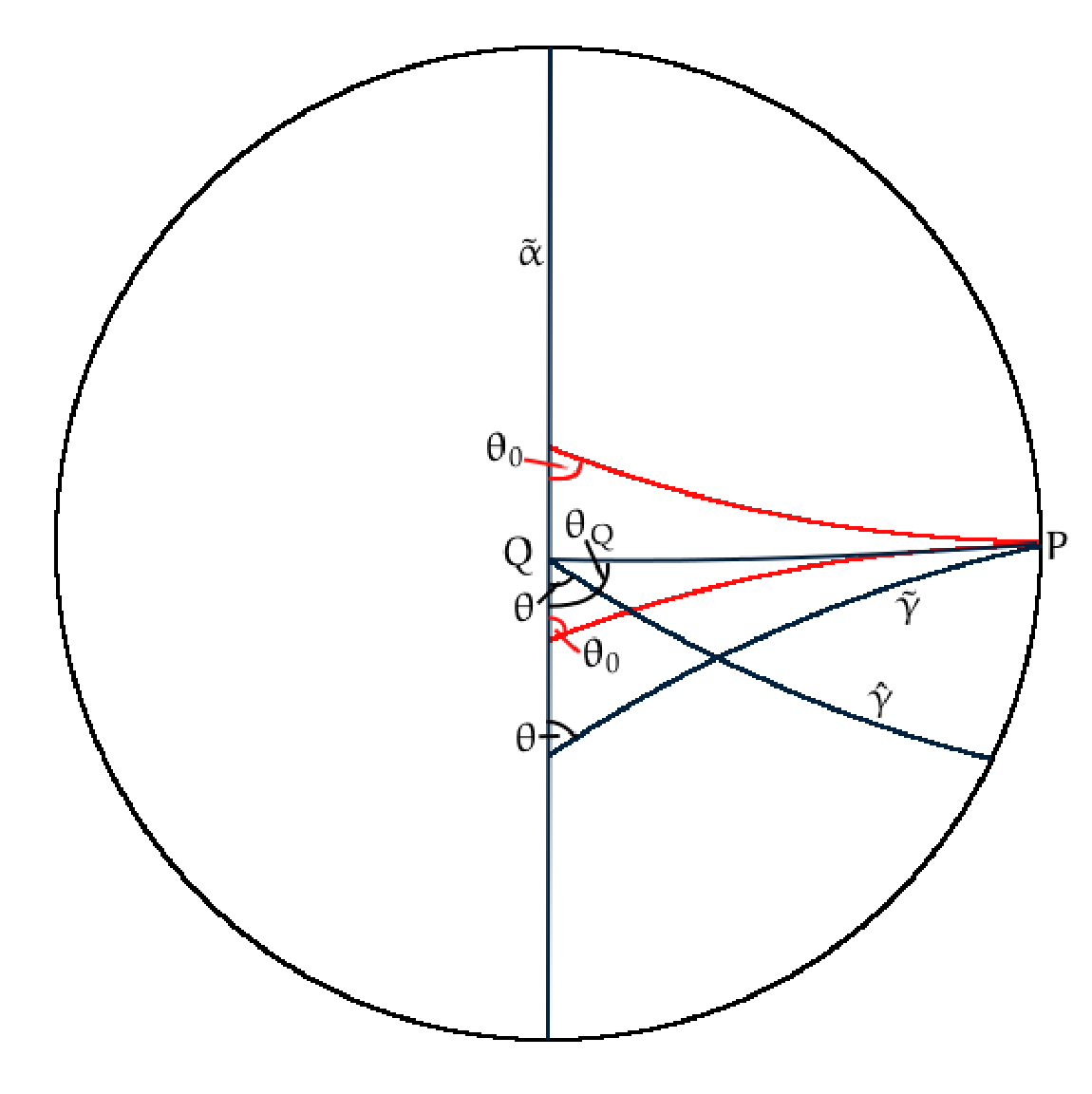}
\caption{The geodesic $\gamma$ must self-intersect.}\label{fig:selfintersecttrig}
\end{figure}

Any geodesic $\gamma$ launched within one of these four intervals (gray region in Figure~\ref{fig:selfintersect}) will necessarily hit $\alpha$ (without loss of generality) at an angle $\theta<\theta_0$, where $\theta_0$ is the angle between $\alpha$ and $\lambda$. We see in Figure~\ref{fig:selfintersecttrig} that a lift $\tilde{\gamma}$ of $\gamma$ is launched from a lift $P\in\partial_\infty\mathbb{H}^2$ of cusp $p$, hits a lift $\tilde{\alpha}$ of $\alpha$ and re-emerges on $\tilde{\alpha}$ as $\hat{\gamma}$ shifted by a distance of $\ell_\alpha$ along $\tilde{\alpha}$. Denote the point of re-emergence by $Q$. A little hyperbolic trigonometry (one may use, for example, Theorem~2.2.2 of\cite{buser}) suffices to show that the angle $\theta_Q$ between the geodesic $\overline{PQ}$ and $\tilde{\alpha}$ is strictly greater than $\theta_0$. Since $\hat{\gamma}$ re-emerges from $Q$ at an angle $\theta<\theta_0<\theta_Q$ within the triangle bordered by $\tilde{\alpha},\tilde{\gamma}$ and $\overline{PQ}$ it must eventually hit one of the sides of this triangle. It cannot hit $\overline{PQ}$ or $\tilde{\alpha}$ as that would form hyperbolic $2$-gons, and therefore must intersect $\tilde{\gamma}$. This intersection descends to a self-intersection point on $\gamma$.
\end{proof}

\subsection{The classification of simple geodesics}

\begin{thm}[Classification of simple geodesics]
\label{thm:simplecharacterization}
The following three types of behaviors partition $\vec{\mathcal{G}}$:
\begin{enumerate}
\item
$\gamma$ is an isolated point in $\vec{\mathcal{G}}$ iff. either $\gamma$ has both ends up cusps or if it spirals to a $1$-sided geodesic;
\item
$\gamma$ is a boundary point of $\vec{\mathcal{G}}$ iff. $\gamma$ spirals towards a $2$-sided simple closed geodesic;
\item
$\gamma$ is neither a boundary nor an isolated point of $\vec{\mathcal{G}}$ iff. $\gamma$ spirals toward a (minimal) geodesic lamination which is not a simple closed geodesic.
\end{enumerate}
\end{thm}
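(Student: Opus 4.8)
The plan is to recognize Theorem~\ref{thm:simplecharacterization} as McShane's classification of complete simple geodesics, adapted to the nonorientable setting, and to reduce the three biconditionals to four one‑directional implications by a partition argument. First recall two standard facts: (i) $\vec{\mathcal{G}}\subset\mathbb{S}^1_p$ is closed, since a transverse self‑intersection of a geodesic persists under perturbation of its launching direction at $p$; and (ii) the forward ($\omega$‑)limit set of a complete simple geodesic $\gamma$ with source at $p$ is either a cusp or a minimal geodesic lamination — minimality being a classical consequence of simplicity of $\gamma$ — and a minimal geodesic lamination on $N$ is either a $1$‑sided simple closed geodesic, a $2$‑sided simple closed geodesic, or an infinite (perfect) lamination containing no closed leaf. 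Thus $\vec{\mathcal{G}}$ is partitioned by the four mutually exclusive behaviors: $\gamma$ has both ends at cusps; $\gamma$ spirals onto a $1$‑sided simple closed geodesic; $\gamma$ spirals onto a $2$‑sided simple closed geodesic; $\gamma$ spirals onto an infinite non‑closed minimal lamination. Since $\vec{\mathcal{G}}$ is also partitioned into its isolated points, its non‑isolated endpoints of complementary intervals (the ``boundary points''), and its remaining non‑isolated points (``neither''), it suffices — by the usual fact that two partitions of a set coincide once every block of one lies inside a block of the other — to prove the four implications: (a) both ends at cusps $\Rightarrow$ isolated; (b) spirals onto a $1$‑sided geodesic $\Rightarrow$ isolated; (c) spirals onto a $2$‑sided geodesic $\Rightarrow$ boundary; (d) spirals onto an infinite minimal lamination $\Rightarrow$ neither.

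Implication (a) is exactly the assertion that an ideal geodesic $\sigma\in\triangle$ is flanked on both sides in $\mathbb{S}^1_p$ by intervals of self‑intersecting directions. Using Proposition~\ref{thm:bijection} to fatten $\sigma$ to a pair of pants or M\"obius band, this is the local picture already analyzed in the proof of Lemma~\ref{thm:fourteen}: a direction launched near that of $\sigma$ produces a geodesic that shadows $\sigma$, returns near the cusp, and is then forced — by the universal‑cover hyperbolic‑trigonometry estimate of that proof (cf.\ \cite[Thm~2.2.2]{buser}), or by McShane's Theorem~9 in \cite{mcshane_allcusps} — to re‑cross $\sigma$, hence to self‑intersect. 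For (c) and (d) the common tool is that a direction obtained by perturbing $\gamma$ and letting the geodesic shadow its limit lamination $\Lambda_\gamma$ for a long time before peeling off into a cusp yields a \emph{simple} geodesic close to $\gamma$; whether such approximants exist on one or both sides of $\gamma$ in $\mathbb{S}^1_p$ is precisely the dichotomy between (c) and (d). When $\Lambda_\gamma$ is an infinite minimal lamination, its transverse Cantor structure supplies shadowing‑then‑escaping simple geodesics accumulating on $\gamma$ from both sides, so $\gamma$ is non‑isolated and adjacent to no complementary interval — this is the classical argument of \cite{mcshane_thesis}, local enough not to be affected by (non)orientability since $\Lambda_\gamma$ is not a $1$‑sided curve. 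When $\Lambda_\gamma$ is a $2$‑sided simple closed geodesic, its collar is an orientable annulus, so the orientable analysis applies verbatim: on one side of $\gamma$ one finds the ``wind $n$ times, then escape to a cusp'' simple approximants, while on the other side any geodesic that winds deeply and then peels off the opposite way must cross $\Lambda_\gamma$ (or its own tail) and self‑intersect, producing the complementary interval that makes $\gamma$ a boundary point.

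The main obstacle — and the only part with no orientable antecedent — is implication (b). The crux is the geometric fact that a geodesic arc which enters the (M\"obius‑band) collar of a $1$‑sided simple closed geodesic $\alpha$, winds sufficiently deeply around $\alpha$, and then exits the collar \emph{must} self‑intersect: winding once around $\alpha$ reverses local orientation, so an arc that enters, penetrates past a bounded depth determined by the collar width, and comes back out is forced to cross itself. Granting this, a simple geodesic $\gamma$ from $p$ spiraling onto $\alpha$ is isolated: any nearby launching direction produces a geodesic that either keeps spiraling onto $\alpha$ — hence, being forward‑asymptotic to the simple geodesic $\gamma$, coincides with $\gamma$ — or eventually peels away from $\alpha$ after winding deeply, hence self‑intersects by the fact just stated; there is no room on either side of $\gamma$ for further simple directions. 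I would make this fact precise by the same glide‑reflection hyperbolic‑trigonometry computation used in the proof of Lemma~\ref{thm:fourteen} (comparing the re‑emergence angle after a glide‑reflection along a lift of $\alpha$ with the collar width), thereby reducing to the M\"obius‑band local model of that lemma; transplanting the cusp‑centered picture of Lemma~\ref{thm:fourteen} to an arbitrarily placed $1$‑sided $\alpha$ is the one genuinely fiddly point, but once it is done, isolatedness — and with it the whole classification — falls out.
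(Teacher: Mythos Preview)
Your overall architecture---partitioning $\vec{\mathcal{G}}$ by $\omega$-limit type and matching it against the isolated/boundary/neither trichotomy---is exactly the paper's, and your treatments of (a), (c), (d) agree with the paper's (for (c) the paper uses Dehn-twists of an ideal arc crossing $\alpha$, which is your ``wind $n$ times then escape''; for (d) it gives Mirzakhani's explicit quasigeodesic-closing construction rather than a bare appeal to Cantor transverse structure, but the content is the same).

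Where you diverge is (b), which you flag as ``the main obstacle'' and propose to attack via the M\"obius collar of $\alpha$, conceding that ``transplanting the cusp-centred picture of Lemma~\ref{thm:fourteen} to an arbitrarily placed $1$-sided $\alpha$ is the one genuinely fiddly point''. No transplant is needed. The $\epsilon$-fattening of $\gamma$ itself---a simple geodesic running from cusp $p$ and spiralling onto $\alpha$---is already a $1$-holed M\"obius band \emph{containing cusp $p$}, so Lemma~\ref{thm:fourteen} applies verbatim: $\gamma$ is one of the $\mu^\pm_\alpha$ in that lemma's list of fourteen, and the lemma exhibits the flanking intervals of self-intersecting directions. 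This is how the paper disposes of (b) in a single sentence. (Incidentally, your assertion that a nearby direction still spiralling onto $\alpha$ ``coincides with $\gamma$'' is not quite right---Lemma~\ref{thm:fourteen} records \emph{two} such directions $\mu^+_\alpha,\mu^-_\alpha$---but this becomes moot once you invoke the fattening directly.) Your collar-based route would eventually work, but it reproves the content of Lemma~\ref{thm:fourteen} in a less convenient model; the paper's observation is that the fattening construction places you exactly in the model where the work has already been done.
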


\begin{proof}
The proof of this result is fairly similar to its orientable-case counterpart and we only outline most of the necessary steps. To begin with, we know that the $\omega$-limit set of the constant speed flow along an oriented geodesic ray $\gamma$ is either a minimal geodesic lamination or empty (i.e.: $\gamma$ goes up a cusp).\medskip

Observe that when the $\omega$-limit of $\gamma\in\vec{\mathcal{G}}$ is a $2$-sided geodesic $\alpha$, the geodesic $\gamma$ fattens up to a geodesically bordered pair of pants homotopy equivalent to any sufficiently small $\epsilon$-neighborhood of $\gamma$. Thus, by Lemma~\ref{thm:fourteen}, there is at least one open interval in $\mathbb{S}^1_p-\vec{\mathcal{G}}$ adjacent to $\gamma$ and hence $\gamma$ is either an isolated point or a boundary point in $\vec{\mathcal{G}}$. Let $\sigma$ be a simple ideal geodesic which intersects $\alpha$, then the sequence of ideal geodesics obtained by Dehn-twisting $\sigma$ along $\alpha$ is a sequence in $\vec{\mathcal{G}}$ approaching $\gamma$. Therefore, any geodesic $\gamma$ which spirals to a $2$-sided geodesic is a boundary point of $\vec{\mathcal{G}}$.\medskip

Next we consider the case when the $\omega$-limit of $\gamma$ is a geodesic lamination $\omega(\gamma)$ which is not a simple closed geodesic. We follow Mirzakhani's proof (Theorem~4.6 of\cite{mirz_simp}) and show that $\gamma$ is not an isolated point by approximating it by a sequence of geodesics $\{\gamma_i\}$ in $\vec{\mathcal{G}}$ which each spiral to a distinct simple closed geodesic. Construct a sequence of quasigeodesics $\hat{\gamma}_i\in\vec{\mathcal{G}}$ as follows: fix a sequence of positive numbers $\{\epsilon_i\}$ converging to $0$. For each $\epsilon_i$, traverse along $\gamma$ until you come to a point $\gamma(t_1)$ along $\gamma$ within distance $\epsilon_i$ of a previous point $\gamma(t_0)$ on $\gamma$ so that
\begin{itemize}
\item
the geodesic arc $\eta$ between $\gamma(t_0)$ and $\gamma(t_1)$ does not intersect $\gamma|_{(-\infty,t_1]}$ (except at its ends),
\item
the arc $\eta$ is within $\epsilon_i$ radians of being orthogonal to $\gamma$ at its two ends, and
\item
the unit tangent vectors $\gamma'(t_0)$ and $\gamma'(t_1)$ are almost parallel; i.e.: the parallel transport of $\gamma'(t_1)$ to $\gamma(t_0)$ is within $\epsilon_i$ radians of $\gamma'(t_0)$.
\end{itemize}
Take the quasigeodesic $\hat{\gamma}_i$ to be the path which traverses along $\gamma$ until time $t_0$ and then indefinitely traverses the broken geodesic loop formed by joining $\eta$ and $\gamma|_{[t_0,t_1]}$ and let $\gamma_i\in\vec{\mathcal{G}}$ be the simple geodesic representative of $\hat{\gamma}_i$. The sequence $\{\gamma_i\}$ approaches $\gamma$. Moreover, depending on whether $\eta$ is chosen to to turn clockwise or anticlockwise when one goes from $\gamma(t_1)$ to $\gamma(t_0)$, we may construct $\{\gamma_i\}$ to approach $\gamma$ from both sides. Therefore $\gamma$ cannot be a boundary point either.\medskip

The previous two paragraphs tell us that the only possible isolated points in $\vec{\mathcal{G}}$ are ideal geodesics $\gamma$ with both ends up cusps or geodesics $\gamma$ which spiral toward $1$-sided simple closed geodesics. Conversely, any such $\gamma$ is an isolated point. If $\gamma$ is an ideal geodesic with both ends up the same cusp (we may assume cusp $p$ wlog), then it is isolated by Lemma~\ref{thm:fourteen}. If $\gamma$ goes between different cusps, then it fattens to an embedded pair of pants and by McShane's original proof (Theorem~9 of\cite{mcshane_allcusps}), it must be an isolated point. If $\gamma$ spirals to a $1$-sided simple closed geodesic $\alpha$, then $\gamma$ fattens to an embedded cusped M\"obius band, and is isolated by Lemma~\ref{thm:fourteen}. This proves statement~$1$. Since geodesics $\gamma$ which spiral to a geodesic lamination which is not a closed geodesic cannot be boundary points, this proves statement~$2$ and hence statement~$3$.
\end{proof}

\begin{cor}
The set $\vec{\mathcal{G}}-\vec{\triangle}$ is a Cantor set of measure $0$.\label{thm:cantor}
\end{cor}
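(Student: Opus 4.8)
The plan is to assemble the conclusion from three ingredients already in hand: Theorem~\ref{thm:simplecharacterization}, Lemma~\ref{thm:fourteen}, and a standard compactness/measure argument on $\mathbb{S}^1_p$. First I would show $\vec{\mathcal{G}}-\vec{\triangle}$ is closed in $\vec{\mathcal{G}}$, and in fact that $\vec{\mathcal{G}}$ is closed in $\mathbb{S}^1_p$. The key point is that a limit of simple complete geodesics emanating from $p$ is again a simple complete geodesic emanating from $p$ — limits cannot develop transverse self-intersections because a transverse intersection is an open condition — so $\vec{\mathcal{G}}$ is compact. Then $\vec{\triangle}$ is exactly the set of isolated points of $\vec{\mathcal{G}}$ whose $\omega$-limit behavior is ``both ends up cusp $p$'': by Theorem~\ref{thm:simplecharacterization}(1), every element of $\vec{\triangle}$ is isolated in $\vec{\mathcal{G}}$, so $\vec{\triangle}$ is open in $\vec{\mathcal{G}}$ and hence $\vec{\mathcal{G}}-\vec{\triangle}$ is closed, therefore compact.

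Next I would show $\vec{\mathcal{G}}-\vec{\triangle}$ is perfect, i.e.\ has no isolated points. By Theorem~\ref{thm:simplecharacterization}, an element $\gamma\in\vec{\mathcal{G}}-\vec{\triangle}$ either spirals to a $1$-sided simple closed geodesic, spirals to a $2$-sided simple closed geodesic, or spirals to a minimal lamination that is not a closed geodesic. In the last case $\gamma$ is approximable from both sides within $\vec{\mathcal{G}}$ by the Mirzakhani-style construction in the proof of Theorem~\ref{thm:simplecharacterization}, and the approximating geodesics spiral to simple closed geodesics, hence lie in $\vec{\mathcal{G}}-\vec{\triangle}$; so $\gamma$ is not isolated in $\vec{\mathcal{G}}-\vec{\triangle}$. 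If $\gamma$ spirals to a $2$-sided curve $\alpha$, then Dehn-twisting along $\alpha$ a simple ideal arc meeting $\alpha$ produces a sequence in $\vec{\mathcal{G}}-\vec{\triangle}$ limiting to $\gamma$ (these twisted arcs still spiral to $\alpha$ in the limit, not up $p$), and the same holds for $\gamma$ spiraling to a $1$-sided curve after fattening to a cusped M\"obius band and applying Lemma~\ref{thm:fourteen}. In every case $\gamma$ is a limit of other points of $\vec{\mathcal{G}}-\vec{\triangle}$.

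Finally I would show $\vec{\mathcal{G}}-\vec{\triangle}$ is totally disconnected and of measure $0$. Total disconnectedness follows because $\vec{\triangle}$ is dense in $\vec{\mathcal{G}}$: every $2$-sided ideal geodesic in $\triangle$ is a boundary point of $\vec{\mathcal{G}}$ adjacent to a complementary interval in $\mathbb{S}^1_p-\vec{\mathcal{G}}$ (Lemma~\ref{thm:fourteen}(5) gives this locally; globally it is McShane's gap argument), so the complement of $\vec{\mathcal{G}}$ in $\mathbb{S}^1_p$ is dense, whence $\vec{\mathcal{G}}$, and a fortiori $\vec{\mathcal{G}}-\vec{\triangle}$, contains no arc and is totally disconnected. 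A compact, perfect, totally disconnected metric space is a Cantor set. For the measure $0$ claim I would invoke the Birman--Series theorem (the union of all complete simple geodesics on a finite-area hyperbolic surface has Hausdorff dimension $1$, hence the set of their endpoints on $\mathbb{S}^1_p$ has measure $0$); since $\vec{\mathcal{G}}$ is a subset of this endpoint set, it has measure $0$. The main obstacle is the density of complementary intervals — i.e.\ checking that $\vec{\mathcal{G}}$ is nowhere dense in $\mathbb{S}^1_p$ — which requires knowing that arbitrarily close to any direction in $\vec{\mathcal{G}}$ there is a direction launching a self-intersecting geodesic; this is the nonorientable analogue of McShane's original gap-existence argument and is where Lemma~\ref{thm:fourteen} and the trigonometric estimate in its proof do the real work.
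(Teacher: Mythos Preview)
Your strategy --- compact, perfect, totally disconnected, then Birman--Series for measure zero --- is exactly the paper's. But your perfectness argument has a concrete error: when $\gamma$ spirals to a $2$-sided curve $\alpha$, the Dehn twists of an ideal arc $\sigma$ along $\alpha$ are again ideal arcs with both ends at $p$, hence lie in $\vec{\triangle}$, not in $\vec{\mathcal{G}}-\vec{\triangle}$ as you claim. Your parenthetical ``these twisted arcs still spiral to $\alpha$ in the limit, not up $p$'' is simply false. To repair this step, note that each twisted arc $\sigma_n$ is isolated in $\vec{\mathcal{G}}$, and the endpoints of its flanking gap intervals are geodesics spiraling to the boundary curves of $\mathrm{Fat}(\sigma_n)$; those spiraling geodesics do lie in $\vec{\mathcal{G}}-\vec{\triangle}$ and also converge to $\gamma$.

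The $1$-sided case is a deeper problem, and it is worth flagging that the statement itself needs care here. By Theorem~\ref{thm:simplecharacterization}(1), a geodesic spiraling to a $1$-sided simple closed curve is \emph{isolated} in $\vec{\mathcal{G}}$, hence also isolated in the subset $\vec{\mathcal{G}}-\vec{\triangle}$. Your appeal to Lemma~\ref{thm:fourteen} cannot produce nearby points of $\vec{\mathcal{G}}-\vec{\triangle}$ in this case, because there are none: Lemma~\ref{thm:fourteen}(5) and the trigonometric self-intersection estimate show such a $\gamma$ sits between two genuine gaps. Likewise, simple geodesics from $p$ to a \emph{different} cusp are isolated in $\vec{\mathcal{G}}$ and lie in $\vec{\mathcal{G}}-\vec{\triangle}$. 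So $\vec{\mathcal{G}}-\vec{\triangle}$ is not literally perfect; it is a Cantor set together with a countable discrete set. The paper's one-line proof glosses over this, and only the measure-zero and nowhere-dense conclusions are actually used downstream (in the proof of Theorem~\ref{thm:coremodulus}). Your claim that $\vec{\triangle}$ is dense in $\vec{\mathcal{G}}$ fails for the same reason, though total disconnectedness still follows once you have measure zero from Birman--Series (empty interior in $\mathbb{S}^1_p$).
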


\begin{proof}
This follows as a consequence of Theorem~\ref{thm:simplecharacterization} because $\vec{\mathcal{G}}-\vec{\triangle}$ is a (non-empty) perfect, compact, totally disconnected metric space. The fact that it has measure $0$ is a consequence of the Birman-Series geodesic sparsity theorem\cite{birmanseries}.
\end{proof}

\begin{note}
The set $\vec{\mathcal{G}}-\vec{\triangle}$ may be obtained by iteratively process of removing open intervals surrounding $\vec{\triangle}$. In particular, no remnant (i.e.: unremoved) closed interval at any given finite step in this process remains unperturbed --- it will, at some stage, have some open interval removed from its ``center''. In fact, it is possible to order the removal of these open sets in much the same way as one might when constructing the usual Cantor set, and in this regard, the fact that $\vec{\mathcal{G}}-\vec{\triangle}$ is a Cantor set is very natural.
\end{note}

\begin{note}
\label{note:summeaning}
Theorem~\ref{thm:simplecharacterization} tells us that every isolated point in $\vec{\mathcal{G}}$ is surrounded by two intervals (one on the left, one on the right) of ``directions" in $\mathbb{S}^1_p$ where geodesics shot out in those directions must self-intersect. In fact, every summand in the Fuchsian McShane identity may be interpreted as the measure of some such interval-pair.
\end{note}

\section{Identities for quasifuchsian representations}

We begin by proving the McShane identity for quasifuchsian representations of nonorientable surface groups by first showing that the series constituting one of the sides of our McShane identity yields a holomorphic function, and then invoking a version of the identity theorem for holomorphic functions on complex manifolds to assert that the identity holds over the entire quasifuchsian character variety. 

\subsection{McShane identity for quasifuchsian representations}

\begin{prop}\label{thm:holomorphicity}
The series 
\begin{align}
H(\rho):=
&\sum_{\{\alpha,\beta\}\in\mathcal{S}(N)}\left(e^{\frac{1}{2}(\ell_{\alpha}(\rho)+\ell_{\beta}(\rho))}+(-1)^{\alpha\cdot\beta}\right)^{-1}\label{eq:holomorphicseries}
\end{align}
defines a well-defined holomorphic function on $\mathcal{QF}(N)$. 
\end{prop}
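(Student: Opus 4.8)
The plan is to establish Proposition~\ref{thm:holomorphicity} by proving that the series $H(\rho)$ converges absolutely and uniformly on compact subsets of $\mathcal{QF}(N)$, whence Weierstrass's theorem (each summand being manifestly holomorphic in $\rho$, since complex lengths are holomorphic by Section~\ref{sec:hyperbolic} and the denominators are nonvanishing --- see below) guarantees that the limit $H$ is holomorphic on all of $\mathcal{QF}(N)$. The structure of the argument mirrors the orientable quasifuchsian case of Akiyoshi--Miyachi--Sakuma, with the additional bookkeeping needed to handle the M\"obius-band summands with their $(-1)^{\alpha\cdot\beta} = -1$ sign.

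First I would fix a compact set $K\subset\mathcal{QF}(N)$ and reduce to a statement about the underlying double cover. Via Theorem~\ref{thm:double}, each $\rho\in K$ restricts to a quasifuchsian representation $d\rho$ of $\pi_1(dN)$, and any element $\{\alpha,\beta\}\in\mathcal{S}(N)$ is topologically encoded (Proposition~\ref{thm:bijection}) by a simple ideal geodesic $\sigma\in\triangle$ with both ends at $p$; this $\sigma$ lifts to a pair of simple ideal geodesics (or a single one, depending on orientation behavior) on $dN$ between lifts of the cusp. I would then bound each summand $\left(e^{\frac{1}{2}(\ell_\alpha(\rho)+\ell_\beta(\rho))}+(-1)^{\alpha\cdot\beta}\right)^{-1}$ from above by a uniform constant (over $K$) times the Euclidean length of the corresponding gap interval in $\mathbb{S}^1_p$ cut out by the complement of $\vec{\mathcal{G}}$ --- exactly the intervals identified in Lemma~\ref{thm:fourteen}(5) and the Note following Corollary~\ref{thm:cantor}. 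Concretely, on the Fuchsian locus this is the classical McShane gap-measure estimate (the summand equals, up to the relevant sign bookkeeping, the total measure of the two intervals flanking the ideal geodesic $\sigma$ inside $\mathbb{R}/\rho_0(m_p)$), and for general $\rho\in K$ one transports this estimate across the quasiconformal conjugacy identifying $C_{\rho_0}$ with $C_\rho$, using that the quasiconformal dilatation is bounded on the compact set $K$; the key analytic input is that a $k$-quasisymmetric homeomorphism distorts ratios of interval lengths by a factor controlled by $k$ (and by the fixed cross-ratio normalization at $0,1,\infty$). Since the flanking intervals over all $\{\alpha,\beta\}\in\mathcal{S}(N)$ are pairwise disjoint subintervals of a circle of finite total length, the sum of their lengths is bounded by a constant, giving the desired uniform absolute convergence.

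I would also need to check two subsidiary points. One: the denominator $e^{\frac{1}{2}(\ell_\alpha(\rho)+\ell_\beta(\rho))}+(-1)^{\alpha\cdot\beta}$ never vanishes on $\mathcal{QF}(N)$, so that each summand is genuinely holomorphic and the gap-length bound makes sense. For $\alpha\cdot\beta$ even this is automatic since the exponential has positive real part on the Fuchsian locus and, more robustly, vanishing would force $\ell_\alpha+\ell_\beta \in 2\pi i\mathbb{Z}$, which is incompatible with the geometry of a quasifuchsian group where the relevant translation lengths have positive real part; for $\alpha\cdot\beta$ odd (the M\"obius summands), the denominator $e^{\frac12(\ell_{\alpha_1}+\ell_{\beta_1})}-1$ is nonzero because $\ell_{\alpha_1},\ell_{\beta_1}$ are complex lengths of one-sided curves normalized (Section~\ref{sec:hyperbolic}) with the extra $-i\pi$, which keeps $e^{\frac12\ell}$ away from $1$ --- indeed on the Fuchsian locus $e^{\frac12(\ell_{\alpha_1}+\ell_{\beta_1})} = -e^{\frac12(\mathrm{Re}\,\ell_{\alpha_1}+\mathrm{Re}\,\ell_{\beta_1})}$ has negative real part. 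Two: I should confirm that the sign $(-1)^{\alpha\cdot\beta}$ is locally constant along $\mathcal{QF}(N)$, which is immediate since $\alpha\cdot\beta$ is a topological (hence deformation-invariant) quantity. With these in hand, Weierstrass's theorem on locally uniformly convergent series of holomorphic functions on a complex manifold finishes the proof.

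The main obstacle I anticipate is the uniform-over-$K$ control of the gap-length estimate: on the Fuchsian locus the identification of each summand with an interval measure is the content of McShane's original argument (and its nonorientable refinement via Lemma~\ref{thm:fourteen} and Theorem~\ref{thm:simplecharacterization}), but for non-Fuchsian $\rho$ the ``lengths'' are complex and one no longer has a literal measure --- one must instead produce an explicit holomorphic comparison between the summand and the (real) Fuchsian gap measure times a bounded holomorphic factor, uniformly as $\rho$ ranges over $K$. Making this quasiconformal distortion bound effective and uniform --- in particular verifying that the Beltrami coefficients $\mu_\#$ built in the proof of Theorem~\ref{thm:double} vary continuously with $\rho\in K$ so that the dilatation is bounded --- is the technical heart of the argument.
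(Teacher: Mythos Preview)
Your overall shape --- prove locally uniform absolute convergence and apply Weierstrass --- matches the paper's, but the core estimate you propose does not work. You want to bound each summand at $\rho\in K$ by a constant (depending only on $K$) times the Euclidean length of the corresponding Fuchsian gap interval, and then sum using the disjointness of these gaps inside a circle of length~$1$. The problem is that the boundary extension of a $k$-quasiconformal map is only H\"older continuous, not Lipschitz: there is no bound of the form $|\psi_\rho(x)-\psi_\rho(y)|\le C_K\,|x-y|$ uniform over all intervals, and ``quasisymmetric control of ratios'' does not manufacture one. Equivalently, the bilipschitz comparison one actually obtains from the quasi-isometry of the two groups, namely $\mathrm{Re}(\ell_\gamma(\rho))\ge\tfrac{1}{k}\,\ell_\gamma(\rho_0)$, gives only
\[
\Bigl|\bigl(e^{\frac12(\ell_\alpha(\rho)+\ell_\beta(\rho))}\pm 1\bigr)^{-1}\Bigr|\;\lesssim\;\bigl(\text{Fuchsian gap}\bigr)^{1/k},
\]
a sublinear power of the Fuchsian gap. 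With only this the disjoint-interval argument collapses: the sum $\sum(\text{gap})^{1/k}$ over disjoint subintervals of $[0,1]$ can diverge (e.g.\ $n$ intervals of length $1/n$ gives $n^{1-1/k}$).

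The paper's proof avoids the Fuchsian-gap comparison entirely. It fixes a finite-sided ideal-polygon fundamental domain, assigns to each pair $\{\alpha,\beta\}$ the combinatorial crossing number $\|\{\alpha,\beta\}\|$, invokes \cite[Lemma~5.2]{amsgroups} to get $\mathrm{Re}(\ell_\gamma(\rho))\ge\tfrac{c}{k}\|\gamma\|$ uniformly on the compact set, and then uses the Birman--Series polynomial bound on the number of simple pairs with $\|\{\alpha,\beta\}\|\le m$ to dominate the whole series by $\sum_m \mathrm{poly}(m)\,e^{-\frac{c}{2k}m}<\infty$. The polynomial-growth input is precisely what replaces the Lipschitz bound you were hoping for; any repair of your gap-length route would have to feed this in as well, at which point it reduces to the paper's argument. (Separately, your nonvanishing check for the M\"obius denominators misreads the normalization of Section~\ref{sec:hyperbolic}: on the Fuchsian locus a one-sided curve has rotation angle $\theta=\pi$, so $\mathrm{Im}(\ell_{\alpha_1})=\theta-\pi=0$ and $e^{\frac12(\ell_{\alpha_1}+\ell_{\beta_1})}$ is real and $>1$, not negative. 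The correct reason the denominator never vanishes is simply that loxodromics have $\mathrm{Re}(\ell_\gamma(\rho))>0$, whence $|e^{\frac12(\ell_\alpha+\ell_\beta)}|>1$.)
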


\begin{proof}
We use the fact that a pointwise convergent sequence of holomorphic functions that is uniformly convergent on all compact sets converges to a holomorphic function. To begin with, we specify an ordering on the summands for \eqref{eq:holomorphicseries} and consider the sequence of partial sums for this series.\\
\newline
Let $R\subset\mathbb{H}=\tilde{N}$ be a fundamental domain for $N$ such that $R$ is a finite sided geodesic ideal polygon. The boundary $\partial R$ of $R$ projects to a collection of disjoint ideal geodesics $\pi(\partial R)$ on $N$, and every essential simple closed geodesic pair $\{\gamma_1,\gamma_2\}\in\mathcal{S}=\mathcal{S}_1\cup\mathcal{S}_2$ intersects $\pi(\partial R)$ transversely and nontrivially. Thus, to any collection of geodesics $\{\gamma_1,\ldots,\gamma_k\}$, we may assign a positive integer $\norm{\{\gamma_1,\ldots,\gamma_k\}}$ denoting the total number of geodesic segments that $\{\gamma_1,\ldots,\gamma_k\}$ splits into when cut along $\pi(\partial R)$. Order the elements of $\mathcal{S}$ as a sequence $(\{\gamma_1,\gamma_2\}_i)_{i\in\mathbb{N}}$ with nondecreasing $\norm{\{\gamma_1,\gamma_2\}_i}$ and consider the function $Q_0:\mathbb{N}\rightarrow\mathbb{N}$ counting the number of $\{\gamma_1,\gamma_2\}$ with $\norm{\{\gamma_1,\gamma_2\}}\leq n$:
\begin{align*}
Q_0(n):=\mathrm{Card}\left\{\{\gamma_1,\gamma_2\}\in\mathcal{S}\mid \norm{\{\gamma_1,\gamma_2\}}\leq n\right\}.
\end{align*}
It is clear that $Q_0(n)$ is bounded above by $P_0(n)^2$, for
\begin{align*}
P_0(n):=\mathrm{Card}\left\{\gamma\text{ - (the image of) a simple closed geodesic on }dN \mid \norm{\{\gamma\}}\leq n\right\}.
\end{align*}
The function $P_0$ is in turn bounded above by polynomial (Lemma~2.2 of\cite{birmanseries}), and therefore $Q_0(n)$ is bounded above by a polynomial in $n$.\\
\newline
Consider the following sequence of partial sums:
\begin{align}
H_n(\rho):=\sum_{\substack{\{\gamma_1,\gamma_2\}_i\\\text{for }i\leq n}}
\left(e^{\frac{1}{2}(\ell_{\gamma_1}(\rho)+\ell_{\gamma_2}(\rho))}+(-1)^{\gamma_1\cdot\gamma_2}\right)^{-1}.
\end{align}
Since the length functions $\ell_\gamma$ are holomorphic on $\mathcal{T}(dN)=\mathcal{QF}(N)$, each partial sum $H_n$ is a holomorphic function on $\mathcal{QF}(N)$.\newline
\\
It remains to show that for any compact set $C\subset\mathcal{QF}(N)$, the sequence of functions $(H_i)_{i\in\mathbb{N}}$ is uniformly absolutely convergent. We utilize the following fact (Lemma~5.2 of\cite{amsgroups}): let $\rho_0$ be a Fuchsian representation for $N$, then for every compact set $C\subset\mathcal{QF}(N)$, there exist $C$-dependent constants $c>0$ and $k>0$ such that for all $\gamma\in\mathcal{S}$ and $\rho\in C$,
\[\tfrac{c}{k}\norm{\gamma}\leq \tfrac{1}{k}\ell_\gamma(\rho_0)\leq \mathrm{Re}(\ell_\gamma(\rho)).\]
Therefore, we obtain the following comparisons:
\begin{align}
\sum_{\substack{\{\gamma_1,\gamma_2\}_i\\\text{for all }i}}
\abs{\left(e^{\frac{1}{2}(\ell_{\gamma_1}(\rho)+\ell_{\gamma_2}(\rho))}+(-1)^{\gamma_1\cdot\gamma_2}\right)^{-1}}
\leq
\sum_{\substack{\{\gamma_1,\gamma_2\}_i\\\text{for all }i}}
\left(e^{\frac{c}{2k}\norm{\{\gamma_1,\gamma_2\}}}-1\right)^{-1}
\leq
\sum_{m=1}^\infty
\frac{Q_0(m)-Q_0(m-1)}{e^{\frac{c}{2k}m}-1}.\label{eq:uniformbound}
\end{align}
The fact that \eqref{eq:uniformbound} converges ensures that $H(\rho):=\lim_{n\to\infty} H_n(\rho)$ is well-defined and that the sequence $(H_i)$ is uniformly absolutely convergent. Finally, the absolute convergence of this series ensures that this limit is independent of the ordering we placed on $\mathcal{S}$ when summing the series.
\end{proof}

\begin{repeatthm2'}[Identity for quasifuchsian representations of nonorientable surface groups]
Given a nonorientable cusped hyperbolic surface $N$ and a quasifuchsian representation $\rho:\pi_1(N)\to \mathrm{PSL}(2,\mathbb{C})$, define $\mathcal{S}(N)$ to be the set of embedded pairs of pants and $1$-holed M\"{o}bius bands containing cusp $p$ (Note~\ref{note:1}). Then, 
\begin{align*}
\sum_{\{\alpha,\beta\}\in\mathcal{S}(N)}\left(e^{\frac{1}{2}(\ell_{\alpha}(\rho)+\ell_{\beta}(\rho))}+(-1)^{\alpha\cdot\beta}\right)^{-1}
=\frac{1}{2}.
\end{align*}
where $\ell_\gamma(\rho)$ is the \emph{complex length} of $\gamma$ (see \S\ref{sec:hyperbolic}).
\end{repeatthm2'}

\begin{proof}
By Proposition~\ref{thm:holomorphicity}, we know that $H(\cdot)$ defines a holomorphic function on $\mathcal{QF}(N)$. Moreover, we know that $H\equiv\frac{1}{2}$ on the Fuchsian locus of $\mathcal{QF}(N)$, which is a totally real analytic submanifold of maximal dimension. Thus, the identity theorem (see, e.g.: Proposition~6.5 of\cite{loustau}) tells us that $H(\rho)=\frac{1}{2}$ for every $\rho\in\mathcal{QF}(N)$, giving us the desired identity.
\end{proof}

\subsection{Identity for horo-core annuli}

Given a quasifuchsian representation $\rho:\pi_1(N)\rightarrow\mathrm{PSL}(2,\mathbb{C})$, consider the convex core of its corresponding quasifuchsian 3-manifold $\mathbb{H}/\rho(\pi_1(N))$. Any sufficiently small horospherical cross-section of the cusp $p$ in $\mathbb{H}/\rho(\pi_1(N))$ is a flat annulus. 

\begin{dfn}[horo-core annulus]
The conformal structure of this annulus is independent of the chosen horosphere (given that it is sufficiently small). We refer to this flat annulus, up to homothety, as the \emph{horo-core annulus} of $\rho$ at $p$.
\end{dfn}

Let $m_p\in\pi_1(N)$ denote a peripheral homotopy class going around cusp $p$ once. Normalize every $\rho$ so that $\rho(m_p)=\pm\left[\begin{smallmatrix}1&1\\ 0 &1\end{smallmatrix}\right]$, since the limit curve $C_\rho$ is invariant under translation by $1$ (i.e.: the action of $\rho(m_p)$), there must be points on the limit curve $C_\rho$ realizing the minimum and the maximum height (i.e.: imaginary component) of $C_\rho$ on $\mathbb{C}$. 

\begin{dfn}[Width partition of $\vec{\triangle}$]
\label{dfn:width}
Let $\tilde{z}_-$ and $\tilde{z}_+$ respectively be a lowest point and a highest point on $C_\rho$ and let $z_-,z_+$ denote their projected images on $\mathbb{S}^1_p=C_\rho/\mathbb{Z}$. The points $z_\pm$ define a bipartition of $\vec{\triangle}$ as follows, let:
\begin{itemize}
\item 
$\vec{\triangle}^+(\rho)$ denote the subset of $\vec{\triangle}$ composed of simple bi-infinite geodesics with launching directions in the half-open interval $[z_-,z_+)$ (oriented with respect to $m_p$);
\item
$\vec{\triangle}^-(\rho)$ denote the subset of $\vec{\triangle}$ composed of simple bi-infinite geodesics with launching directions in the half-open interval $[z_-,z_+)$ (also oriented with respect to $m_p$).
\end{itemize}
We call any bipartition $(\vec{\triangle}^+(\rho),\vec{\triangle}^-(\rho))$ obtained from such a process a \emph{width partition}.
\end{dfn}

The main result of this section is the following identity for the modulus of the horo-core annulus of a quasifuchsian representation:

\begin{thm}[Horo-core annulus identity]
\label{thm:coremodulus}
Given a width partition $(\vec{\triangle}^+(\rho),\vec{\triangle}^-(\rho))$, the modulus $\mathrm{mod}_p(\rho)$ of the horo-core annulus at $p$ of a quasifuchsian representation $\rho$ is given by:
\begin{align}
\mathrm{mod}_p(\rho)
=&\mathrm{Im}
\sum_{\{\alpha,\beta;\epsilon\}\in\triangle^+(\rho)}\left(e^{\frac{1}{2}(\ell_{\alpha}(\rho)+\ell_{\beta}(\rho))}+(-1)^{\alpha\cdot\beta}\right)^{-1}\label{eq:modtop}\\
=-&\mathrm{Im}
\sum_{\{\alpha,\beta;\epsilon\}\in\triangle^-(\rho)}\left(e^{\frac{1}{2}(\ell_{\alpha}(\rho)+\ell_{\beta}(\rho))}+(-1)^{\alpha\cdot\beta}\right)^{-1}.\label{eq:modbot}
\end{align}
\end{thm}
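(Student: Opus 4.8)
\textbf{Proof strategy for Theorem~\ref{thm:coremodulus}.}
The plan is to mimic the structure of the proof of Theorem~\ref{thm:mcshane}: first establish the identity on the Fuchsian locus by a direct geometric/measure-theoretic argument, then upgrade it to all of $\mathcal{QF}(N)$ via holomorphicity and the identity theorem. The key conceptual input is that the modulus of the horo-core annulus is computed by integrating the ``width'' of the limit curve $C_\rho$: after normalizing $\rho(m_p)=\pm\left[\begin{smallmatrix}1&1\\0&1\end{smallmatrix}\right]$, a horosphere at height $h\gg 0$ meets the convex core in an annulus whose modulus is proportional to the vertical extent carved out by $C_\rho$, and the core of the annulus is cut by the geodesics $\vec{\mathcal{G}}$ emanating from $p$ into arcs. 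By Corollary~\ref{thm:cantor}, the complement $\vec{\mathcal{G}}-\vec{\triangle}$ has measure zero in $\mathbb{S}^1_p = C_\rho/\mathbb{Z}$, so the total (signed, vertical) width is the sum over the intervals of $\mathbb{S}^1_p - \vec{\mathcal{G}}$ surrounding the isolated points of $\vec{\mathcal{G}}$, each such interval-pair being indexed by an element of $\vec{\triangle}$ via Lemma~\ref{thm:fourteen} and Proposition~\ref{thm:bijection}.

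The first main step is the \emph{local (gap) computation}: for each $\{\alpha,\beta;\epsilon\}\in\vec{\triangle}$, identify the two open intervals of $\mathbb{S}^1_p-\vec{\mathcal{G}}$ flanking the corresponding isolated direction (the twelve distinct intervals organized by Lemma~\ref{thm:fourteen} in the M\"obius-band case, and the analogous pants picture) and show that the \emph{net vertical displacement} of $C_\rho$ across the union of these intervals equals $\mathrm{Im}\bigl(e^{\frac12(\ell_\alpha(\rho)+\ell_\beta(\rho))}+(-1)^{\alpha\cdot\beta}\bigr)^{-1}$, with the convention that each unoriented pair $\{\alpha,\beta\}$ contributes once per orientation $\epsilon$. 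On the Fuchsian locus this is McShane's original ``gap'' computation: the relevant horocyclic arc lengths on the boundary of the maximal cusped pair of pants (resp. M\"obius band) are exactly $\bigl(e^{\frac12(\ell_\alpha+\ell_\beta)}\pm1\bigr)^{-1}$ (the $(-1)^{\alpha\cdot\beta}$ sign is Norbury's modification recorded in Note~\ref{note:1}), and one checks that the ``horizontal'' McShane summand becomes the ``vertical'' contribution after the rotation by $\tfrac{\pi}{2}$ built into the width partition of Definition~\ref{dfn:width} --- this is why the $\mathrm{Im}$ appears. Summing over $\vec{\triangle}^+(\rho)$ telescopes the vertical displacements along the arc of $C_\rho$ from $z_-$ to $z_+$, giving the height difference, which is exactly $\mathrm{mod}_p(\rho)$ up to the normalization of the modulus; summing over $\vec{\triangle}^-(\rho)$ traces the complementary arc and gives the negative.

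The second main step is \emph{holomorphicity and analytic continuation}. The series $\sum_{\{\alpha,\beta;\epsilon\}\in\vec{\triangle}^+(\rho)}\bigl(e^{\frac12(\ell_\alpha(\rho)+\ell_\beta(\rho))}+(-1)^{\alpha\cdot\beta}\bigr)^{-1}$ is dominated, after taking absolute values, by the same polynomial-over-exponential bound \cite[Lemma~5.2]{amsgroups} used in the proof of Proposition~\ref{thm:holomorphicity} (since $\vec{\triangle}^+(\rho)$ injects into $\mathcal{S}(N)$ under $\mathrm{Fat}$, up to the two-to-one orientation count), so it converges locally uniformly and defines a holomorphic function on $\mathcal{QF}(N)$; hence its imaginary part is a pluriharmonic function. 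The function $\rho\mapsto\mathrm{mod}_p(\rho)$ is likewise real-analytic (indeed pluriharmonic) in $\rho$, being the imaginary part of a holomorphic parameter of the cusp torus. Having matched the two pluriharmonic functions on the totally real, maximal-dimensional Fuchsian locus $\mathcal{T}(N)\subset\mathcal{QF}(N)$ (Theorem~\ref{thm:double}), I conclude they agree everywhere by the identity principle for pluriharmonic functions (equivalently, apply the holomorphic identity theorem \cite[Proposition~6.5]{loustau} to a holomorphic primitive, noting $\mathcal{QF}(N)$ is simply connected). The same argument with $\vec{\triangle}^-(\rho)$ gives \eqref{eq:modbot}.

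\textbf{Main obstacle.} The delicate point is the first step: pinning down, uniformly in the choice of width partition, that the \emph{signed} vertical widths of the flanking gap-intervals sum to the imaginary part of the complex summand --- in particular checking the bookkeeping of signs (the $(-1)^{\alpha\cdot\beta}$ twist for $1$-sided geodesics and the orientation label $\epsilon$), and verifying that the choice of lowest/highest points $z_\pm$ of $C_\rho$ only permutes terms between the two half-sums without changing either total (so the partition in Definition~\ref{dfn:width} is well-defined up to the stated ambiguity). On the Fuchsian locus this is classical McShane--Norbury geometry transplanted to the horocycle, but verifying it persists as an \emph{identity of holomorphic functions} --- rather than merely a real identity --- requires care that the gap lengths one assigns are the analytic continuations of the Fuchsian horocyclic arc lengths, which is where Lemma~\ref{thm:fourteen}(5) (the twelve intervals are genuinely distinct, so there is no cancellation or overcounting) does the essential combinatorial work.
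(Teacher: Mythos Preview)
Your analytic-continuation step has a fatal gap: on the Fuchsian locus the identity you want to match is \emph{vacuous}. When $\rho$ is Fuchsian, the limit curve $C_\rho$ is the real axis, so its highest and lowest points coincide in height and $\mathrm{mod}_p(\rho)=0$; simultaneously all complex lengths $\ell_\alpha(\rho)$ are real, so each summand is real and the imaginary part of your series is also $0$. Thus the two pluriharmonic functions you are trying to equate both vanish identically on $\mathcal{T}(N)$, and this says nothing about their values off the Fuchsian locus. (Concretely: $\mathrm{Im}(z)$ is pluriharmonic on $\mathbb{C}$ and vanishes on $\mathbb{R}$ without being identically zero.) Your suggestion to ``apply the holomorphic identity theorem to a holomorphic primitive'' does not help, because you have only matched the imaginary parts on the totally real slice, not the full holomorphic primitives --- and you have given no candidate holomorphic function whose imaginary part is $\mathrm{mod}_p(\rho)$, nor any reason to expect $\mathrm{mod}_p(\rho)$ to be pluriharmonic (it is defined via extremal points of a Jordan curve that is merely quasiconformally varying).

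The paper fixes this by proving a \emph{complex} identity first and taking imaginary parts only afterward. Specifically, Lemma~\ref{thm:height} (the Width formula) shows that for any fixed $\xi,\eta\in\vec{\mathcal{G}}-\vec{\triangle}$ the holomorphic sub-series $w^\xi_\eta(\rho)$ equals the holomorphic function $x(\rho)-y(\rho)$, where $x,y$ are the (analytically varying) non-$\infty$ endpoints of $\xi,\eta$ on $C_\rho$. On the Fuchsian locus this is a \emph{nontrivial real} identity --- it is the McShane/Birman--Series statement that the horocyclic arc from $x$ to $y$ is exhausted by the gaps indexed by $\vec{\triangle}^\xi_\eta$ --- so the identity theorem genuinely bites. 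Only then does one specialise $\xi,\eta$ to the particular geodesics corresponding to the extremal points $z_\pm$ of $C_\rho$ for the given $\rho$, and take imaginary parts of both sides. A secondary point your proposal elides is that the index set $\vec{\triangle}^+(\rho)$ depends on $\rho$, so your series is not a priori a single holomorphic function; the paper sidesteps this by working with a fixed partition $\vec{\triangle}^\xi_\eta$ throughout the analytic continuation. Finally, there is a technical case --- when $z_\pm$ does not lie in $\vec{\mathcal{G}}-\vec{\triangle}$ --- that requires a separate geometric argument about the pleating locus of the convex core boundary; your outline does not address this.
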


In order to prove this, we first establish a mild nonorientable generalization of Akiyoshi-Miyachi-Sakuma's Theorem~2.3 in\cite{amsgroups}.

\subsubsection{Width formula}

Let $\xi,\eta\in\vec{\mathcal{G}}-\vec{\triangle}$ denote two oriented simple bi-infinite geodesics emanating from the cusp $p$, then the pair $\{\xi,\eta\}$ bipartitions the set $\vec{\mathcal{\triangle}}$, composed of all oriented simple bi-infinite geodesic arcs on $N$ with both ends at $p$, into 
the following subsets:
\begin{itemize}
\item
$\vec{\mathcal{\triangle}}^\xi_\eta$ consisting of all the geodesic arcs in $\vec{\mathcal{\triangle}}$ which are launched (along the orientation of $m_p$) between $\xi$ (inclusive) and $\eta$ (exclusive), and
\item
$\vec{\mathcal{\triangle}}^\eta_\xi$ consisting of all the geodesic arcs in $\vec{\mathcal{\triangle}}$ which are launched (along the orientation of $m_p$) between $\eta$ (inclusive) and $\xi$ (exclusive).
\end{itemize}

We have hitherto regarded $\xi$ and $\eta$ as oriented simple bi-infinite geodesics on $N$ emanating from the cusp $p$, and we now introduce an alternative interpretation of these symbols for the remainder of this paper. This is a mild form of notation abuse introduced for the sack of notational simplicity. 

\begin{note}[Reinterpretation of geodesic rays]
\label{note:reinterpret}
Given a quasifuchsian representation $\rho$, there is a natural identification between the limit curve $C_\rho$ of $\rho$ and the limit curve $C_{\rho_0}$ of $\rho_0$ via the quasiconformal uniformization map on the ideal boundary of $\mathbb{H}^3$ (this can also be done via the ideal boundary for relatively hyperbolic groups). Moreover, this identification is is equivariant with respects to the action of $m_p\in\pi_1(N)$, and so we have:
\[
\xi,\eta\in\vec{\mathcal{G}}-\vec{\triangle}
\subset\mathbb{S}_p^1
=\mathbb{R}/\mathbb{Z}
=(C_{\rho_0}-\{\infty\})/\rho_0(m_p)
\cong(C_\rho-\{\infty\})/\rho(m_p),
\]
thereby allowing us to regard $\xi,\eta$ as geodesic rays on $(C_\rho-\{\infty\})/\rho(m_p)$. There are $\mathbb{Z}$-lifts, $\{\xi_k\}$ and $\{\eta_k\}$, respectively of $\xi$ and $\eta$ on $C_\rho-\{\infty\}$ ordered so that $\{\xi_k\}$ and $\{\eta_k\}$ interlace each other along $C_\rho-\{\infty\}$ as
\[
\ldots \xi_{-2},\eta_{-2},\xi_{-1},\eta_{-1},\xi_0,\eta_0,\xi_1,\eta_1,\xi_2,\eta_2,\ldots
\]
The segment of $C_\rho-\{\infty\}$ going from $\xi_0$ (inclusive) to $\xi_1$ (exclusive) is precisely one lift of $(C_\rho-\{\infty\})/\rho(m_p)$ and we identify
\begin{itemize}
\item
$\vec{\mathcal{\triangle}}^\xi_\eta$ with all the lifts on $C_\rho-\{\infty\}$ of elements of $\vec{\mathcal{\triangle}}^\xi_\eta$ lying between $\xi_0$ (inclusive) and $\eta_0$ (exclusive);
\item
$\vec{\mathcal{\triangle}}^\eta_\xi$ with all the lifts on $C_\rho-\{\infty\}$ of elements of$\vec{\mathcal{\triangle}}^\eta_\xi$  lying between $\eta_0$ (inclusive) and $\xi_1$ (exclusive).
\end{itemize}
This re-interprets the elements of $\vec{\mathcal{\triangle}}^\xi_\eta$ and $\vec{\mathcal{\triangle}}^\eta_\xi$ as geodesic rays in $\mathbb{H}^3$ going from $\{\infty\}$ (which is a lift of $p$) to points in $C_\rho-\{\infty\}$.
\end{note}

\begin{lem}[Width formula]\label{thm:height}
Given a quasifuchsian representation $\rho$ normalized so that the boundary holonomy of $m_p$ is given by $\pm\left[\begin{smallmatrix} 1 & 1 \\ 0 & 1\end{smallmatrix}\right]$. The function $w^\xi_\eta:\mathcal{QF}(N)\rightarrow\mathbb{C}$ given by 
\begin{align}
w^\xi_\eta(\rho)
&:=\sum_{\{\alpha,\beta;\epsilon\}\in\vec{\mathcal{\triangle}}^\xi_\eta} 
\left(e^{\frac{1}{2}(\ell_{\alpha}(\rho)+\ell_{\beta}(\rho))}+(-1)^{\alpha\cdot\beta}\right)^{-1}\label{eq:lemtop}\\
&=1-\sum_{\{\alpha,\beta;\epsilon\}\in\vec{\mathcal{\triangle}}^\eta_\xi} 
\left(e^{\frac{1}{2}(\ell_{\alpha}(\rho)+\ell_{\beta}(\rho))}+(-1)^{\alpha\cdot\beta}\right)^{-1}\label{eq:lembot}
\end{align}
is well-defined, holomorphic and gives the complex distance between the (non-$\infty$) endpoint $x$ of $\xi$ and the (non-$\infty$) endpoint $y$ of $\eta$.
\end{lem}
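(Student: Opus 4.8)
The plan is to establish, in turn: convergence and holomorphicity of the series \eqref{eq:lemtop}; the geometric identification of $w^\xi_\eta(\rho)$ with the complex displacement between the endpoints of $\xi$ and $\eta$; and the alternate form \eqref{eq:lembot}, which will follow formally from the other two together with Theorem~\ref{thm:mcshane}. For convergence and holomorphicity: each summand of \eqref{eq:lemtop} is a summand of the McShane series $H(\rho)$ of Proposition~\ref{thm:holomorphicity}, because it depends only on the unoriented pair $\mathrm{Fat}(\{\alpha,\beta;\epsilon\})=\{\alpha,\beta\}\in\mathcal{S}(N)$, and each summand of $H(\rho)$ occurs at most twice in \eqref{eq:lemtop} (once per orientation). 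Hence on every compact $C\subset\mathcal{QF}(N)$ the partial-sum tails of \eqref{eq:lemtop} are bounded in absolute value by twice the corresponding tails of the convergent majorant \eqref{eq:uniformbound} from the proof of Proposition~\ref{thm:holomorphicity}. So the partial sums --- each holomorphic, being a finite sum of the holomorphic length functions of Section~\ref{sec:hyperbolic} --- converge locally uniformly, $w^\xi_\eta$ is holomorphic on $\mathcal{QF}(N)$, and absolute convergence makes the sum independent of the summation order.

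Granting convergence, \eqref{eq:lembot} is immediate: since $\xi,\eta\notin\vec{\triangle}$, the two half-open arcs of $\mathbb{S}^1_p$ from $\xi$ to $\eta$ partition $\vec{\triangle}$ as $\vec{\triangle}^\xi_\eta\sqcup\vec{\triangle}^\eta_\xi$, so the sums in \eqref{eq:lemtop} and \eqref{eq:lembot} add up (using absolute convergence) to $\sum_{\sigma\in\vec{\triangle}}\left(e^{\frac12(\ell_\alpha(\rho)+\ell_\beta(\rho))}+(-1)^{\alpha\cdot\beta}\right)^{-1}$, which equals $2\cdot\frac12=1$ because the orientation-forgetting map $\vec{\triangle}\to\triangle\cong\mathcal{S}(N)$ of Proposition~\ref{thm:bijection} is two-to-one and the summand is orientation-blind, while $\sum_{\mathcal{S}(N)}=\frac12$ by Theorem~\ref{thm:mcshane}.

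The substance is the identification of $w^\xi_\eta(\rho)$ with $y-x$, where $x=e_\rho(\xi)$ and $y=e_\rho(\eta)$ denote the finite endpoints on $C_\rho$ of the lifts from $\infty$ of $\xi$ and $\eta$. I would prove it first on the Fuchsian locus and then propagate by the identity theorem. From the construction in the proof of Theorem~\ref{thm:double}, $C_\rho=\psi_{\mu_\#}(C_{\rho_0})$, and with $\psi_{\mu_\#}$ normalized compatibly with $\rho(m_p)\colon z\mapsto z+1$ the point $e_\rho(\xi)$ is the image under $\psi_{\mu_\#}$ of the $\rho_0$-determined endpoint $e_{\rho_0}(\xi)\in\mathbb{R}$; by the holomorphic dependence of quasiconformal solutions on their Beltrami coefficient (Theorem~4.37 of \cite{imayoshitaniguchi}), $\rho\mapsto e_\rho(\eta)-e_\rho(\xi)$ is holomorphic on $\mathcal{QF}(N)$. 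On the Fuchsian locus $\mathcal{T}(N)$, where $C_{\rho_0}=\mathbb{R}\cup\{\infty\}$ and $\mathbb{S}^1_p=\mathbb{R}/\mathbb{Z}$ has total length $1$, the equality $w^\xi_\eta(\rho_0)=e_{\rho_0}(\eta)-e_{\rho_0}(\xi)$ is exactly the localized form of Norbury's identity: the arc of $\mathbb{S}^1_p$ between the launching directions of $\xi$ and $\eta$ is, up to a Lebesgue-null exceptional set (null by the Birman-Series theorem, cf.\ Corollary~\ref{thm:cantor}), the disjoint union of the complementary intervals of $\mathbb{S}^1_p\setminus\vec{\mathcal{G}}$ indexed by $\vec{\triangle}^\xi_\eta$, the interval attached to $\{\alpha,\beta;\epsilon\}$ having length $\left(e^{\frac12(\ell_\alpha(\rho_0)+\ell_\beta(\rho_0))}+(-1)^{\alpha\cdot\beta}\right)^{-1}$; this is the content of the gap-decomposition in \cite{norbury} (see also Appendix~\ref{sec:appendix}). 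Since both $w^\xi_\eta$ and $\rho\mapsto e_\rho(\eta)-e_\rho(\xi)$ are holomorphic on $\mathcal{QF}(N)$ and agree on the maximal-dimensional totally real submanifold $\mathcal{T}(N)$ of Theorem~\ref{thm:double}, the identity theorem \cite[Proposition~6.5]{loustau} propagates the equality to all of $\mathcal{QF}(N)$. (One can instead argue directly on $C_\rho$ following \cite[Theorem~2.3]{amsgroups}, telescoping the displacement of $C_\rho$ along $[\xi,\eta]$ over the partition into gaps; that route additionally needs a non-orientable version of their estimate that the Cantor set of Corollary~\ref{thm:cantor} contributes zero displacement along $C_\rho$.)

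The main obstacle is the Fuchsian gap-decomposition itself: one must pin down which complementary interval of $\mathbb{S}^1_p\setminus\vec{\mathcal{G}}$ is attached to which element of $\vec{\triangle}$ --- using the local model inside an embedded $1$-holed M\"obius band from Lemma~\ref{thm:fourteen} and the trichotomy of Theorem~\ref{thm:simplecharacterization}, handling in particular the $1$-sided ideal geodesics and the M\"obius-band gaps that have no orientable analogue --- and then compute each interval's length, by hyperbolic trigonometry in the associated pair of pants or $1$-holed M\"obius band, to be precisely $\left(e^{\frac12(\ell_\alpha+\ell_\beta)}+(-1)^{\alpha\cdot\beta}\right)^{-1}$. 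Once these non-orientable inputs are in place, the rest is a routine transcription of \cite{amsgroups} (and, on the Fuchsian locus, of \cite{norbury}).
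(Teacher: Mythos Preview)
Your proposal is correct and follows essentially the same route as the paper: holomorphicity via the subseries bound from Proposition~\ref{thm:holomorphicity}, holomorphicity of the endpoint difference via the holomorphic dependence of quasiconformal solutions on their Beltrami coefficients, equality on the Fuchsian locus from the gap decomposition plus Birman--Series, and propagation by the identity theorem; the relation \eqref{eq:lembot} is then deduced from Theorem~\ref{thm:mcshane} exactly as you do. The only cosmetic differences are that the paper derives \eqref{eq:lembot} last rather than immediately after convergence, and records the complex displacement as $x-y$ rather than your $y-x$ (a harmless orientation convention).
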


\begin{proof}
Since $w^\xi_\eta$ is a subseries of \eqref{eq:holomorphicseries}, our proof of Proposition~\ref{thm:holomorphicity} ensures that $w^\xi_\eta$ is a well-defined and holomorphic function. To show that $w^\xi_\eta$ satisfies \eqref{eq:lemtop}, \eqref{eq:lembot} and may be interpreted as the complex distance between $x$ and $y$, we show that these properties are satisfied on the Fuchsian locus and invoke the identity theorem.\medskip

We first observe that $x$ and $y$ may be regarded as holomorphic functions on $\mathcal{QF}(N)$ as follows: given an arbitrary element $\rho\in\mathcal{QF}(N)=\mathcal{T}(N)$, let $\mu$ be a Beltrami differential on $dN$ representing $\rho$ and consider the canonical $\mu$-quasiconformal mapping $\psi_\mu:\hat{\mathbb{C}}\rightarrow\hat{\mathbb{C}}$. Even though $\psi_\mu$ is dependent on the representative $\mu$ chosen, the restriction of $\psi_\mu$ to $\hat{\mathbb{R}}$ is independant of $\mu$ as $\psi_\mu$ must take the attracting fixed-points of $\rho_0(\gamma)$ to the corresponding attracting fixed points of $\rho(\gamma)$ for every $\gamma\in\pi_1(N)$. We denote this restricted function by $\psi_\rho$. The holomorphic dependence of $\psi_\mu$ with respect to $\mu$ (see, e.g.: Theorem~4.37 of\cite{imayoshitaniguchi}) ensures that the function $\psi_{(\cdot)}(\cdot):\mathcal{QF}(N)\times\hat{\mathbb{R}}\rightarrow\hat{\mathbb{C}}$ that takes $(\rho,z)$ to $\psi_\rho(z)$ is holomorphic in the first coordinate. Take $x_0$ and $y_0$ to be the points in $\hat{\mathbb{R}}=C_{\rho_0}\cup\{\infty\}$ which constitute the respective non-$\infty$ endpoints of $\xi$ and $\eta$ with respect to $\rho_0$. Then, define the holomorphic functions $x(\rho):=\psi_\rho(x_0)$ and $y(\rho):=\psi_\rho(y_0)$. The function $\omega^\xi_\eta: \mathcal{QF}(N)\rightarrow\mathbb{C}$ defined by
\begin{align}
\omega^\xi_\eta(\rho):= \psi_{\rho}(x_0)-\psi_{\rho}(y_0)=x(\rho)-y(\rho)
\end{align}
is therefore also holomorphic.\medskip

When $\rho$ is in the Fuchsian locus, the number $\omega^\xi_\eta(\rho)$ is equal to the length of the horocyclic segment on the length $1$ horocycle truncated by $\xi$ and $\eta$ (as measured in the direction along $m_p$ from $\xi$ to $\eta$). The Birman-Series theorem tells us that the length of this horocyclic segment is equal to the sum of all of the McShane identity ``gaps" lying on this segment. This is precisely expressed by the following identity as a consequence of the geometric interpretation of the usual Fuchsian identity:
\begin{align}
\omega^\xi_\eta(\rho)
=w^\xi_\eta(\rho)
:=\sum_{\{\alpha,\beta;\epsilon\}\in\vec{\mathcal{\triangle}}^\xi_\eta} 
\left(e^{\frac{1}{2}(\ell_{\alpha}(\rho)+\ell_{\beta}(\rho))}+(-1)^{\alpha\cdot\beta}\right)^{-1}.\label{eq:lemtopplus}
\end{align}
As with the proof of Theorem~\ref{thm:mcshane}, the identity theorem extends the above Fuchsian identity \eqref{eq:lemtopplus} over the entire quasifuchsian character variety. Replacing $\mathcal{S}(N)$ by $\mathcal{S}(N)\times\{\pm\}=\vec{\triangle}=\vec{\triangle}^\xi_\eta\cup\vec{\triangle}^\eta_\xi$ in the expression of Theorem~\ref{thm:mcshane} doubles the $\frac{1}{2}$ on the right-hand side to a $1$, hence giving us equation~\eqref{eq:lembot}. The complex distance interpretation is because $\omega^\xi_\eta=w^\xi_\eta$, and the former is defined to be the complex difference between $x$ and $y$.
\end{proof}

\subsubsection{The horo-core annulus identity}

We now prove Theorem~\ref{thm:coremodulus}.

\begin{proof}
Let $z_-$ and $z_+$ respectively be lowest and highest height points inducing the width partition $(\vec{\triangle}^+(\rho),\vec{\triangle}^-(\rho))$, we first assume that $z_\pm$ correspond (as described in the paragraph before Definition~\ref{dfn:width}) to simple bi-infinite geodesics $\zeta_\pm\in\vec{\mathcal{G}}-\vec{\triangle}$. Then, we may set $\xi=\zeta_+$ (i.e.: $x=z_+$) and $\eta=\zeta_-$ (i.e.: $y=z_-$) in the context of Lemma~\ref{thm:height}, which in turn means that $\vec{\triangle}^\xi_\eta=\vec{\triangle}^+(\rho)$ and $\vec{\triangle}^\eta_\xi=\vec{\triangle}^-(\rho)$. Then, by taking the imaginary component of equations~\eqref{eq:lemtop} and \eqref{eq:lembot}, we obtain that 
\begin{align*}
\mathrm{Im}(w^\xi_\eta(\rho))
= \mathrm{Im}&
\sum_{\{\alpha,\beta;\epsilon\}\in\triangle^+(\rho)}\left(e^{\frac{1}{2}(\ell_{\alpha}(\rho)+\ell_{\beta}(\rho))}+(-1)^{\alpha\cdot\beta}\right)^{-1}\\
=-\mathrm{Im}&
\sum_{\{\alpha,\beta,\epsilon\}\in\triangle^-(\rho)}\left(e^{\frac{1}{2}(\ell_{\alpha}(\rho)+\ell_{\beta}(\rho))}+(-1)^{\alpha\cdot\beta}\right)^{-1}.
\end{align*}
To show that $\mathrm{Im}(w^\xi_\eta(\rho))=\mathrm{mod}_p(\rho)$, observe that the horo-core annulus is bounded above by the two hyperbolic planes $Pl_\pm\subset\mathbb{H}^3$ with respective ideal boundaries given by
\[
\{u+iv \mid v=\mathrm{Im}(z_\pm)\}\cup\{\infty\}\subset\hat{\mathbb{C}}.
\]
Thus, it is conformally equivalent to a flat annulus obtained by gluing a rectangle of length $1$ and width $\mathrm{Im}(z_+)-\mathrm{Im}(z_-)=\mathrm{Im} (w^\xi_\eta(\rho))$. It is well-known that this width also the modulus $\mathrm{mod}_p(\rho)$ of this flat annulus.\medskip

So far, we have established the result in the case when $\zeta_\pm\in\vec{\mathcal{G}}-\vec{\triangle}$. To complete our proof, we consider the case when at least one of $\zeta_\pm$ is either self-intersecting or in $\vec{\triangle}$ and show that it is possible to replace them with simple bi-infinite geodesics which spiral to simple closed geodesics (i.e.: elements of $\vec{\mathcal{G}}-\vec{\triangle}$).\medskip

Let us assume without loss of generality that $\zeta_+\notin\vec{\mathcal{G}}-\vec{\triangle}$. Since $z_+$ is a highest point on the limit curve $C_\rho$, the geodesics $\zeta_+$ must lie on the boundary of the convex core. This in turn means that it must not (transversely) intersect the pleating locus. If not, curve shortening near the pleating locus would show that there is a curve homotopy equivalent to, but locally shorter than, the geodesic $\zeta_+$. Thus, $\zeta_+$ lies on a geodesic-bordered (smooth) hyperbolic subsurface $X_+$ within the top boundary of the convex core. In particular, the fattening of any sufficiently small $\epsilon$-neighborhood of the subsegment of $\zeta_+$ up to its first point of self-intersection  (on the convex core boundary) is topologically a pair of pants (it cannot be a $1$-holed M\"obius band because the pleated geodesic boundary of the convex core of $\mathbb{H}^3/\rho(\pi_1(N))$ is topologically equivalent to an orientable surface $dN$). Since $X_+$ is geodesically convex, it must therefore contain a geodesic bordered pair of pants which contains $\zeta_+$ up to its first point of self-intersection. Since $\vec{\mathcal{G}}-\vec{\triangle}$ is a Cantor set (Corollary~\ref{thm:cantor}), this means that $\zeta_+$ is launched between within a gap region bounded by simple bi-infinite geodesics $\nu_1$ and $\nu_2$ (lying on $X_+$) which spiral to simple closed geodesics (Theorem~\ref{thm:simplecharacterization}).\medskip

It should be noted that $Pl_+$ contains a lift of the universal cover of $X_+$, and therefore lifts of $\nu_1,\nu_2$ emanating from $\infty$ must have the same height as a lift of $\zeta_+$ emanating from $\infty$. This means that the complex distance between the $\nu_i$ is strictly real, and replacing $\zeta_+$ with $\nu_1$ (or $\nu_2$) does not affect equations~\eqref{eq:modtop} and \eqref{eq:modbot}. Therefore, we may assume without loss of generality that $\zeta_\pm\in\vec{\mathcal{G}}-\vec{\triangle}$, as desired.
\end{proof}

\section{Identities for pseudo-Anosov mapping Klein bottles}
\label{sec:final}

The goal of this section is to prove the McShane identity (Theorem~\ref{thm:zero}) for pseudo-Anosov mapping Klein bottles, as well as to use these summands to describe the cusp geometry of any given pseudo-Anosov mapping Klein bottle $K_{(d\varphi,\iota)}$. Recall from \S\ref{sec:pseudo} that the mapping torus $M_{d\varphi}$ is a double cover of $K_{(d\varphi,\iota)}$. We shall make use of the interplay between these two hyperbolic $3$-manifolds. To begin with, let us clarify some notation. Given cusp $p$ on $N$, there are two cusps on $dN$ which cover $p$ and we denote them by $q$ and $q'$.

\subsection{The statement of the cuspidal tori identity}

Any embedded horospheric cross-section of cusp $p$ in $K_{(d\varphi,\iota)}$ is the same Euclidean torus $T_{(d\varphi,\iota)}$ up to homothety. Given a pair of generators $[\alpha],[\beta]$ for $\pi_1(T_{(d\varphi,\iota)})$ (i.e.: a marking on $T_{(d\varphi,\iota)}$), we define the \emph{marked modulus} of $(T_{(d\varphi,\iota)}, \{[\alpha],[\beta]\})$ to be the Teichm\"uller space parameter for this marked torus in the Teichm\"uller space $\mathit{Teich}_{1,0}=\mathbb{H}^2\subset\mathbb{C}$ (see, for example, \S1.2.2 of\cite{imayoshitaniguchi}). The cusp torus $T_{(d\varphi,\iota)}\subset K_{(d\varphi,\iota)}$ at $p$ lifts to two distinct cusp tori in $dN$, with one based at $q$ and the other at $q'$. We shall at times study $T_{(d\varphi,\iota)}$ via its lift at the cusp $p$ in $dN$.


\begin{figure}[h!]
\centering
\includegraphics[width=0.8\textwidth]{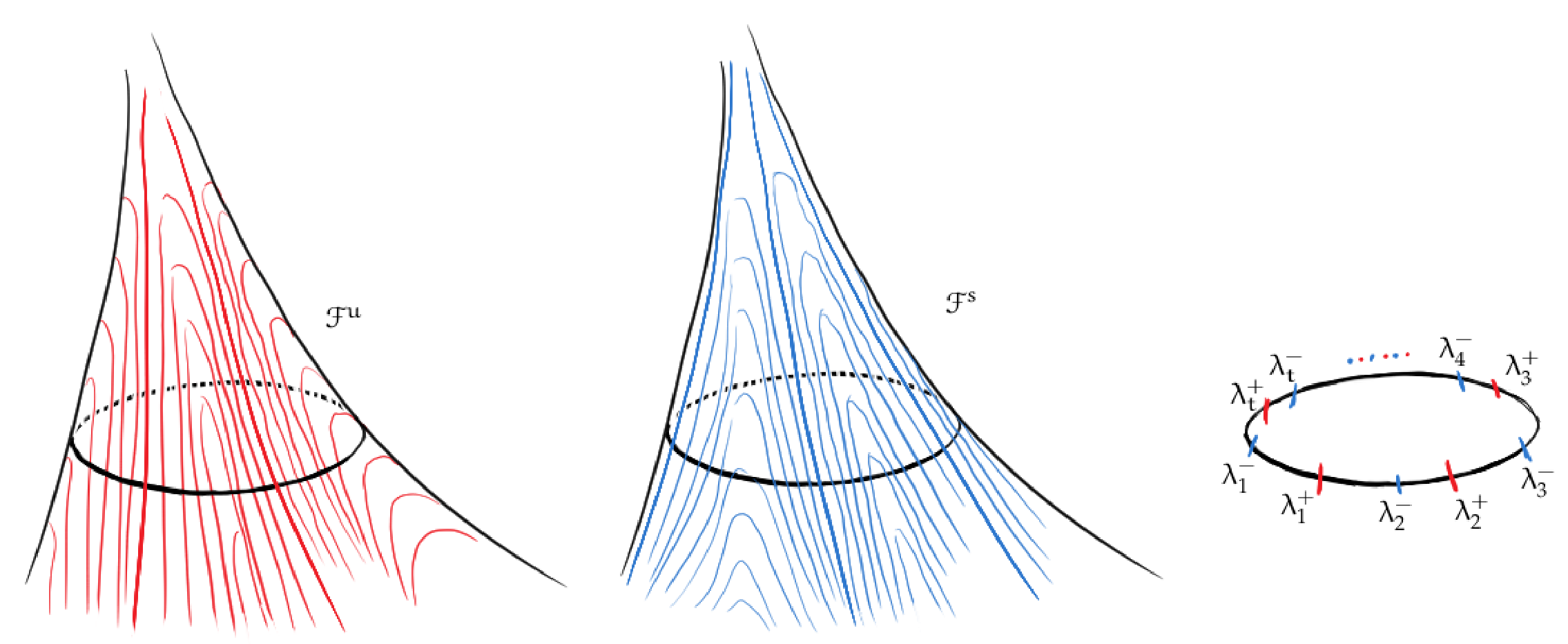}
\caption{(left to right) the stable foliation $\mathcal{F}^s$ around cusp $q$; the unstable foliation $\mathcal{F}^u$ around cusp $q$; singular foliations as points in the set $\mathbb{S}^1_q$ of directions emanating from $q$ on $dN$.}\label{fig:stableunstable}
\end{figure}

\begin{dfn}[Signature of an orientation-preserving pseudo-Anosov map]
Given an orientation-preserving pseudo-Anosov map $d\varphi:dN\to dN$, the singular leaves $\{\lambda^+_1,\ldots,\lambda^+_t\}$ of its stable foliation around cusp $q$ in $dN$ and the singular leaves $\{\lambda^-_1,\ldots,\lambda^-_t\}$ of its unstable foliation around cusp $q$ in $dN$ interlace one another as illustrated in Figure~\ref{fig:stableunstable}. The pseudo-Anosov map $d\varphi$ preserves each set of singular leaves and acts on $\{\lambda^\pm_1,\ldots,\lambda^\pm_t\}$ by cyclic permutation, shifting the index by some $s\in\{0,\ldots,t-1\}$. We refer to the pair $(s,t)$ as the \emph{signature} of the pseudo-Anosov map $d\varphi$ at $q$. When $s=0$, we say that $d\varphi$ has \emph{simple signature} at $q$.
\end{dfn}

Given any twisted-pA pair $(d\varphi,\iota)$, where $d\varphi:dN\to dN$ has simple signature at $q$ (and hence at $q'$). There is a canonical marking on the cusp torus $T_{(d\varphi,\iota)}$ at $p$ by taking the pair $(m_p,l_p)$, where the meridian $m_p$ is a loop around cusp $p$ and the longitude $l_p$ may be constructed as follows:

\begin{dfn}[longitude]\label{dfn:longitude}
Recall that $K_{(d\varphi,\iota)}$ may be constructed by taking $dN\times [0,\frac{1}{2}]$ and identifying $dN\times\{0\}$ via $\iota$ and $dN\times\{\frac{1}{2}\}$ via $\varphi\circ\iota$. Choose an arbitrary point $x$ lying on an arbitrary singular leaf $\lambda$ at $q$ on $dN\times\{t\neq0,\frac{1}{2}\}$, the intervals $\{x\}\times[0,\frac{1}{2}]$ and $\{\iota(x)\}\times[0,\frac{1}{2}]$ join up to form a path on $K_{(d\varphi,\iota)}$ because $(x,0)\sim(\iota(x),0)$. Moreover, it makes sense to assert that both end points $(x,\frac{1}{2})$ and $(\iota(x),\frac{1}{2})$ lie on the same singular leaf on $N\times\{\frac{1}{2}\}$ because:
\begin{itemize}
\item
$d\varphi\circ\iota$ exchanges the stable and unstable foliations on $N$ and hence the foliation structure decends to $N\times\{\frac{1}{2}\}=dN/d\varphi\circ\iota$;
\item
since $d\varphi\circ\iota=\iota\circ(d\varphi)^{-1}$, hence $(\iota(x),\frac{1}{2})=(\iota\circ(d\varphi)^{-1}\circ(d\varphi)(x),\frac{1}{2})\sim(d\varphi(x),\frac{1}{2})$;
\item
$d\varphi$ having simple signature means that $(d\varphi(x),\frac{1}{2})$ and $(x,\frac{1}{2})$ descend to the same singular leaf on $N\times\{\frac{1}{2}\}$.
\end{itemize}
Joining these end points along the singular leaf then results in a simple closed loop $l_p$ that is, up to homotopy, independent of our choice of $\lambda$ and $x$. We call $l_p$ the \emph{longitude} of the cusp torus $T_{(d\varphi,\iota)}\subset K_{(d\varphi,\iota)}$ at cusp $p$.
\end{dfn}

\begin{note}
The lifts $l_q,l_{q'}$ of the longitude $l_p$ to either cusp torus in $dN$ agrees with the notion of longitude given in Definition~3.4 of\cite{amsgroups} for cuspidal tori on pseudo-Anosov mapping tori.
\end{note} 
 
The singular leaves $\lambda^\pm_i$ are singular only at cusp $q$, and thus form simple bi-infinite paths on $dN$. The geodesic representative for $\lambda^+_i$ (resp. $\lambda^-_i$) has one end at cusp $q$ and the other end spirals towards a leaf of the unstable (resp. stable) measured lamination of $d\varphi$, and we endow each $\lambda^\pm_i$ with the orientation going from the cusp $q$ to the measured lamination. We regard the cyclically ordered set $\{\lambda^-_1,\lambda^+_1,\ldots,\lambda^-_t,\lambda^+_t\}$ of interlacing singular leaves as a cyclically ordered set of ``directions" in the circle's worth of ``directions" $\mathbb{S}^1_q$ emanating from cusp $q$ (see Figure~\ref{fig:stableunstable}) on $dN$. We use these singular leaves on $dN$ to partition $\triangle$, the set of unoriented ideal geodesics on $N$ with both ends at $p$ (see Definition~\ref{defn:sidedness}) via a partitioning algorithm.\medskip

Every unoriented ideal geodesic $\sigma\in\triangle$ on $N$ is covered by $4$ \emph{oriented} ideal geodesics on $dN$, of which precisely $2$ have their sources at $q$. We refer to these two oriented ideal geodesics as the \emph{$q$-source lifts} of $\sigma$.

\begin{itemize}
\item 
$\triangle^+_q$: the set of ideal geodesics $\sigma\in\triangle$ where both $p$-source lifts of $\sigma$ are launched within an interval of the form $(\lambda^-_i,\lambda^+_i)\subset\mathbb{S}^1_q$;

\item 
$\triangle^-_q$: the set of ideal geodesics $\sigma$ where both $p$-source lifts of $\sigma$ are launched within an interval of the form $(\lambda^+_i,\lambda^-_{i+1})\subset\mathbb{S}^1_q$;

\item
$\triangle^0_q$: the remaining set of ideal geodesics consisting of those with one $q$-source lift launched within each of the two interval types.
\end{itemize}
Proposition~\ref{thm:bijection} translates the above partition of $\triangle$ into the partition $\mathcal{S}(N)=\mathcal{S}^+(N)\sqcup\mathcal{S}^0(N)\sqcup\mathcal{S}^-(N)$. Since $d\varphi$ fixes the singular leaves $\{\lambda^\pm_i\}$, this partition is $\varphi$-invariant and descends to a partition 
\[
\mathcal{S}_{(d\varphi,\iota)}=\mathcal{S}^+_{(d\varphi,\iota)}\sqcup\mathcal{S}^0_{(d\varphi,\iota)}\sqcup\mathcal{S}^-_{(d\varphi,\iota)}
\]
of the collection of $K_{(d\varphi,\iota)}$-homotopy classes of pairs of pants on either of the two exceptional $N$ fibers of $K_{(d\varphi,\iota)}$.

\begin{note}\label{note:psourcebijection}
Since there are two $q$-source lifts for each ideal ideal geodesics in $\triangle$, and each such lift emanates from $q$ in a different direction, the set of all $q$-source lifts of ideal geodesics in $\triangle$ naturally identifies with $\vec{\triangle}$ --- the set of oriented simple ideal geodesics on $N$ with both ends at $p$. This identification by no means a coincidence and comes from the simple fact that the set of directions  $\mathbb{S}^1_p$ emanating from $p$ naturally agree with the set of directions $\mathbb{S}^1_q$ emanating from $q$. This natural bijection of directions is useful for the proof of Theorem~\ref{thm:kleinbottle}, where we will be working with ideal geodesics on $dN$ emanating from $p$ but gather corresponding summands indexed by ideal geodesics in $\vec{\triangle}$ which are emanate from $p$ on $N$.
\end{note}

\begin{thm}
\label{thm:kleinbottle}
Given a twisted-pA pair $(d\varphi,\iota)$ where the pseudo-Anosov map $\varphi$ has simple signature $(0,t)$, the \emph{marked modulus} $\mathrm{mod}_p(d\varphi,\iota)$, with respect to the marking $(m_p,l_p)$, of the cusp-$p$ torus $T_{(d\varphi,\iota)}$ of the pseudo-Anosov mapping Klein bottle $K_{(d\varphi,\iota)}$ is given by:
\begin{align}
\mathrm{mod}_p(d\varphi,\iota)
=&\left(\frac{2}{t}\sum_{\{\alpha,\beta\}\in\mathcal{S}^+_{(d\varphi,\iota)}}+\frac{1}{t}\sum_{\{\alpha,\beta\}\in\mathcal{S}^0_{(d\varphi,\iota)}}\right)
\left(e^{\frac{1}{2}(\ell_{\alpha}(\phi)+\ell_{\beta}(\phi))}+(-1)^{\alpha\cdot\beta}\right)^{-1},\label{eq:mappingtori}\\
=-&\left(\frac{2}{t}\sum_{\{\alpha,\beta\}\in\mathcal{S}^-_{(d\varphi,\iota)}}+\frac{1}{t}\sum_{\{\alpha,\beta\}\in\mathcal{S}^0_{(d\varphi,\iota)}}\right)
\left(e^{\frac{1}{2}(\ell_{\alpha}(\phi)+\ell_{\beta}(\phi))}+(-1)^{\alpha\cdot\beta}\right)^{-1}.\label{eq:mappingtorineg}
\end{align}
\end{thm}

\begin{note}
Theorem~\ref{thm:zero} for the special case that $d\varphi$ has simple signature is an immediate corollary of Theorem~\ref{thm:kleinbottle}.
\end{note}

For a pseudo-Anosov map $d\varphi$ with general signature $(s,t)$, the pseudo-Anosov map $\hat{d\varphi}:=(d\varphi)^{\frac{t}{\mathrm{gcd}(s,t)}}$ and all of its integer powers have simple signature at $q$. We use this in \S\ref{sec:generalsignature} to extend Theorem~\ref{thm:kleinbottle} to the general signature case.

\subsection{Proof for the simple signature case}
Consider a twisted-pA pair $(d\varphi,\iota)$ where $\varphi$ is a pseudo-Anosov homeomorphism of simple signature. Instead of working with the cusp $p$ torus $T_{(d\varphi,\iota)}$ on the pseudo-Anosov Klein bottle $K_{(d\varphi,\iota)}$, we shall work with the cusp $q$ torus on the pseudo-Anosov mapping torus $M_{d\varphi}$ which doble-covers $K_{(d\varphi,\iota)}$. Let $\phi:\pi_1(M_\varphi)\rightarrow\mathrm{PSL}(2,\mathbb{C})$ denote its holonomy representation. The longitude $l_q$ of $d\varphi$ is a candidate for the stable letter $l$ for the fundamental group $\pi_1(M_{d\varphi})$ as a HNN-extension. This means that the meridian $m_q$ and the longitude $l_q$ define a canonical $\mathbb{Z}$-basis $(m_q,l_q)$ for the fundamental group of the cusp torus at $q$. We use this basis as a marking basis for the cusp $q$ lift of the cusp $p$ torus $T_{(d\varphi,\iota)}$. 


\begin{proof}
Given the pseudo-Anosov map $d\varphi$, there is an associated collection of oriented simple geodesics
\[
\{\lambda_1^-,\lambda_1^+,\lambda_2^-,\lambda_2^+,\ldots,\lambda_t^-,\lambda_t^+\}\subset\mathbb{S}^1_q
\]
consisting of geodesic representatives for the singular leaves, at $q$, of the stable and unstable foliations of $d\varphi$. We fix a (Cantor set) boundary point $\xi_i^+\in(\lambda_i^-,\lambda_i^+)\cap\vec{\mathcal{G}}$ and  a boundary point $\xi_i^-\in(\lambda_i^+,\lambda_{i+1}^-)\cap\vec{\mathcal{G}}$ for each $i$. Since $\xi_i^-$ (resp. $\xi_i^+$) is a boundary point of $\vec{\mathcal{G}}$, the underlying oriented simple geodesic spirals to some oriented simple closed $2$-side geodesic on $dN$, which we denote by $\gamma_i^-$ (resp. $\gamma_i^+$).\medskip

Let $\mathrm{Fix}^+(A)$ denote the attracting fixed point of a loxodromic M\"obius transformation $A\in\mathrm{PSL}(2,\mathbb{C})$. Since $\phi(l_q)$ acts on $\mathbb{C}=\partial_\infty\mathbb{H}^3-\{\infty\}$ via translation, it is explicitly expressed as an addition by some complex number $\mathrm{mod}_q(d\varphi)$ and for $\gamma=\gamma^\pm_i$ we have: 
\begin{align*}
\mathrm{mod}_q(d\varphi)
&=\phi(l)\cdot\mathrm{Fix}^+(\phi(\gamma))-\mathrm{Fix}^+(\phi(\gamma))\\
&=\mathrm{Fix}^+(\phi(l\gamma l^{-1}))-\mathrm{Fix}^+(\phi(\gamma))\\
&=\mathrm{Fix}^+(\phi(d\varphi_*\gamma))-\mathrm{Fix}^+(\phi(\gamma)).
\end{align*}

Furthermore, the restriction of $\phi$ to $\pi_1(dN)\leq \pi_1(M_\varphi)$ is the strong limit of a path $\{\rho_\tau\}$ of quasifuchsian representations of $\pi_1(dN)$, therefore 
\begin{align*}
\mathrm{mod}_q(\varphi)
=\lim_{\tau\to\infty}
\left(\mathrm{Fix}^+(\rho_\tau(d\varphi_*\gamma))-\mathrm{Fix}^+(\rho_\tau(\gamma))\right).
\end{align*}
By construction, we know that $\eta^+_i:=d\varphi_*\xi_i^+$ comes after $\xi^+_i$ on the interval $(\lambda^-_i,\lambda^+_i)$. Identifying $\mathbb{S}^1_p$ and $\mathbb{S}^1_q$ as per Note~\ref{note:psourcebijection} then lets us view $\eta^+_i$ and $\xi^+_i$ as elements of $\mathbb{S}^1_p$ and Lemma~\ref{thm:height} then tells us that:  
\begin{align*}
w^{\xi^+_i}_{\eta^+_i}(\rho_\tau)
&=\sum_{[\{\alpha,\beta;\epsilon\}]\in\vec{\triangle}^{\xi^+_i}_{\eta^+_i}}
\left(e^{\frac{1}{2}(\ell_{\alpha}(\rho_\tau)+\ell_{\beta}(\rho_\tau))}+(-1)^{\alpha\cdot\beta}\right)^{-1}.
\end{align*}

Since $\mathrm{Fix}^+(\rho_\tau(d\varphi_*\gamma_i^+))$ and $\mathrm{Fix}^+(\rho_\tau(\gamma_i^+))$ are  the respective non-$\infty$ end-points for $\eta_i^+:=d\varphi_*\xi_i^+$ and $\xi_i^+$, the series $w^{\xi^+_i}_{\eta^+_i}(\rho_\tau)
$ is precisely given by $\mathrm{Fix}^+(\rho_\tau(d\varphi_*\gamma_i^+))-\mathrm{Fix}^+(\rho_\tau(\gamma_i^+))$ and hence
\begin{align}
\mathrm{mod}_q(d\varphi)
=\lim_{\tau\to\infty}\sum_{[\{\alpha,\beta;\epsilon\}]\in\vec{\triangle}^{\xi^+_i}_{\eta^+_i}}
\left(e^{\frac{1}{2}(\ell_{\alpha}(\rho_\tau)+\ell_{\beta}(\rho_\tau))}+(-1)^{\alpha\cdot\beta}\right)^{-1}.\label{eq:tempseries}
\end{align}

On the other hand, we know by construction that $\eta^-_i:=d\varphi_*\xi_i^-$ comes before $\xi^-_i$ on $(\lambda^+_i,\lambda^-_{i+1})$ and so:
\begin{align*}
w^{\eta^-_i}_{\xi^-_i}(\rho_\tau)
=\sum_{[\{\alpha,\beta;\epsilon\}]\in\vec{\triangle}^{\eta^-_i}_{\xi^-_i}}
\left(e^{\frac{1}{2}(\ell_{\alpha}(\rho_\tau)+\ell_{\beta}(\rho_\tau))}+(-1)^{\alpha\cdot\beta}\right)^{-1}.
\end{align*}
This time, the width $w^{\eta^-_i}_{\xi^-_i}(\rho_\tau)$ is equal to $\mathrm{Fix}^+(\rho_\tau(\gamma^-_i))-\mathrm{Fix}^+(\rho_\tau(\varphi_*\gamma^-_i))$, and we instead obtain:

\begin{align}
\mathrm{mod}_q(d\varphi)
=-\lim_{\tau\to\infty}\sum_{[\{\alpha,\beta;\epsilon\}]\in\vec{\triangle}^{\eta^-_i}_{\xi^-_i}}
\left(e^{\frac{1}{2}(\ell_{\alpha}(\rho_\tau)+\ell_{\beta}(\rho_\tau))}+(-1)^{\alpha\cdot\beta}\right)^{-1}.\label{eq:tempseriesneg}
\end{align}

We now turn to the summation index sets $\vec{\triangle}^{\xi^+_1}_{\eta^+_1},\ldots,\vec{\triangle}^{\xi^+_t}_{\eta^+_t}$ and $\vec{\triangle}^{\eta^-_1}_{\xi^-_1},\ldots,\vec{\triangle}^{\eta^-_t}_{\xi^-_t}$. Since $\lambda^+_i$ are attractive fixed points of the action of $d\varphi$ on $\mathbb{S}^1_q\equiv\mathbb{S}^1_p$ and $\lambda^-_i$ are the repelling fixed points, the interval $[\xi^+_i,\eta^+_i)$ is a fundamental domain for the action of $d\varphi$ on $(\lambda^-_i,\lambda^+_i)$. This in turn means that $d\varphi_*$ induces a bijection between $\vec{\triangle}^{\xi^+_i}_{\eta^+_i}$ and
\[
\left((\lambda^-_i,\lambda^+_i)\cap\vec{\triangle}\right)/d\varphi_*(\zeta)\sim\zeta,
\]
where $\vec{\triangle}\subset\mathbb{S}^1_p$ is regarded as a subset of $\mathbb{S}^1_q$ when it comes to the $d\varphi_*$ quotient (see Note~\ref{note:psourcebijection}). Likewise, we get a bijection between $\vec{\triangle}^{\eta^-_i}_{\xi^-_i}$ and $\left((\lambda^+_i,\lambda^-_{i+1})\cap\vec{\triangle}\right)/d\varphi_*$ and hence the following bijection:
\begin{align}
\vec{\triangle}^{\xi^+_1}_{\eta^+_1}\cup\ldots\cup\vec{\triangle}^{\xi^+_t}_{\eta^+_t}
\cup
\vec{\triangle}^{\eta^-_1}_{\xi^-_1}\cup\ldots\cup\vec{\triangle}^{\eta^-_t}_{\xi^-_t}
\equiv\left(\vec{\triangle}-\{\lambda^\pm_i\}\right)/d\varphi_*
\equiv\vec{\triangle}/d\varphi_*.\label{eq:quotientbijection}
\end{align}
The latter equivalence in \eqref{eq:quotientbijection} utilizes the fact that the stable and unstable leaves $\lambda^\pm_i$ cannot have both ends up $q$. This can be demonstrated by contradiction: the fattened pair of pants or M\"obius band of a stable or an unstable leaf $\lambda$ must be (topologically) fixed under the homeomorphic action of $d\varphi$ (see Note~\ref{note:equivariance}), this in turn means that $d\varphi$ preserves the homotopy class of one of the simple closed geodesic boundaries of the fattening of $\lambda$. This is impossible for a pseudo-Anosov map $d\varphi$ according to the classification of surface homeomorphisms.\medskip

By Note~\ref{note:equivariance} and Note~\ref{note:psourcebijection}, we know that $\vec{\triangle}/d\varphi_*$ naturally identifies with $\mathcal{S}_{(d\varphi,\iota)}\times\{\pm\}$, where $\pm$ are arbitary assignments of orientation. Thus, by summing \eqref{eq:tempseries} and \eqref{eq:tempseriesneg} over $i$, replacing the indices and invoking Proposition~7.6 of\cite{amsgroups} to ensure term-by-term convergence as $\rho_\tau$ tends to $\phi$, we obtain:
\begin{align*}
\sum_{\{\alpha,\beta;\epsilon\}\in\mathcal{S}_{(d\varphi,\iota)}\times\{\pm\}}
\left(e^{\frac{1}{2}(\ell_{\alpha}(\phi)+\ell_{\beta}(\phi))}+(-1)^{\alpha\cdot\beta}\right)^{-1}
=t\;\mathrm{mod}_q(d\varphi)-t\;\mathrm{mod}_q(d\varphi)=0.
\end{align*}
Since the actual summands are independant of $\epsilon\in\{\pm\}$, we may halve the above expression and replace the index set by $\mathcal{S}_{(d\varphi,\iota)}$. Note that this suffices to prove Theorem~\ref{thm:zero} when $d\varphi$ has simple signature.\medskip

Instead of summing over $\mathcal{S}_{(d\varphi,\iota)}\times\{\pm\}$, we may instead sum only over $\vec{\triangle}^{\xi^+_1}_{\eta^+_1}\cup\ldots\cup\vec{\triangle}^{\xi^+_t}_{\eta^+_t}$. This is equivalent to summing over the collection of all oriented ideal geodesics $\zeta\in\vec{\triangle}/d\varphi_*$ which shoot out from $p$ within some interval $\left(\vec{\triangle}\cap\bigcup_{i=1}^t(\lambda^-_i,\lambda^+_i)\right)/d\varphi_*$. This is tantamount to summing over $\triangle^+_q/d\varphi_*$ (and hence $\mathcal{S}_{(d\varphi,\iota)}^+$) twice and $\triangle^0_q$ (and hence $\mathcal{S}_{(d\varphi,\iota)}^0$) once, and yields
\begin{align*}
t\cdot\mathrm{mod}_q(d\varphi)
=\left(2\sum_{\{\alpha,\beta\}\in\mathcal{S}^+_{(d\varphi,\iota)}}+\sum_{\{\alpha,\beta\}\in\mathcal{S}^0_{(d\varphi,\iota)}}\right)
\left(e^{\frac{1}{2}(\ell_{\alpha}(\phi)+\ell_{\beta}(\phi))}+(-1)^{\alpha\cdot\beta}\right)^{-1},
\end{align*}
which in turn gives us \eqref{eq:mappingtori} as desired. Equation~\eqref{eq:mappingtorineg} is either similarly derived by summing over $\vec{\triangle}^{\eta^-_1}_{\xi^-_1}\cup\ldots\cup\vec{\triangle}^{\eta^-_t}_{\xi^-_t}$ or by applying Theorem~\ref{thm:zero}.\medskip

Finally, since the marking generators $(m_p,l_p)$ for the cusp torus at $p$ are respectively sent to $\pm\left[\begin{smallmatrix}1&1\\0&1\end{smallmatrix}\right]$ and $\pm\left[\begin{smallmatrix}1& \mathrm{mod}_q(d\varphi)\\0&1\end{smallmatrix}\right]$. The marked modulus for the cusp $q$ lift of $T_{(d\varphi,\iota)}$ with respect to the marking generator set $(m_q,l_q)$ is $\mathrm{mod}_q(d\varphi)$. But by the since the two Euclidean tori are conformally equivalent, the marked modulus for $T_{(d\varphi,\iota)}$ with respect to the marking $(m_p,l_p)$ is $\mathrm{mod}_p(d\varphi,\iota)=\mathrm{mod}_q(d\varphi)$ as desired.
\end{proof}

\subsection{Identities for the general signature case}
\label{sec:generalsignature}

We conclude this section by addressing what happens when the pseudo-Anosov map $d\varphi$ in a twisted-pA pair $(d\varphi,\iota)$ has general signature. Assume that $d\varphi: dN\rightarrow dN$ has signature $(s,t)$, then the map
\[
\hat{d\varphi}:=(d\varphi)^{\frac{t}{\mathrm{gcd}(s,t)}}:dN\rightarrow dN
\] 
is also pseudo-Anosov but of simple signature $(0,t)$. Then  $M_{\hat{d\varphi}}$ is an order $\frac{t}{\mathrm{gcd}(s,t)}$ finite cover of $M_{d\varphi}$ via a covering map $\Pi:M_{\hat{d\varphi}}\rightarrow M_{d\varphi}$. We denote the respective holonomy representations for these the two pseudo-Anosov mapping tori $M_{d\varphi}$ and $M_{\hat{d\varphi}}$ by $\phi$ and $\hat{\phi}$. Let us now prove Theorem~\ref{thm:zero}.

\begin{proof}[Proof of Theorem~\ref{thm:zero}]
We first observe that $(\hat{d\varphi},\iota)$ is in fact a twisted-pA pair, and its corresponding pseudo-Anosov mapping Klein bottle is a order $2$ quotient of the pseudo-Anosov mapping torus $M_{\hat{d\varphi}}$. Since $\hat{d\varphi}$ has signature, the proof of Theorem~\ref{thm:kleinbottle} tells us that
\begin{align}
\sum_{\{\hat{\alpha},\hat{\beta}\}\in\mathcal{S}_{(\hat{d\varphi},\iota)}}
\left(e^{\frac{1}{2}(\ell_{\hat{\alpha}}(\hat{\phi})+\ell_{\hat{\beta}}(\hat{\phi}))}+(-1)^{\hat{\alpha}\cdot\hat{\beta}}\right)^{-1}=0.\label{eq:finitequotient}
\end{align}
Since $M_{\hat{d\varphi}}$ is an order $\frac{t}{\mathrm{gcd}(s,t)}$ finite cover of $M_{d\varphi}$, for each pair of geodesics $\{\alpha,\beta\}$ corresponding to a pair of pants or a $1$-holed M\"obius band in $\mathcal{S}_{(d\varphi,\iota)}$, there are $\frac{t}{\mathrm{gcd}(s,t)}$ isometrically configured pairs of geodesics $\{\hat{\alpha},\hat{\beta}\}$ covering it in $\mathcal{S}_{(\hat{d\varphi},\iota)}$. This means that we may simply divide \eqref{eq:finitequotient} by $\frac{t}{\mathrm{gcd}(s,t)}$ and replace $\hat{\varphi},\hat{\alpha},\hat{\beta}$ and $\hat{\phi}$ with $\varphi,\alpha,\beta$ and $\phi$ to obtain the desired result.
\end{proof}

Finally, we turn to the geometry of the cusp $p$ torus $T_{(d\varphi,\iota)}$ on $K_{(d\varphi,\iota)}$, or equivalently the cusp $q$ torus on $M_{d\varphi}$. Unlike the simple signature case, the longitude $l_p$ (or equivalently, $l_q$) does not pair with the meridian $m_p$ (or equivalently, $m_q$) to give a $\mathbb{Z}$-basis for $\pi_1(T_{(d\varphi,\iota)})$. Hence, we instead choose a $\mathbb{Z}$-basis $(m_q,l)$ for the cusp $q$ torus of $M_{d\varphi}$ covering $T_{(d\varphi,\iota)}$, and let $(\hat{m}_q,\hat{l}_q)$ denote the (meridian and longitude) marking generators for the cusp $q$ torus of $M_{\hat{d\varphi}}$. Since $(m_q,l)$ is a $\mathbb{Z}$-basis, there is a unique integer $k_l$ so that 
\begin{align}
\Pi_*(\hat{l}_q)=\tfrac{t}{\mathrm{gcd}(s,t)}\cdot l+k_l\cdot m_q\label{eq:basis}
\end{align}
as homotopy classes in the fundamental group of the cusp $q$ torus of $M_{d\varphi}$. Since the cusp $q$ torus covering $T_{(d\varphi,\iota)}$ is identical to $T_{(d\varphi,\iota)}$, we abuse notation slightly and use $(m_p,l)$ to denote the homotopy group generators for $\pi_1(T_{(d\varphi,\iota)})$ corresponding to $(m_q,l)$.

\begin{cor}
The marked modulus $\mathrm{mod}_p(d\varphi,\iota;l)$ for the cusp $p$ torus $T_{(d\varphi,\iota)}$ of $K_{(d\varphi,\iota)}$, with respect to the basis $(m_p,l)$, is given by:
\begin{align}
\mathrm{mod}_p(d\varphi,\iota;l)
=\left(\frac{2}{t}\sum_{\{\alpha,\beta\}\in\mathcal{S}^+_{(d\varphi,\iota)}}
+\frac{1}{t}\sum_{\{\alpha,\beta\}\in\mathcal{S}^0_{(d\varphi,\iota)}}\right)
\left(e^{\frac{1}{2}(\ell_{\alpha}(\phi)+\ell_{\beta}(\phi))}+(-1)^{\alpha\cdot\beta}\right)^{-1}- \frac{k_l\cdot\mathrm{gcd}(s,t)}{t}\label{eq:mappingtorigeneral}.
\end{align}
\end{cor}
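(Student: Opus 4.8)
The plan is to deduce the corollary from Theorem~\ref{thm:mappingtori} (applied to $\hat\varphi$) by tracking how the cusp-$p$ modulus transforms under the degree-$d$ cover $\Pi:M_{\hat\varphi}\to M_\varphi$, where $d:=\tfrac{t}{\mathrm{gcd}(s,t)}$, together with the basis change \eqref{eq:basis}. Since $M_{\hat\varphi}$ and $M_\varphi$ are hyperbolic and $\Pi$ is a covering, the monodromy $\hat\phi$ is the restriction of $\phi$ along the inclusion $\Pi_*:\pi_1(M_{\hat\varphi})\hookrightarrow\pi_1(M_\varphi)$; in particular $\hat\phi|_{\pi_1(N)}=\phi|_{\pi_1(N)}$ (the fiber subgroup is common to both), and on cusp-torus subgroups $\Pi_*$ is the index-$d$ inclusion $\pi_1(T_{\hat\varphi})\hookrightarrow\pi_1(T_\varphi)$ with $\Pi_*(\hat m_p)=m_p$ and, after orienting $\hat l_p$ suitably, $\Pi_*(\hat l_p)=d\cdot l+K_l\cdot m_p$ as in \eqref{eq:basis}.

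First I would record the modulus transformation rule. Normalize $\phi$ so that $\phi(m_p)=\pm\left[\begin{smallmatrix}1&1\\0&1\end{smallmatrix}\right]$; because the cusp-$p$ subgroup of $\pi_1(M_\varphi)$ is abelian and already contains $\phi(m_p)$, all of $\phi(\pi_1 T_\varphi)$ consists of translations fixing $\infty$, so $\phi(l)=\pm\left[\begin{smallmatrix}1&\mathrm{mod}_p(\varphi;l)\\0&1\end{smallmatrix}\right]$ by definition of the marked modulus with respect to $(m_p,l)$. Since $\hat\phi(\hat m_p)=\phi(\Pi_*(\hat m_p))=\phi(m_p)$, the normalization of Theorem~\ref{thm:mappingtori} applied to $\hat\varphi$ is in force, so $\hat\phi(\hat l_p)=\pm\left[\begin{smallmatrix}1&\mathrm{mod}_p(\hat\varphi)\\0&1\end{smallmatrix}\right]$. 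Applying $\phi$ to \eqref{eq:basis} and reading off translation amounts then yields $\mathrm{mod}_p(\hat\varphi)=d\cdot\mathrm{mod}_p(\varphi;l)+K_l$, i.e.
\[
\mathrm{mod}_p(\varphi;l)=\tfrac{1}{d}\big(\mathrm{mod}_p(\hat\varphi)-K_l\big).
\]

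Next I would evaluate $\mathrm{mod}_p(\hat\varphi)$ via Theorem~\ref{thm:mappingtori} (valid as $\hat\varphi$ has simple signature $(0,t)$) and re-express it in terms of $\varphi$-data. As noted in the proof of Theorem~\ref{thm:zero}, $\hat\varphi=\varphi^d$ has the same invariant foliations as $\varphi$, hence the same singular leaves $\{\lambda^\pm_i\}$ at $p$, hence induces the same partition $\mathcal{S}(N)=\mathcal{S}^+(N)\sqcup\mathcal{S}^0(N)\sqcup\mathcal{S}^-(N)$; since no pair of pants or M\"obius band is $\varphi_*$-periodic (this would contradict the pseudo-Anosov hypothesis, as in the proof of Theorem~\ref{thm:mappingtori}), quotienting by $\hat\varphi_*=\varphi_*^d$ instead of $\varphi_*$ presents each of $\mathcal{S}^\pm_{\hat\varphi}$, $\mathcal{S}^0_{\hat\varphi}$ as a $d$-to-$1$ cover of $\mathcal{S}^\pm_\varphi$, $\mathcal{S}^0_\varphi$, with every lift $\{\hat\alpha,\hat\beta\}$ isometric over the common fiber $N$ to the pair $\{\alpha,\beta\}$ it covers, so $\ell_{\hat\alpha}(\hat\phi)=\ell_\alpha(\phi)$, $\ell_{\hat\beta}(\hat\phi)=\ell_\beta(\phi)$ and $\hat\alpha\cdot\hat\beta=\alpha\cdot\beta$. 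Hence each partial sum in Theorem~\ref{thm:mappingtori} for $\hat\varphi$ is $d$ times the corresponding sum for $\varphi$, giving
\[
\mathrm{mod}_p(\hat\varphi)=d\left(\frac{2}{t}\sum_{\{\alpha,\beta\}\in\mathcal{S}^+_\varphi}+\frac{1}{t}\sum_{\{\alpha,\beta\}\in\mathcal{S}^0_\varphi}\right)\left(e^{\frac{1}{2}(\ell_{\alpha}(\phi)+\ell_{\beta}(\phi))}+(-1)^{\alpha\cdot\beta}\right)^{-1}.
\]
Substituting into the previous display and using $K_l/d=K_l\,\mathrm{gcd}(s,t)/t$ reproduces \eqref{eq:mappingtorigeneral}.

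The main obstacle is not analytic but bookkeeping: one must nail down $\Pi_*$ on the cusp-torus subgroups --- in particular that $\Pi_*(\hat m_p)=m_p$ and that \eqref{eq:basis} holds with $l$-coefficient exactly $d$ for the chosen $\mathbb{Z}$-basis $(m_p,l)$ after fixing orientations --- and confirm that the $+/0/-$ partition of $\mathcal{S}(N)$ is genuinely unchanged on passing from $\varphi$ to its power $\hat\varphi$. Both are topological facts that follow from the invariance of the foliations under taking powers together with the description of $M_{\hat\varphi}$ as $M_\varphi$ ``unwound $d$ times'' in the mapping-torus direction.
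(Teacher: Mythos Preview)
Your proposal is correct and follows essentially the same approach as the paper: normalize so that the meridians act by unit translation, read off the relation $\mathrm{mod}_p(\hat{\varphi})=d\cdot\mathrm{mod}_p(\varphi;l)+K_l$ from \eqref{eq:basis}, apply Theorem~\ref{thm:mappingtori} to $\hat{\varphi}$, and then use the $d$-to-$1$ correspondence between $\mathcal{S}^{+/0/-}_{\hat{\varphi}}$ and $\mathcal{S}^{+/0/-}_{\varphi}$ (what the paper calls ``the same index replacement trick as used in the proof of Theorem~\ref{thm:zero}'') to rewrite the sums. Your write-up is in fact more explicit than the paper's about why the $+/0/-$ partition is unchanged under passing to $\hat{\varphi}$ and why the summands match term-by-term, but the underlying argument is identical.
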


\begin{proof}
For the remainder of this proof, we shall simply work with $\mathrm{mod}_q(d\varphi;l)$ instead of $\mathrm{mod}_p(d\varphi,\iota;l)$, the former being the modulus of the correspondingly marked cusp $q$ torus covering $T_{(d\varphi,\iota)}$. Thanks to our normalization condition that 
\[
\phi(m_q)=\hat{\phi}(\hat{m}_q)=\pm\left[\begin{smallmatrix}1&1\\0&1\end{smallmatrix}\right],
\]
we know that $\phi(l)$ and $\hat{\phi}(\hat{l}_q)$ respectively act on $\mathbb{C}=\partial_\infty\mathbb{H}^3-\{\infty\}$ as translation by $\mathrm{mod}_q(d\varphi;l)$ and $\mathrm{mod}_q(\hat{d\varphi})$. Coupling this with \eqref{eq:basis}, we obtain that:
\begin{align}
\mathrm{mod}_q(\hat{d\varphi})=\tfrac{t}{\mathrm{gcd}(s,t)}\cdot \mathrm{mod}_q(d\varphi;l)+k_l.\label{eq:rejig}
\end{align}
Rearranging equation~\eqref{eq:rejig} to make $\mathrm{mod}_q(d\varphi;l)$ the subject, invoking Theorem~\ref{thm:kleinbottle} to replace $\mathrm{mod}_q(\hat{d\varphi})$ and employing the same index replacement trick as used in the proof of Theorem~\ref{thm:zero} then yields \eqref{eq:mappingtorigeneral}.
\end{proof}

\section{Identities for $\mathrm{Aut}(\mathbb{H}^3)=O^+(1,3)$-representations}

The orthochronous Lorentz group $O^+(1,3)$ is the disjoint double cover of $SO^+(1,3)$ and $P\cdot SO^+(1,3)$, where $P$ is the diagonal matrix $\mathrm{diag}(1,-1,-1,-1)$ with determinant $-1$. This obviously enables us to endow $O^+(1,3)$ with a complex structure, but there is a choice, because one can assign to each component either the complex structure of $SO^+(1,3)=\mathrm{PSL}(2,\mathbb{C})$ or its complex conjugate. We choose to endow $O^+(1,3)$ with the complex structure such that it acts complex analytically on $\hat{\mathbb{C}}=\partial_\infty\mathbb{H}^3$. This means that the $SO^+(1,3)$ component is simply identified with $\mathrm{PSL}(2,\mathbb{C})$ and inherits the ``standard'' complex structure, whereas the $P\cdot SO^+(1,3)$ component is endowed with the conjugate complex structure because $P$ acts anti-holomorphically on $\mathbb{S}^2=\hat{\mathbb{C}}$ as the antipodal map.

 is a complex manifold, this means that $O^+(1,3)$ is also a complex manifold, and this in turn allows the transflected-quasifuchisan space $\mathcal{TQF}(S)$ to inherit a complex structure.

\subsection{Identity for transflected-quasifuchsian representations}

\subsection{Identity for pseudo-Anosov mapping tori}

\section*{Funding}

This work was supported by the China Postdoctoral Science Foundation [grant numbers 2016M591154, 2017T100058]; the Australian Mathematical Society's Lift-Off Fellowship; and the Australian Academy of Science's AK Head Mathematical Scientists Travelling Fellowship.

\section*{Acknowledgements}

 We owe Kenichi Ohshika an enormous debt of gratitude for patiently sharing his expertise in hyperbolic $3$-manifold theory, without whose help this paper would have been impossible. We would like to thank Hideki Miyachi and Makoto Sakuma for teaching us the ideas of their proof of their extension of the classical Fuchsian McShane identity to the quasifuchsian context.; Greg McShane, Paul Norbury, Athanase Papadopoulos and Ser Peow Tan for helpful discussions and/or feedback. We are also very grateful to the anonymous referees for encouraging words as well as the careful reading of an earlier draft of this paper. Finally, we would also like to thank the IMRN editors and staff for their kind patience.

\appendix

\section{McShane identity for nonorientable cusped hyperbolic surfaces}
\label{sec:appendix}

We now give the derivation for the cuspidal form of Norbury's nonorientable identity from Theorem~2 in\cite{norbury}:

\begin{thm*}[McShane identity for nonorientable surfaces with borders]
Consider a nonorientable hyperbolic surface $N$ with geodesic borders $\beta_1,\ldots,\beta_n$. For
\begin{align}
R(x,y,z)=x-\ln\frac{\cosh\tfrac{y}{2}+\cosh\tfrac{x+z}{2}}{\cosh\tfrac{y}{2}+\cosh\tfrac{x-z}{2}}
\end{align}
and
\begin{align}
D(x,y,z)=R(z,y,z)+R(x,z,y)-x,\; E(x,y,z)=R(x,2z,y)-\tfrac{x}{2}\label{eq:summandsdef}
\end{align}
on a hyperbolic surface with Euler characteristic $\neq1$ the following identity holds:
\begin{align}
\sum_{\alpha,\beta} D(L_1,\ell_{\alpha},\ell_{\beta})
+\sum_{j=2}^n\sum_\gamma R(L_1,L_j,\ell_\gamma)
+\sum_{\mu,\nu}E(L_1,\ell_\nu,\ell_\mu)
=L_1\label{eq:borderedidentity}
\end{align}
where the sums are over simple closed geodesics. The first sum is over pairs of $2$-sided geodesics $\alpha$ and $\beta$ that bound a pair of pants with $\beta_1$, the second sum is over boundary components $\beta_j$, $j=2,\ldots, n$ and $2$-sided geodesics $\gamma$ that bound a pair of pants with $\beta_1$ and $\beta_j$, and the third sum is over $1$-sided geodesics $\mu$ and $2$-sided geodesics $\nu$ that, with $\beta_1$ bound a M\"obius band minus a disk containing $\mu$.
\end{thm*}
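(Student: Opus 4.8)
The plan is to establish \eqref{eq:borderedidentity} by the \emph{boundary gap-measure} argument of McShane, in the refined form used by Mirzakhani, adapted to accommodate $1$-sided curves. First I would parametrise the distinguished border $\beta_1$ by arclength and, for each $x\in\beta_1$, launch the geodesic ray $r_x$ entering $N$ orthogonally at $x$. Exactly one of three mutually exclusive things happens to $r_x$: it stays simple and embedded in $N$ forever (hence spirals onto a minimal geodesic lamination), it meets $\partial N$, or it self-intersects. The set $Z\subset\beta_1$ of foot-points of the first type has measure zero --- this is the Birman--Series sparsity theorem \cite{birmanseries}, whose argument is local and orientation-insensitive and so applies verbatim to non-orientable hyperbolic surfaces. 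Hence $L_1=\mathrm{length}(\beta_1)$ equals the total length of the open set $\beta_1\setminus Z$, and the content of the theorem is the identification of the connected components --- the \emph{gaps} --- of $\beta_1\setminus Z$, together with a closed formula for the length of each.

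Next I would classify the gaps. Declare $x,x'\in\beta_1\setminus Z$ equivalent when $r_x$ and $r_{x'}$ have combinatorially the same ``first obstruction'' (they first return to the same boundary arc, or first self-intersect in the same pattern); the equivalence classes are exactly the gaps. Fattening the initial segment of $r_x$ up to just past its first obstruction produces an embedded geodesically bordered subsurface $X_x\subset N$ with $\beta_1\subset\partial X_x$ of minimal complexity, and running the same case analysis as in the cusp-based Theorem~\ref{thm:simplecharacterization} and Lemma~\ref{thm:fourteen} (with a border in place of the cusp $p$) leaves only three possibilities: $X_x$ is a pair of pants with $\beta_1$ and two interior simple closed geodesics $\gamma_1,\gamma_2$ as cuffs; $X_x$ is a pair of pants with cuffs $\beta_1$, another border $\beta_j$ ($j\ge2$), and an interior geodesic $\gamma$; or $X_x$ is a once-holed M\"obius band (a M\"obius strip with a disk removed) with $1$-sided core $\mu$ and $2$-sided non-$\beta_1$ boundary $\nu$. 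These three families are indexed in the statement by the three sums, so it remains to compute the length of a gap of each type.

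The gap-length computation is hyperbolic trigonometry in $\mathbb{H}^2$. For a gap of pair-of-pants type, lift $\beta_1$ to a geodesic $\widetilde\beta_1$ and the cuffs to their appropriate translates (disjoint from, or crossing, $\widetilde\beta_1$); the two boundary rays of the gap are the perpendiculars from $\widetilde\beta_1$ to the two bounding simple geodesics of the gap, and evaluating the distance between their feet by the right-angled hexagon and pentagon relations (Theorem~2.2.2 of \cite{buser}) returns the $D$- or $R$-summand as a function of $L_1$ and the relevant lengths. For the M\"obius gap I would cut $X_x$ along its $1$-sided core $\mu$ to obtain a pair of pants whose cuffs are $\beta_1$, $\nu$, and the orientation-double $\widehat\mu$ of $\mu$, of length $2\ell_\mu$; the gap on $\beta_1$ inside $X_x$ is the image of a pair-of-pants gap for this cut surface, but the $\mathbb{Z}_2$ reflection symmetry of the M\"obius band (as in Lemma~\ref{thm:fourteen}(1)) is responsible both for the doubling $\ell_\mu\mapsto2\ell_\mu$ in the argument of $R$ and for the $-\tfrac{L_1}{2}$ correction, so that the pair-of-pants value $R(L_1,2\ell_\mu,\ell_\nu)$ becomes $E(L_1,\ell_\nu,\ell_\mu)=R(L_1,2\ell_\mu,\ell_\nu)-\tfrac{L_1}{2}$. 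Summing the gap lengths over all embedded pairs of pants and once-holed M\"obius bands meeting $\beta_1$ then exhausts $\beta_1$ up to the measure-zero set $Z$ and yields \eqref{eq:borderedidentity}.

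The hard part will be the M\"obius bookkeeping: verifying that each embedded once-holed M\"obius band with $\beta_1$ on its boundary contributes one and only one gap to $\beta_1$, and that its length is the stated $E$ rather than a full $R$-value or a sum of two of them --- this is the bordered analogue of the ``fourteen simple geodesics'' phenomenon of Lemma~\ref{thm:fourteen}, and it is the step where the orientation-reversing combinatorics genuinely differ from the orientable case. A secondary point requiring care is the measure-zero claim for $Z$ in the bordered non-orientable category: it follows from Birman--Series, but one should confirm that rays perpendicular to $\beta_1$ which spiral onto a lamination form a set of the type controlled by that theorem. Once these two points are settled, the identity follows.
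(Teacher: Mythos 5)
Your overall scheme --- decompose $\beta_1$ into gaps cut out by the orthogonally launched geodesic rays, kill the exceptional set by Birman--Series \cite{birmanseries}, classify the gaps by the embedded subsurface they determine, and evaluate each gap length by right-angled hexagon/pentagon trigonometry --- is exactly the McShane--Mirzakhani strategy that Norbury follows in \cite{norbury}. This paper only quotes his theorem and does not reprove it (the appendix merely derives its cuspidal limit), so there is no internal proof to compare against; your architecture is the right one, and your first two gap types are handled correctly.

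The genuine gap is in the step you yourself flag as ``the hard part,'' and your proposed resolution of it is wrong. You assert that each embedded once-holed M\"obius band $M$ (with boundary $\beta_1\cup\nu$) contributes ``one and only one gap'' of length a single $E(L_1,\ell_\nu,\ell_\mu)$, explicitly rejecting the possibility that it is ``a sum of two of them.'' In fact $M$ contains \emph{two} interior $1$-sided simple closed geodesics $\mu$ and $\mu'$; the third sum in \eqref{eq:borderedidentity} is indexed by pairs $(\mu,\nu)$, so $M$ is responsible for two summands, and the total measure of the portion of $\beta_1$ that $M$ accounts for is $E(L_1,\ell_\nu,\ell_\mu)+E(L_1,\ell_\nu,\ell_{\mu'})$. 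Only this combined quantity has a clean geometric meaning: cutting $M$ along its $1$-sided orthogeodesic based on $\beta_1$ and regluing produces a pair of pants with cuffs $\nu,\beta_1^A,\beta_1^B$ (with $L_1^A+L_1^B=L_1$), under which the M\"obius gaps become two $R$-type gaps of total measure $R(L_1^A,L_1^B,\ell_\nu)+R(L_1^B,L_1^A,\ell_\nu)$ --- this is the content of Figure~\ref{fig:doublemobius}. A single $E(L_1,\ell_\nu,\ell_\mu)$ is not the measure of any natural gap, and your $\mathbb{Z}_2$-symmetry heuristic for the $-\tfrac{L_1}{2}$ correction does not survive this count.

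The miscount propagates into your first sum. Using \eqref{eq:summandsdef} one finds
\[
E(L_1,\ell_\nu,\ell_\mu)+E(L_1,\ell_\nu,\ell_{\mu'})
= D(L_1,\ell_\nu,2\ell_\mu)+D(L_1,\ell_\nu,2\ell_{\mu'})
+\bigl(L_1-R(L_1,\ell_\nu,2\ell_\mu)-R(L_1,\ell_\nu,2\ell_{\mu'})\bigr),
\]
so the $E$-terms already contain the $D$-gaps of the two pairs of pants $P_\mu,P_{\mu'}\subset M$ obtained by cutting $M$ along $\mu$ or $\mu'$ (their third cuffs are the orientation double covers of $\mu$, $\mu'$, of lengths $2\ell_\mu$, $2\ell_{\mu'}$). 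Hence the first sum in \eqref{eq:borderedidentity} must be read as ranging only over pairs of pants \emph{not} contained in an embedded once-holed M\"obius band; your classification by minimal first-obstruction subsurface would assign the $P_\mu$- and $P_{\mu'}$-gaps to the first sum and then count them again in the M\"obius contribution. The correct bordered analogue of Lemma~\ref{thm:fourteen} is that the simple rays from $\beta_1$ remaining in $M$ cut out the $P_\mu$-gaps, the $P_{\mu'}$-gaps, and four further intervals of rays that cross $\mu$ or $\mu'$ before self-intersecting; it is the union of all of these whose measure is $E+E'$. Until this bookkeeping is done, the identity you would prove differs from \eqref{eq:borderedidentity}.
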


As we deform the hyperbolic structure on $N$ so as to approach that of a cusped hyperbolic surface, the lengths of the boundaries $\beta_1,\beta_n$ all tend toward $0$. To obtain a McShane-type identity for a cusped hyperbolic surface, it suffices to divide both sides of \eqref{eq:borderedidentity} by $L_1$ and take the limit as $L_1$ goes to $0$. The are two standard approaches to showing that the resulting term-by-term limit convergences correctly to an identity (as opposed to an inequality with $\leq1$ on the right hand side). The first is to use hyperbolic geometry directly to compute the cuspidal identity and then to compare with the term-by-term limit. The second is to show that \eqref{eq:borderedidentity} divided by $L_1$ is a uniformly convergent series along the path (on the character variety) deforming the hyperbolic structure on $N$ to our desired cuspidal hyperbolic structure. We take this second route, and begin by verifying that Norbury's summands take the desired form in the $L_1\to0$ limit.\medskip

 We first observe the following limit:
\begin{align}
\lim_{L_1\to0}\frac{R(L_1,y,z)}{L_1}=\frac{\cosh\frac{y}{2}+e^{-\frac{z}{2}}}{\cosh\frac{y}{2}+\cosh\frac{z}{2}},
\end{align}
which in turn gives us the following:
\begin{align}
\lim_{L_1,L_j\to0}\tfrac{1}{L_1}R(L_1,L_j,z)&=2(e^{\frac{1}{2}(0+z)}+1)^{-1},\text{ and }\\
\lim_{L_1\to0}\tfrac{1}{L_1}D(L_1,y,z)&=\tfrac{1}{L_1}\left(R(L_1,y,z)+R(L_1,z,y)-L_1\right)\notag\\
&=2(e^{\frac{1}{2}(y+z)}+1)^{-1}.
\end{align}
The above calculations are standard and omitted. Now, since geodesic length functions are continuous over the character variety $\mathrm{Rep}(N)$ of all Fuchsian characters (with with parabolic or hyperbolic boundary holonomy) of $\pi_1(N)$, the existence of the above limits tells us that the functions 
\[
\hat{R}(L_1,L_j,\ell_\gamma):=\tfrac{1}{L_1}R(L_1,L_j,\ell_\gamma)\text{ and }
\hat{D}(L_1,\ell_\alpha,\ell_\beta):=\tfrac{1}{L_1}D(L_1,\ell_\alpha,\ell_\beta)
\]
extend uniquely to continous functions on all of $\mathrm{Rep}(N)$. We employ the same notation to denote their extensions.

\begin{note}
Our notation differs a little from Norbury's here for $\hat{R}(x,y,z)$. To clarify, our $\hat{R}(x,y,z)$ is equal to Norbury's $\hat{D}(x,y,z)+\hat{R}(x,z,y)$.
\end{note}

Already we are beginning to see that summands in the first two series of Norbury's identity are taking the form given in the cuspidal identity. For the final summand, we need to do a little rearranging first. For any embedded $1$-holed M\"obius band $M$ bounded by $\beta_1$ and $\nu$, there are precisely two interior $1$-sided geodesics $\mu$ and $\mu'$. Therefore, each summand in the third term of Norbury's identity arises in a pair $E(L_1,\ell_\nu,\ell_\mu)+E(L_1,\ell_\nu,\ell_{\mu'})$. 
Therefore, we consider the limit
\begin{align}
&\lim_{L_1\to0}\tfrac{1}{L_1}\left(E(L_1,\ell_\nu,\ell_\mu)+E(L_1,\ell_\nu,\ell_{\mu'})\right)\\
&=\lim_{L_1\to0}\tfrac{1}{L_1}(R(L_1,2\ell_\mu,\ell_\nu)+R(L_1,2\ell_{\mu'},\ell_\nu)-L_1)\\
&=1-\frac{\sinh\frac{\ell_\nu}{2}(2\cosh\frac{\ell_\nu}{2}+\cosh\ell_\mu+\cosh\ell_{\mu'})}{(\cosh\frac{\ell_\nu}{2}+\cosh\ell_\mu)(\cosh\frac{\ell_\nu}{2}+\cosh\ell_{\mu'})}.
\end{align}
Using the following trace relation (Equation~{(6)} of\cite{norbury})
\begin{align}
\cosh\tfrac{L_1}{2}+\cosh\tfrac{\ell_\nu}{2}=2\sinh\tfrac{\ell_\mu}{2}\;\sinh\tfrac{\ell_{\mu'}}{2}\text{, with }L_1\text{ set to }0,\label{eq:tracerel}
\end{align}
we can show that
\begin{align}
&(\cosh\tfrac{\ell_\nu}{2}+\cosh\ell_\mu)(\cosh\tfrac{\ell_\nu}{2}+\cosh\ell_|{\mu'})\\
&=(1+\cosh\tfrac{\ell_\nu}{2})(2\cosh\tfrac{\ell_\nu}{2}+\cosh\ell_\mu+\cosh\ell_{\mu'}).
\end{align}
This then tells us that
\begin{align}
\lim_{L_1\to0}\tfrac{1}{L_1}(E(L_1,\ell_\nu,\ell_\mu)+E(L_1,\ell_\nu,\ell_{\mu'}))
=1-\frac{\sinh\frac{\ell_\nu}{2}}{\cosh\frac{\ell_\nu}{2}+1}=2(e^{\frac{1}{2}\ell_\nu}+1)^{-1}.
\end{align}
The fact that this expression should be independent of $\ell_\mu$ and $\ell_{\mu'}$ is, perhaps, somewhat surprising. However, there is a geometric argument for this term which involves cutting up the orientable double cover of $M$ (which is a hyperbolic sphere with two cusps and two geodesic borders of length $\ell_\nu$) along the two ``shortest" ideal geodesics joining its two cusps and regluing each of the two resulting connected components into a pair of pants with two cusps and one boundary of length $\ell_\nu$ (see Figure~\ref{fig:mobiustopants}, but replace geodesic boundaries $\beta_1,\beta_1^A$ and $\beta_1^B$ with cusps as appropriate). We leave this as an exercise for interested readers.\medskip

Equation~\eqref{eq:summandsdef} allows us to break up $E(L_1,\ell_\nu,\ell_\mu)+E(L_1\ell_\nu,\ell_{\mu'})$ even more finely as the following summands:
\begin{align}
E(L_1,\ell_\nu,\ell_\mu)+E(L_1\ell_\nu,\ell_{\mu'})
=& D(L_1,\ell_\nu,2\ell_\mu)+D(L_1,\ell_\nu,2\ell_{\mu'})\label{eq:topDs}\\
& +L_1-R(L_1,\ell_\nu,2\ell_\mu)-R(L_1,\ell_\nu,2\ell_{\mu'}).\label{eq:botRs}
\end{align}
We already know what the limit, as $L_1$ tend to $0$, of \eqref{eq:topDs} divided by $L_1$ is. Therefore, we only need to consider
\begin{align}
&\lim_{L_1\to0}\tfrac{1}{L_1}
\left(L_1-R(L_1,\ell_\nu,2\ell_\mu)-R(L_1,\ell_\nu,2\ell_{\mu'})\right)\\
&=1-\left(
\frac{\cosh\frac{\ell_\nu}{2}+e^{-\ell_\mu}}{\cosh\frac{\ell_\nu}{2}+\cosh\ell_\mu} 
+\frac{\cosh\frac{\ell_\nu}{2}+e^{-\ell_{\mu'}}}{\cosh\frac{\ell_\nu}{2}+\cosh\ell_{\mu'}}
\right).\label{eq:bigfatterm}
\end{align}
As before, the existence of this limit means that
\begin{align}
\hat{E}(L_1,\ell_\mu,\ell_{\mu'}):=\tfrac{1}{L_1}(L_1-R(L_1,\ell_\nu,2\ell_\mu)-R(L_1,\ell_\nu,2\ell_{\mu'}))
\end{align}
extends to a continuous function over all of $\mathrm{Rep}(N)$. We again invoke \eqref{eq:tracerel} to show that
\begin{align}
\cosh\tfrac{\ell_\nu}{2}+e^{-\ell_\mu}&=2\sinh(\ell_{\mu'})\left(-e^{-\frac{\ell_\mu}{2}}+\sinh\tfrac{\ell_{\mu'}}{2}\right),\text{ and }\\
\cosh\tfrac{\ell_\nu}{2}+\cosh\ell_\mu&=2\sinh(\ell_{\mu'})\left(\sinh\tfrac{\ell_\mu}{2}+\sinh\tfrac{\ell_{\mu'}}{2}\right).
\end{align}
Incorporating these two identities into \eqref{eq:bigfatterm}, we obtain:
\begin{align}
&\lim_{L_1\to0}\tfrac{1}{L_1}(L_1-R(L_1,\ell_\nu,2\ell_\mu)-R(L_1,\ell_\nu,2\ell_{\mu'}))\\
&=\frac{e^{-\frac{\ell_{\mu}}{2}}+e^{-\frac{\ell_{\mu'}}{2}}}{\sinh\frac{\ell_\mu}{2}+\sinh\frac{\ell_{\mu'}}{2}}=2(e^{\frac{1}{2}(\ell_\mu+\ell_{\mu'})}-1)^{-1}.
\end{align}
Therefore, we see that the limit of $\frac{1}{L_1}(E(L_1,\ell_\nu,\ell_\mu)+E(L_1,\ell_\nu,\ell_{\mu'}))$ takes the form of three distinct terms, and we get:
\begin{align}
(e^{\frac{1}{2}\ell_\nu}+1)^{-1}
=(e^{\frac{1}{2}(\ell_\nu+2\ell_\mu)}+1)^{-1}+(e^{\frac{1}{2}(\ell_\nu+2\ell_{\mu'})}+1)^{-1}+(e^{\frac{1}{2}(\ell_\mu+\ell_{\mu'})}-1)^{-1}.
\label{eq:triplesum}
\end{align}
There are precisely two pairs of pants embedded on $M$, and they may be obtained from $M$ by cutting along $\mu$ and $\mu'$. One of these pairs of pants has $\beta_1$, $\nu$ and the $2$-sided double-cover of $\mu$ as its boundary, and its corresponding gap term is $(e^{\frac{1}{2}(\ell_\nu+2\ell_\mu)}+1)^{-1}$. The other pair has $\beta_1$, $\nu$ and the $2$-sided double cover of $\mu'$ as its boundary, with corresponding gap term $(e^{\frac{1}{2}(\ell_\nu+2\ell_{\mu'})}+1)^{-1}$. The remaining third term is associated to the M\"obius band $M$. Putting all of this data together with our previous two expressions tells us that the term-by-term limiting identity as $L_1$ tends to $0$ is indeed the one we gave as Norbury's nonorientable cusped surface identity.\medskip

Note that \eqref{eq:triplesum} suggests an alternative statement of the cuspidal case identity:

\begin{thm*}[Alternative cuspidal identity]
Let $\mathcal{S}_1'(N)$ denote the set of $2$-sided geodesics $\gamma$ on $N$ which, along with cusp $p$, bound an embedded $1$-holed M\"obius band and let $\mathcal{S}_2'(N)$ denote the set of unordered pairs of $2$-sided geodesics $\{\alpha,\beta\}$ which, along with cup $p$, bound an embedded pair of pants, which does not lie on an embedded $1$-holed M\"obius band, on $N$. Then,
\begin{align}
\sum_{\{\gamma\}\in\mathcal{S}_1'(N)}\left(e^{\frac{1}{2}\ell_{\gamma}}+1\right)^{-1}
+\sum_{\{\alpha,\beta\}\in\mathcal{S}_2'(N)}\left(e^{\frac{1}{2}(\ell_{\alpha}+\ell_{\beta})}+1\right)^{-1}
=\frac{1}{2}.
\end{align}
\end{thm*}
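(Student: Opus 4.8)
The plan is to deduce this from Norbury's nonorientable cusped surface identity derived above (equivalently, the Fuchsian case of Theorem~\ref{thm:mcshane}) by an algebraic regrouping of summands governed by the trace relation \eqref{eq:triplesum}. Write the left-hand side of that identity as
\[
\sum_{M\in\mathcal{S}_1(N)}\bigl(e^{\frac12(\ell_{\mu}+\ell_{\mu'})}-1\bigr)^{-1}+\sum_{P\in\mathcal{S}_2(N)}\bigl(e^{\frac12(\ell_{\alpha}+\ell_{\beta})}+1\bigr)^{-1},
\]
where $M\in\mathcal{S}_1(N)$ is recorded by its interior one-sided geodesics $\{\mu,\mu'\}$ (so $\mu\cdot\mu'=1$) and $P\in\mathcal{S}_2(N)$ by its non-cusp boundary geodesics $\{\alpha,\beta\}$. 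By the Note following Figure~\ref{fig:mobpants}, each $M$ contains exactly the two sub-pants $P_\mu,P_{\mu'}\in\mathcal{S}_2(N)$ obtained by cutting $M$ along $\mu$ and $\mu'$; writing $\nu$ for the geodesic boundary of $M$, these sub-pants have boundary pairs $\{\nu,\mu^2\}$ and $\{\nu,(\mu')^2\}$, where $\mu^2$ and $(\mu')^2$ are the two-sided doubles of $\mu$ and $\mu'$ (of lengths $2\ell_\mu$ and $2\ell_{\mu'}$, and disjoint from $\nu$). Hence the combined contribution of $M$ together with its two sub-pants is
\[
\bigl(e^{\frac12(\ell_\mu+\ell_{\mu'})}-1\bigr)^{-1}+\bigl(e^{\frac12(\ell_\nu+2\ell_\mu)}+1\bigr)^{-1}+\bigl(e^{\frac12(\ell_\nu+2\ell_{\mu'})}+1\bigr)^{-1},
\]
which by \eqref{eq:triplesum} collapses to the single term $\bigl(e^{\frac12\ell_\nu}+1\bigr)^{-1}$.

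Next I would record the bookkeeping promoting this to a rearrangement of the whole (absolutely convergent) series. First, $M\mapsto\nu$, the geodesic boundary of $M$, is a bijection from $\mathcal{S}_1(N)$ onto $\mathcal{S}_1'(N)$; its inverse sends a two-sided geodesic bounding an embedded Möbius strip together with $p$ to that Möbius strip. Second, $M\mapsto\{P_\mu,P_{\mu'}\}$ realizes the collection of pants in $\mathcal{S}_2(N)$ that lie inside an embedded one-holed Möbius band as the disjoint union, over $M\in\mathcal{S}_1(N)$, of the two-element sets $\{P_\mu,P_{\mu'}\}$; equivalently, each such pants lies inside a \emph{unique} embedded one-holed Möbius band. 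Granting these two facts, the pants of $\mathcal{S}_2(N)$ not absorbed into any Möbius band are exactly the elements of $\mathcal{S}_2'(N)$, so regrouping the series — legitimate by the absolute convergence from Proposition~\ref{thm:holomorphicity} (or the uniform convergence argument recorded in this appendix) — yields
\[
\sum_{\gamma\in\mathcal{S}_1'(N)}\bigl(e^{\frac12\ell_\gamma}+1\bigr)^{-1}+\sum_{\{\alpha,\beta\}\in\mathcal{S}_2'(N)}\bigl(e^{\frac12(\ell_\alpha+\ell_\beta)}+1\bigr)^{-1}=\frac12.
\]

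The main obstacle is the uniqueness clause in the second bookkeeping fact. A priori a pants $P\in\mathcal{S}_2(N)$ could be a sub-pants of two distinct one-holed Möbius bands, namely when \emph{both} non-cusp boundaries of $P$ are two-sided doubles of one-sided curves that admit a Möbius-band neighbourhood on the far side; gluing both neighbourhoods onto $P$ caps off every boundary except the cusp, forcing $N$ to be the once-punctured Klein bottle. I would therefore establish the clean regrouping for all other $N$ and treat the once-punctured Klein bottle by a direct computation (its orientation double is a twice-punctured torus). The most transparent way to organize the general bookkeeping is via Proposition~\ref{thm:bijection} and Lemma~\ref{thm:fourteen}: a one-sided ideal arc $\lambda\in\triangle_1$ fattens to $M$, the two two-sided ideal arcs on $M$ supplied by Lemma~\ref{thm:fourteen} fatten to $P_\mu$ and $P_{\mu'}$, and the needed disjointness amounts to the statement that distinct one-holed Möbius bands share no two-sided ideal arc with both ends at $p$.
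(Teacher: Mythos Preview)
Your approach is exactly the paper's: immediately after deriving \eqref{eq:triplesum} and identifying the two sub-pants $P_\mu,P_{\mu'}\subset M$ for each $M\in\mathcal{S}_1(N)$, the paper simply remarks that this relation ``suggests an alternative statement of the cuspidal case identity'' and states the theorem without further argument---your regrouping is precisely this observation made explicit. Your careful bookkeeping on uniqueness (and the once-punctured Klein bottle exception it isolates) actually goes beyond what the paper records, which does not flag or treat that case.
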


\begin{note}
\label{note:probability}
It is possible to double the cuspidal version of Norbury's identity and give a probabilistic interpretation of the resulting series in the Fuchsian case. The summand 
\[
2\left(e^{\frac{1}{2}(\ell_{\alpha_2}+\ell_{\beta_2})}+1\right)^{-1},
\]
for the index $\{\alpha_2,\beta_2\}\in\mathcal{S}_2(N)$, is the probability that a geodesic launched from cusp $p$ will self-intersect before intersecting either $\alpha$ or $\beta$. The summand
\[
2\left(e^{\frac{1}{2}(\ell_{\alpha_1}+\ell_{\beta_1})}-1\right)^{-1},
\]
for the index $\{\alpha_1,\beta_1\}\in\mathcal{S}_1(N)$ corresponding to a $1$-holed M\"obius band $M$ containing $\alpha_1,\beta_1$ with boundary $\gamma$, is the probability that a geodesic launched from $p$ will first intersect both $\alpha_1$ and $\beta_1$, and then self-intersect before hitting $\gamma$. For the alternative formulation of the cuspidal case identity, the summand
\[
2\left(e^{\frac{1}{2}\ell_\gamma}+1\right)^{-1},
\]
for the index $\{\gamma\}\in\mathcal{S}_1'(N)$, is the probability that a geodesic launched from $p$ will self-intersect before hitting $\gamma$. Summands for $\mathcal{S}_2'(N)$ have already been discussed as $\mathcal{S}_2'(N)$ is a subset of $\mathcal{S}_2(N)$.
\end{note}

So far, in taking the term-by-term limit of Norbury's bordered surface identity gives us the following inequality:
\begin{align}
\sum_{\{\alpha_1,\beta_1\}\in\mathcal{S}_1(N)}\left(e^{\frac{1}{2}(\ell_{\alpha_1}+\ell_{\beta_1})}-1\right)^{-1}
+\sum_{\{\alpha_2,\beta_2\}\in\mathcal{S}_2(N)}\left(e^{\frac{1}{2}(\ell_{\alpha_2}+\ell_{\beta_2})}+1\right)^{-1}
\leq\frac{1}{2}.
\end{align}
To show that this is in fact an equality, we study the behavior of $\hat{D}(L_1,\ell_\alpha,\ell_\beta)$, $\hat{R}(L_1,L_j,\ell_\gamma)$ and $\hat{E}(L_1,\ell_\mu,\ell_{\mu'})$ as the hyperbolic structure on $N$ deforms to a cuspidal structure. A little algebraic manipulation suffices to show that:
\begin{align}
\hat{D}(x,y,z)
=\frac{2}{x}\left(1+\frac{2\sinh\frac{x}{2}}{e^{\frac{x}{2}}+e^{\frac{y+z}{2}}}\right)
\leq\frac{4\sinh\frac{x}{2}}{x(e^{\frac{x}{2}}+e^{\frac{y+z}{2}})}.
\end{align}
Therefore, when $L_1$ is sufficiently close to $0$, we have 
\[
\hat{D}(L_1,\ell_\alpha,\ell_\beta)<6e^{-\frac{1}{2}(\ell_\alpha+\ell_\beta)}.
\] 
For $\hat{R}(L_1,L_j,\ell_\gamma)$, we utilize an alternative expression (see, e.g.: Equation~(1.7) of\cite{twzcone}):
\begin{align}
\hat{R}(x,y,z)
&=2\tanh^{-1}\left(\frac{\sinh\frac{x}{2}\sinh\frac{y}{2}}{\cosh\frac{z}{2}+\cosh\frac{x+y}{2}}\right)+\hat{D}(x,y,z)\\
&=\frac{1}{x}\log\left(1+\frac{2\sinh\frac{x}{2}\sinh\frac{y}{2}}{\cosh\frac{z}{2}+\cosh\frac{x}{2}\cosh\frac{y}{2}}\right)+\hat{D}(x,y,z)\\
&\leq\frac{2\sinh\frac{x}{2}\sinh\frac{y}{2}}{\cosh\frac{z}{2}+\cosh\frac{x}{2}\cosh\frac{y}{2}}+\hat{D}(x,y,z).
\end{align}
Therefore, when $x$ and $y$ are sufficiently close to $0$, we have 
\[
\hat{R}(L_1,L_j,\ell_\gamma)<3e^{-\frac{1}{2}\ell_\gamma}+3e^{-\frac{1}{2}(L_j+\ell_\gamma)}<6e^{-\frac{1}{2}\ell_\gamma}.
\]
For $\hat{E}(L_1,\ell_\mu,\ell_{\mu'})$, we employ a small geometric argument. Firstly, we know by construction that $\hat{E}(L_1,\ell_\mu,\ell_{\mu'})<\tfrac{1}{L_1}(E(L_1,\ell_\nu,\ell_\mu)+E(L_1\ell_\nu,\ell_{\mu'}))$ and we bound this larger expression instead. On the $1$-holed M\"obius band $M$ bounded by $\beta_1$ and $\nu$, there is a unique simple $1$-sided orthogeodesic $\sigma$ with both endpoints based on $\beta_1$. Cutting $M$ along $\sigma$ results in an annulus, and we may reglue the two sides of this annulus along $\sigma$ in an orientation preserving way (see Figure~\ref{fig:mobiustopants}) so as to obtain a pair of pants with boundaries $\nu$, $\beta_1^A$ and $\beta_1^B$ of respective lengths $\ell_\nu$, $L_1^A$ and $L_1^B$ such that $L_1^A+L_1^B=L_1$.\medskip

\begin{figure}[h!]
\centering
\includegraphics[width=0.8\textwidth]{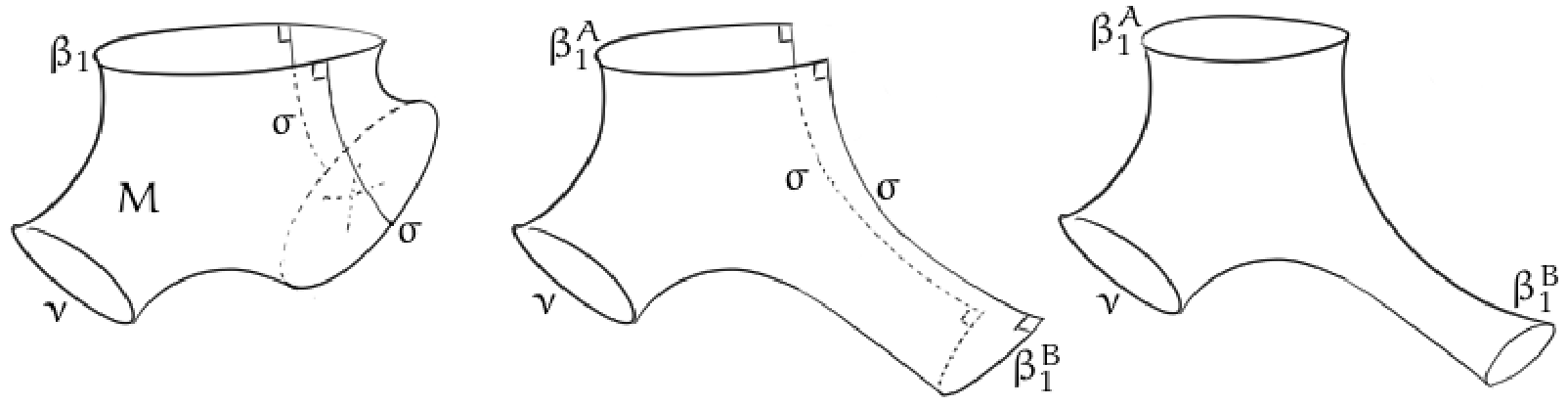}
\caption{A M\"obius band $M$ (left) cut along $\sigma$ (center) and reglued to form a pair of pants (right).}
\label{fig:mobiustopants}
\end{figure}

In particular, we can see from Figure~\ref{fig:doublemobius}, which lifts $M$ to its orientable double cover $dM$, that the gaps corresponding to $E(L_1,\ell_\nu,\ell_\mu)+E(L_1,\ell_\nu,\ell_{\mu'})$ actually equal the sum of the two gaps on $\beta_1^A$ and $\beta_1^B$ with total measure
\[
R(L_1^A,L_1^B,\ell_\nu)+R(L_1^B,L_1^A,\ell_\nu)
\]
because the cutting and regluing procedure does not affect the positions of the four (red) geodesics spiraling to $\nu$. Thus, we see that when $L_1$ (hence $L_1^A$ and $L_1^B$) is sufficiently close to $0$,
\begin{align}
\tfrac{1}{L_1}\left(E(L_1,\ell_\nu,\ell_\mu)+E(L_1,\ell_\nu,\ell_{\mu'})\right)
&=\tfrac{1}{L_1}\left(\hat{R}(L_1^A,L_1^B,\ell_\nu)L_1^A+\hat{R}(L_1^B,L_1^A,\ell_\nu)L_1^B\right)\notag\\
&<\tfrac{1}{L_1}(6e^{-\frac{1}{2}\ell_\nu}L_1^A+6e^{-\frac{1}{2}\ell_\nu}L_1^B)=6e^{-\frac{1}{2}\ell_\nu}.
\end{align}

\begin{figure}[h!]
\centering
\includegraphics[width=0.8\textwidth]{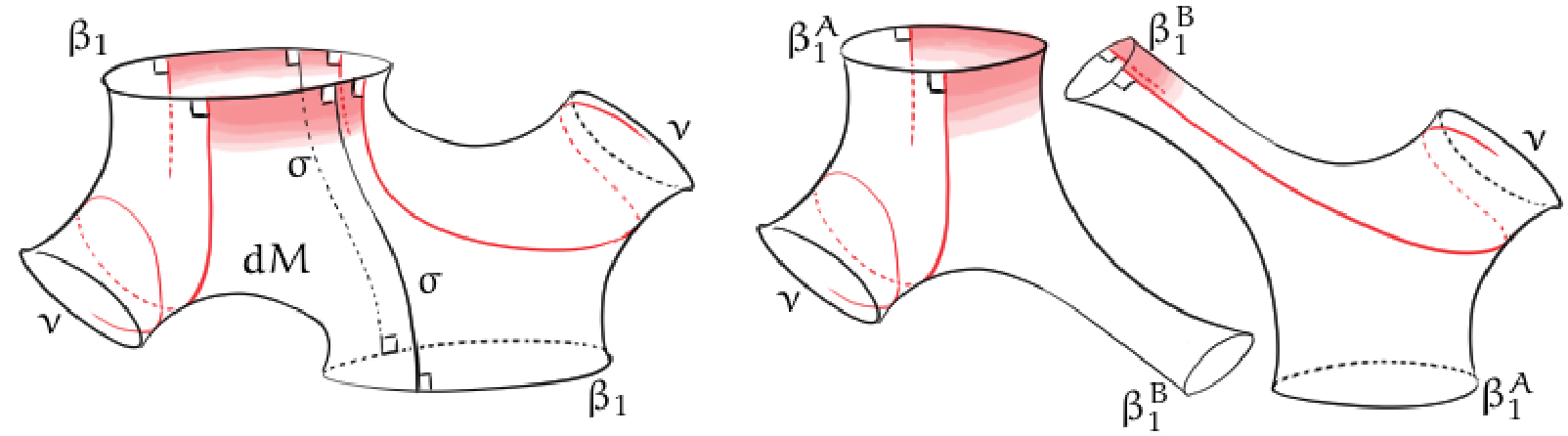}
\caption{The red shaded region on the left diagram has length $E(L_1,\ell_\nu,\ell_\mu)+E(L_1,\ell_\nu,\ell_{\mu'})$ along $\beta_1$; the two red shaded regions on the right half respectively have lengths $R(L_1^A,L_1^B,\ell_\nu)$ and $R(L_1^B,L_1^A,\ell_\nu)$.}
\label{fig:doublemobius}
\end{figure}

At this point, we may invoke the same argument as used in the proof of Proposition~\ref{thm:holomorphicity} to obtain a similar polynomial divided by exponential type expression as \eqref{eq:uniformbound} for the tail of Norbury's identity (upon appropriate rearrangement of the series). This ensures the uniform convergence of the bordered case identity as the hyperbolic structure on $N$ deforms to a cusped structure, thus allowing us to conclude that the term-by-term limit is in fact an equality.

\begin{note}
The above arguments obviously apply when the underlying surface is orientable, thus furnishing the nitty-gritty details for the proof of Mirzakhani's Corollary~4.3.
\end{note}

\begin{note}
Our uniform convergence arguments also apply when an interior simple closed geodesic deforms to a cusp. Therefore, starting with a McShane identity for a surface with greater topological complexity, we may take these limits to derive identities for surfaces with lower complexity simply by taking term-by-term limits. In particular, simple geodesics which intersect the shrinking geodesic(s) must tend to length $\infty$ and summands expressing their lengths therefore tend to $0$ and are excluded from the identity. This was previously noted in the special case when a pair of simple closed geodesics $\alpha,\beta$, which bound a pair of pants with $\beta_1$, deform to cusps (Example~2.2~(2) of\cite{amsbundles}). 
\end{note}


\begin{thebibliography}{0}

\bibitem{MR0124486}
Ahlfors, L. V. 
\textit{The complex analytic structure of the space of closed {R}iemann surfaces.}, 
Analytic functions, Princeton Univ. Press, Princeton, 
N.J., 1960, 45--66.

\bibitem{amsbundles}
Akiyoshi H., H. Miyachi, and M. Sakuma, 
\textit{A variation of {M}c{S}hane's identity for punctured surface bundles}, 
Perspectives of hyperbolic spaces II, 
RIMS Kyoto Univ., Kyoto, 2004, 44--58.

\bibitem{amsgroups}
Akiyoshi H., H. Miyachi, and M. Sakuma, 
\textit{Variations of {M}c{S}hane's identity for punctured surface groups}, 
Spaces of {K}leinian groups, 
London Math. Soc. Lecture Note Ser., vol. 329, 
Cambridge Univ. Press, Cambridge, 2006, 151--185. 

\bibitem{arnoux}
Arnoux P., and J.-C. Yoccoz, 
\textit{Construction de diff\'{e}omorphismes pseudo-Anosov}, 
C. R. Acad. Sci. Paris S\'{e}r. I Math., 
\textbf{292} (1981), no.~1, 75--78.

\bibitem{bersuniform}
Bers L. 
``Simultaneous uniformization'', 
\textit{Bull. Amer. Math. Soc.}
\textbf{66} (1960), 94--97. 

\bibitem{MR0111835}
Bers L. \textit{Spaces of {R}iemann surfaces as bounded domains}, Bull. Amer.
  Math. Soc. \textbf{66} (1960), 98--103. 

\bibitem{MR0130972}
Bers L. \textit{Correction to ``{S}paces of {R}iemann surfaces as bounded
  domains''}, Bull. Amer. Math. Soc. \textbf{67} (1961), 465--466. 

\bibitem{bersspaces}
Bers L. \textit{Spaces of kleinian groups}, Several Complex Variables I Maryland
  1970, Springer, 1970, pp.~9--34.

\bibitem{bowvar}
Bowditch B. H. \textit{A variation of McShane's identity for once-punctured torus bundles}, 
Topology \textbf{36} (1997), no.~2, 325--334. 

\bibitem{markofftriples}
Bowditch B. H. \textit{Markoff triples and quasi-{F}uchsian groups}, 
Proc.  London Math. Soc. \textbf{77} (1998), no.~3, 697--736. 

\bibitem{birmanseries}
Birman J. S., and C. Series, \textit{Geodesics with bounded intersection
  number on surfaces are sparsely distributed}, Topology \textbf{24} (1985),
  no.~2, 217--225. 

\bibitem{buser}
Buser P. \textit{Geometry and spectra of compact {R}iemann surfaces}, Modern
  Birkh\"auser Classics, Birkh\"auser Boston, Inc., Boston, MA, 1992. 

\bibitem{markoffquads}
Huang Y., and P. Norbury, \textit{Simple geodesics and {M}arkoff quads}, Geom.
  Dedicata \textbf{186} (2017), 113--148. 

\bibitem{hempel}
Hempel, J. \textit{$3$-manifolds, vol. 86 of Annals of Mathematics Studies}, 
Princeton Univ. Press, Princeton, N.J., 1976. 

\bibitem{hutanzhang}
Hu H., S. P. Tan, and Y. Zhang, \textit{Polynomial automorphisms of
  {$\mathbb{C}^n$} preserving the {M}arkoff--{H}urwitz polynomial}, Geom. Dedicata
 \textbf{192} (2018), 207--243. 

\bibitem{imayoshitaniguchi}
Imayoshi Y., and M. Taniguchi, \textit{An introduction to
  {T}eichm\"uller spaces}, Springer-Verlag, Tokyo, 1992. 

\bibitem{liechtistrenner}
Liechti, L. and B. Strenner, \textit{Minimal pseudo-Anosov stretch factors on nonorientable surfaces}, 
arXiv preprint arXiv:1806.00033, 2018.

\bibitem{loustau}
Loustau B. \textit{The complex symplectic geometry of the deformation space
  of complex projective structures}, Geom. Topol. \textbf{19} (2015), no.~3,
  1737--1775. 

\bibitem{mcmullen}
McMullen C. \textit{Riemann surfaces and the geometrization of 3-manifolds},
 Bull. Amer. Mathematical Society \textbf{27} (1992), no.~2, 207--216.

\bibitem{mcshane_thesis}
McShane G. \textit{A remarkable identity for lengths of curves}, Ph.D. thesis, 1991.

\bibitem{mcshane_allcusps}
McShane G. \textit{Simple geodesics and a series constant over {T}eichmuller
  space}, Invent. Math. \textbf{132} (1998), no.~3, 607--632. 

\bibitem{mirz_simp}
Mirzakhani M. \textit{Simple geodesics and {W}eil-{P}etersson volumes of
  moduli spaces of bordered {R}iemann surfaces}, Invent. Math. \textbf{167}
  (2007), no.~1, 179--222. 

\bibitem{malonipalesi}
Maloni S., and F. Palesi, \textit{On the character variety of the
  three-holed projective plane}, arXiv preprint:1509.08341, 2015.

\bibitem{marden}
Marden A. \textit{The geometry of finitely generated Kleinian groups},
Ann. Math. \textbf{99} (1974), 383--462.

\bibitem{norbury}
Norbury P. \textit{Lengths of geodesics on non-orientable hyperbolic
  surfaces}, Geom. Dedicata \textbf{134} (2008), 153--176. 

\bibitem{namazisouto}
Namazi H., and J. Souto, \textit{Non-realizability and ending laminations:
  proof of the density conjecture}, Acta Math. \textbf{209} (2012), no.~2,
 323--395. 

\bibitem{ohshikadensity}
Ohshika K. \textit{Realising end invariants by limits of minimally
  parabolic, geometrically finite groups}, Geom. Topol. \textbf{15} (2011),
  no.~2, 827--890. 

\bibitem{pennerpa}
Penner R.~C. \textit{A construction of pseudo-Anosov homeomorphisms}, 
Trans. Amer. Math. Soc. \textbf{310} (1988), no.~1, 179--197.

\bibitem{penner}
Penner R.~C. \textit{The decorated {T}eichm\"uller space of punctured surfaces},
  Comm. Math. Phys. \textbf{113} (1987), no.~2, 299--339. 

\bibitem{sullivan}
Sullivan D. \textit{On the ergodic theory at infinity of an arbitrary
  discrete group of hyperbolic motions}, Riemann surfaces and related topics,
  Proceedings of the 1978 Stony Brook Conference, State Univ. New York, Stony
  Brook, Princeton Univ. Press, 1981, pp.~465--496.

\bibitem{strenner}
Strenner, B. \textit{Lifts of pseudo-Anosov homeomorphisms of nonorientable surfaces have vanishing SAF invariant}, 
arXiv preprint:1604.05614, 2016.

\bibitem{szepietowski}
Szepietowski B. \textit{Crosscap slides and the level 2 mapping class
  group of a nonorientable surface}, Geometriae Dedicata \textbf{160} (2012),
  no.~1, 169--183.

\bibitem{twzcone}
Tan S.P., Y. L. Wong, and Y. Zhang, \textit{Generalizations of
  {M}c{S}hane's identity to hyperbolic cone-surfaces}, J. Differential Geom.
  \textbf{72} (2006), no.~1, 73--112. 

\bibitem{thurston}
Thurston, W. P., \textit{Hyperbolic structures on 3-manifolds, II: Surface groups and 3-manifolds which fiber over the circle}, arXiv:9801045, 1998.


\bibitem{uhlenbeck}
Uhlenbeck, K.~K. \textit{Closed minimal surfaces in hyperbolic 3-manifolds}, 
Seminar on minimal submanifolds. Vol. 103., Princeton Univ. Press, Princeton, N.J., 1983, pp.~147--168.

\bibitem{wang}
Wang, B. \textit{Foliations for quasi-Fuchsian 3-manifolds},
arXiv preprint arXiv:0809.4057, 2008.

\end{thebibliography}
\end{document}